\newtheorem{prop}[theorem]{Proposition}
\newtheorem{assumption}[theorem]{Assumption}
\newtheorem{coro}[theorem]{Corollary}
\newtheorem{theorem}{Theorem}[section]
\newtheorem{definition}[theorem]{Definition}
\newtheorem{prop}[theorem]{Proposition}
\newtheorem{lemma}[theorem]{Lemma}
\newtheorem{assumption}[theorem]{Assumption}
\newtheorem{remark}[theorem]{Remark}
\newtheorem{coro}[theorem]{Corollary}
\newtheorem{example}[theorem]{Example}
\newcommand{\R}{{\mathbb{R}}}
\newcommand{\N}{{\mathbb{N}}}
\newcommand{\diam}{\operatorname{diam}} 
\newcommand{\divergence}{\operatorname{div}} 
\title{Unsaturated subsurface flow with surface water and nonlinear in- and outflow conditions\thanks{This work was supported by the BMBF under contract numbers 03MOPAF1 and 03KOPAF4}}
\author{Heiko Berninger\footnote{Section de Math\'ematiques, Universit\'e de Gen\`eve, 2-4 rue du Li\`evre, CP 64,
1211 Gen\`eve 4, Switzerland,
Heiko.Berninger@unige.ch}, Mario Ohlberger\footnote{Institute for Computational and Applied Mathematics, University of Muenster,
Einsteinstr. 62, 48149 Muenster, Germany,
mario.ohlberger@uni-muenster.de}, Oliver Sander\footnote{Institut f\"ur Geometrie und Praktische Mathematik, RWTH Aachen University,
Templergraben~55, 52062 Aachen, Germany,
sander@igpm.rwth-aachen.de}, Kathrin Smetana\footnote{Institute for Computational and Applied Mathematics, University of Muenster,
Einsteinstr. 62, 48149 Muenster, Germany,
kathrin.smetana@uni-muenster.de}}
\begin{document}

\maketitle

\begin{abstract}
We analytically and numerically analyze groundwater flow in a homogeneous soil described by the Richards equation, coupled to surface water represented by a set of ordinary differential equations (ODE's) on parts of the domain boundary, and with nonlinear outflow conditions of Signorini's type. The coupling of the partial differential equation (PDE) and the ODE's is given by nonlinear Robin boundary conditions. This article provides two major new contributions regarding these infiltration conditions. First, an existence result for the continuous coupled problem is established with the help of a regularization technique. Second, we analyze and validate a solver-friendly discretization of the coupled problem based on an implicit--explicit time discretization and on finite elements in space. The discretized PDE leads to convex spatial minimization problems which can be solved efficiently by monotone multigrid. Numerical experiments are provided using the {\sc Dune} numerics framework. 
\end{abstract}
\vspace*{\baselineskip}
\noindent
{\bf Keywords:} saturated-unsaturated porous media flow, Kirchhoff transformation, convex
minimization, finite elements, monotone multigrid, nonlinear transmission
problem\\

\noindent
{\bf AMS Subject Classification:}  35K61, 65N30, 65N55, 76S05\\
\newpage
\section{Introduction}

This article is concerned with existence of solutions, and efficient numerical approximation of unsaturated groundwater flow 
with surface water and nonlinear in- and outflow conditions. To start with, let us introduce the considered model.
Let \mbox{$\Omega \subset \mathbb{R}^d$,} $d=1,2,3$ denote a bounded domain occupied by a homogeneous soil 
with boundary $\partial \Omega$ of class $C^{1}$ and outer normal ${\boldsymbol \nu}$. 
Let $\Sigma_\text{in}, \Sigma_\text{out}$ and $\Sigma_{N} \subset \partial \Omega$ denote three relatively open, 
pairwise disjoint $d-1$-dimensional $C^{1}$-manifolds, representing the infiltration, outflow and Neumann boundaries,
such that $\partial \Omega = \overline{\Sigma_\text{in}} \cup \overline{\Sigma_\text{out}} \cup \overline{\Sigma_\text{N}}$
 and  $\overline{\Sigma}_\text{in} \cap \overline{\Sigma}_\text{out} = \emptyset$.
The unsaturated groundwater flow is modelled in the time interval $[0,T]$ using Richards equation (see e.g. \cite{Bear88})
for the water saturation $s: \Omega \times [0,T] \rightarrow [0,1]$ and the water pressure $p: \Omega \times [0,T] \rightarrow \mathbb{R}$
\begin{equation}\label{richards}
 n \partial_{t} s - \divergence\left(k(s)\mu^{-1}( \nabla p + {\boldsymbol e} )\right) = f \quad \text{in $\Omega \times [0,T]$}. 
\end{equation}
\begin{figure}
\center
\includegraphics[width=0.4\textwidth]{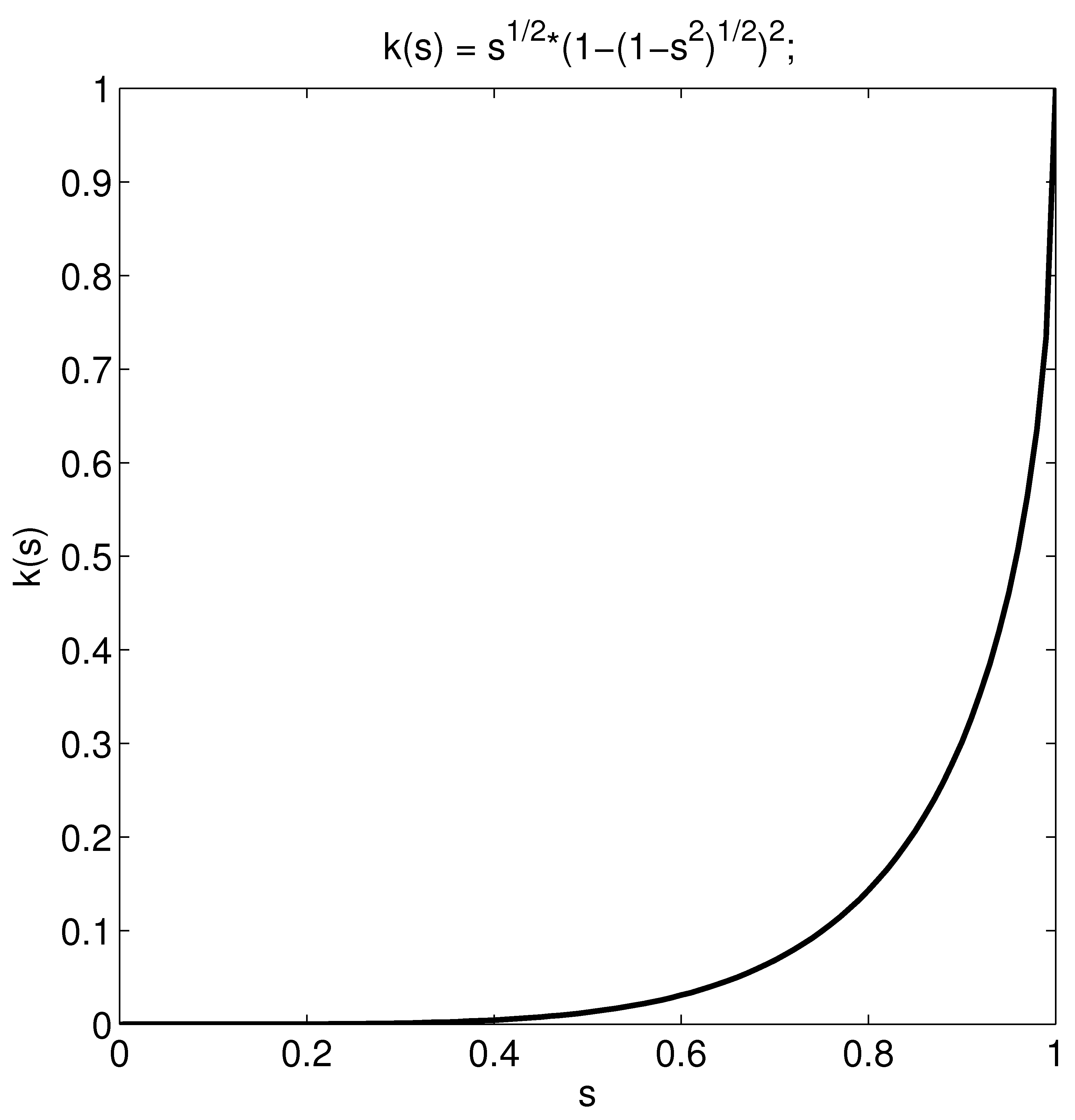} \quad \includegraphics[width=0.425\textwidth]{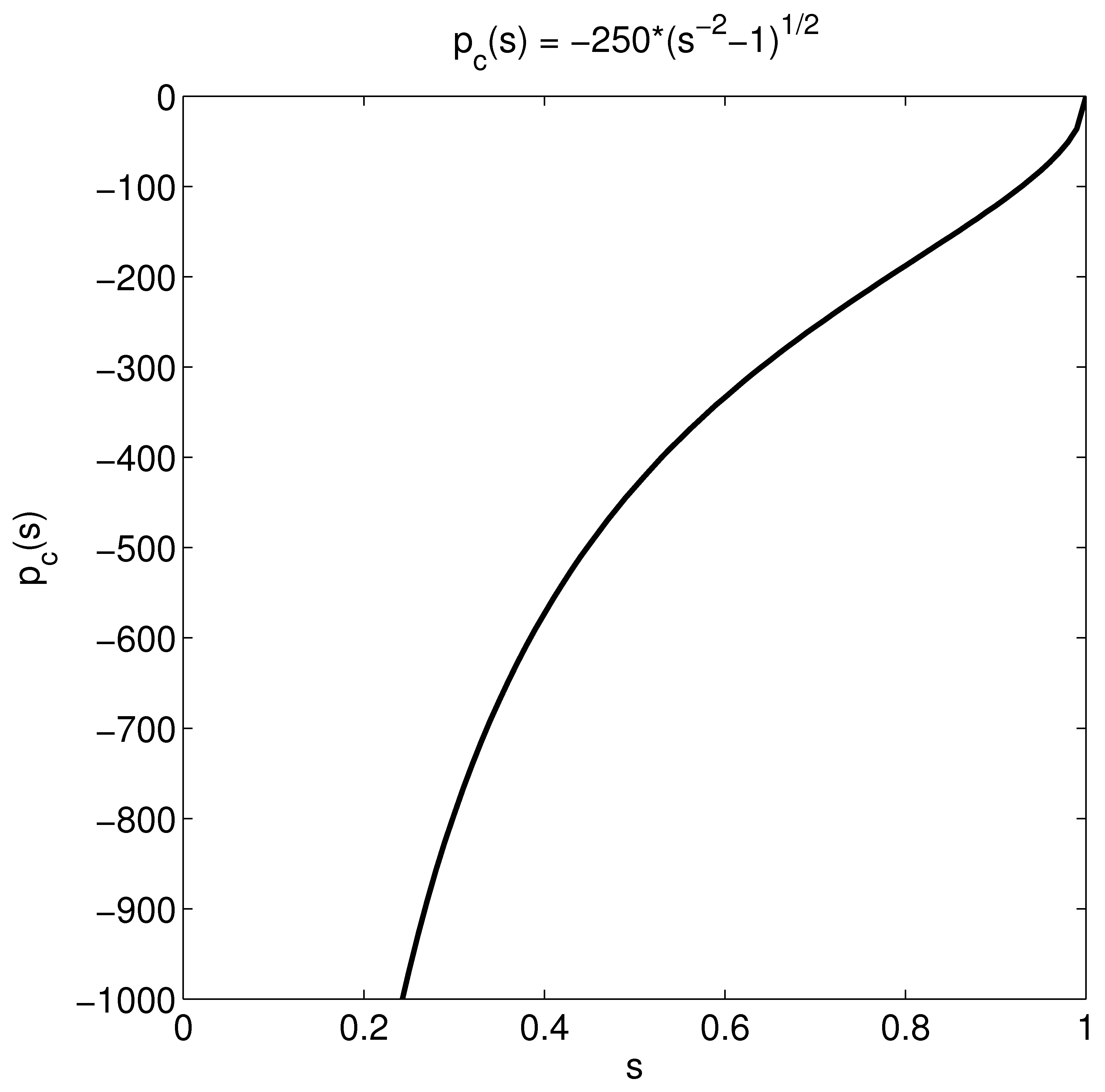} 
\caption{Typical shapes of the coefficient functions $k(s)$ and $p_{c}(s)$}
\label{pc_k_shape}
\end{figure}
Here $n$ denotes the porosity, $\mu$ the viscosity, $k$ the permeability, and ${\boldsymbol e}$ the gravity vector. 
For simplicity we set $n = \mu = 1$ and ${\boldsymbol e}=(0,0,1)^T$ in this paper. 
Due to our assumption of a homogeneous soil, the permeability $k$ depends only on the saturation $s$.  
In the Richards model, the air pressure in the pore space is assumed to be constant. We consider it normalized to 
$ p_\text{gas} = 0$ and replace the water pressure in \eqref{richards} by the capillary pressure function ${\tilde p}_{c}(s)$.  
Thus, the capillary pressure $p_{c}$ has to be seen as a multivalued graph with $p_{c}(s) = \{{\tilde p}_{c}(s)\}$ 
for $s < 1$ and $p_{c}(1) = [{\tilde p}_{c}(1), \infty)$. Furthermore, we assume the residual saturation, meaning the 
saturation level below which no flow of water occurs, to be zero, such that $p_c$ is defined on $[0,1]$. 
This is necessary for the definition of the infiltration boundary conditions. Additionally, we suppose ${\tilde p}_{c}(1) = 0$, 
which is essential for the limiting process on the outflow boundary. Typical shapes of $k(s)$ and $p_{c}(s)$ are depicted 
in Fig.~\ref{pc_k_shape}. It can be seen that the coefficient functions 
degenerate in the sense that $k(0) = 0$ and that $k$ and $p_{c}$ may have slopes that are unbounded in the neighbourhood 
of $s = 1$ and $s = 0$, respectively. These degeneracies of the coefficient functions are one of the major challenges when 
treating \eqref{richards} analytically.

We impose three different types of boundary conditions. On $\Sigma_N$  we prescribe homogeneous Neumann boundary 
conditions. On the outflow boundary $\Sigma_\text{out}$ we choose the Signorini-type boundary conditions
\begin{equation}\label{outflow}
{\boldsymbol q} \cdot {\boldsymbol \nu} \geq 0, \quad p \leq 0, \quad ({\boldsymbol q} \cdot {\boldsymbol \nu})\, p = 0\quad\mbox{on}\enspace\Sigma_\text{out}\times [0,T]
\end{equation}
with ${\boldsymbol q}=-k(s) {{}} ( \nabla p + {\boldsymbol e} )$ the Darcy velocity of the fluid.
This models a hydrophilic porous medium that is in contact with air (cf. \cite{OhlbergerSchweizer07}).
The inequalities \eqref{outflow} express that (i)~water may not enter the domain, 
(ii)~the pressure cannot be positive as $p_\text{air}$ has been normalized to $0$, 
and (iii)~water can exit only if $p = 0$. 
Concerning the analysis given below, a major problem is to give sense to the product of traces 
in the last equality of \eqref{outflow}. 
On the infiltration boundary $\Sigma_\text{in}$, we only consider inflow of water into the soil and 
ponding of water on the surface (without runoff).
We assume that there is an external water source $r(x,t), x \in \Sigma_\text{in}$.  The water at $x$ either infiltrates
directly into the soil, or accumulates in $x$ (ponding).  Hence the state of the surface water
can be described by the water table $w: \Sigma_\text{in} \times [0,T] \to \mathbb{R}$.
At the absence of surface water runoff, $w$ is
modelled as an ensemble of ordinary differential equations
\begin{equation}\label{ode1}
\partial_{t} w = {\boldsymbol q} \cdot \boldsymbol{\nu} + r\quad\text{on $\Sigma_\text{in}\times [0,T]$},
\end{equation}
which is coupled to the subsurface boundary flux $\boldsymbol{q}$ (to be understood in a suitable
weak sense).
Note that although $\Sigma_\text{in}$ is the only part of the boundary through which inflow can occur, we may also have outflow through $\Sigma_\text{in}$.
A possible model for the flux ${\boldsymbol q}$ on $\Sigma_\text{in}$ is to consider it as a vertical leakage through a very small semipervious layer of thickness $b$ and hydraulic conductivity $K_h$~\cite{FiloLuck99}, which can be modelled (cf.~\cite[Chapt.~6.4]{Bear88}) as
\begin{equation}\label{semipervious}
{\boldsymbol q} \cdot {\boldsymbol \nu} = \frac{p - w}{c}.
\end{equation}
Here $c = b/K_h$ is the resistance of the semipervious layer.  Its inverse $c^{-1}$ is also known as leakage coefficient. Unfortunately, \eqref{semipervious} is only valid for $w > 0$, as in the case of $w = 0$ and a negative pressure~$p$, we would obtain an inflow into the ground without water present on the infiltration boundary. Following~\cite{FiloLuck99} we
modify \eqref{semipervious} by a $w$-dependent factor to obtain a new law valid for all $w$
\begin{equation}\label{q_inflow}
{\boldsymbol q} \cdot {\boldsymbol \nu} = \frac{p_{+} + p_{-}\min\bigl\{1,\bigl(\frac{w}{\sigma}\bigr)_{+}\bigr\} - w {{}} }{c},
\end{equation}
where $p_{+} = \max\{p,0\}$ and $p_{-} = \min\{p,0\}$. Now, $w = 0$ in \eqref{q_inflow} implies ${\boldsymbol q} \cdot {\boldsymbol \nu}= 0$ if $p\leq 0$. The parameter $\sigma$ is a regularization threshold, with~\eqref{q_inflow} being the same as \eqref{semipervious} for $w\geq\sigma$. As we will see later, solutions of this model fulfill $w\ge 0$
automatically.

Existence results for unsaturated flow in porous media go back to the pioneering work 
of Alt, Luckhaus, and Visintin \cite{AltLuckhausVisintin84}.
The basis of their approach - which we will also use in this article - is
the so called Kirchhoff transformation. 
\cite{Schweizer07} 
picks up the methods used in \cite{AltLuckhausVisintin84} and proves the
existence of solutions 
for \eqref{richards} by regularization techniques, with a stronger solution
concept than in \cite{AltLuckhausVisintin84}. The vital improvement 
is that \cite{Schweizer07} allows dry regions, meaning regions where the saturation 
lies below the (positive) residual saturation, on the outflow boundary. 
Hence, one has to modify the boundary conditions, as the pressure is not defined in dry regions.
In \cite{Schweizer07} Robin boundary conditions are used in the regularized
problem and convergence to the limit on the outflow
boundary is obtained via defect measures.
Without considering outflow conditions,
\cite{ChenFriedKimu94} also studies dry regions and
proves the continuity of the free boundaries between the saturated 
and unsaturated regions and between the unsaturated and dry regions. 
In \cite{Otto97} uniqueness of the solution is proved for the problem introduced
in \cite{AltLuckhausVisintin84} by adapting the methods developed
in~\cite{Otto96}. In order to show $L^{1}$-contraction and with this the
uniqueness of the solution, doubling of variables techniques are
used as introduced in \cite{Kruzkov70}.
With respect to the condition on the infiltration boundary, in \cite{FiloLuck99}
condition~\eqref{q_inflow} is used to couple the Richards equation with a
hyperbolic PDE modelling ponding of water and surface runoff on the
infiltration boundary. Under the restriction $\partial_{t}s \in
L^{1}(\Omega)$ existence and uniqueness is shown for this coupled system. We
remark that we do not need this assumption in our approach.

Concerning the numerical treatment of the Richards equation, a rich
literature can be found. 
For an overview we refer to \cite[Sec.\;2.2]{BerningerDiss} and
\cite{BerningerKornhuberSander11} and the literature cited therein.  The
Richards equation has been discretized using finite volume methods
\cite{FuhrmannLangmach01,EymardHilhorst99}, mixed finite element
methods~\cite{ArbogastWheelerZhang96,KnabnerRaduPop04}, finite elements
\cite{Fuhrmann94,ForsythKropinski97}, and discontinuous Galerkin
schemes~\cite{Bastianuac05,Dawson2006}.  In most cases, the resulting
algebraic systems
were solved using Newton's method, which, however, suffers from ill-conditioning problems due to
the degeneracies in the parameter functions (Fig.~\ref{pc_k_shape}).
In contrast, \citet{BerningerKornhuberSander11} discretize the Kirchhoff-transformed Richards equation in a way that 
the resulting spatial problems can be solved efficiently using a monotone multigrid method.
Since this approach is based on convexity rather than on smoothness, it is also
well-suited for non-smooth Signorini-type boundary conditions. Numerical studies
in 
\cite{BerningerKornhuberSander11} demonstrate the efficiency of the solver, as
well as robustness for extreme soil parameters which correspond to parameter
functions that degenerate into step functions. An extension of this approach to 
heterogeneous soil using domain decomposition techniques can be found in~\cite{BerningerSander10}.
Concerning the numerical coupling of Richards equation with surface water,
we mention \cite{SochalaErnPiperno2009,Dawson2006,Dawson2008}, where
shallow water equations are used as the surface water model. Although 
different discretizations are applied, all these approaches enforce continuity of
pressure and normal flux, i.e., mass conservation, across the interface. These
coupling conditions have also been considered in an approach based on the
Kirchhoff transformed subsurface flow in~\cite{Bastian_et_al:2012}
and~\cite{BernKornSand11}, where an implicit time discretization of the coupled subsurface and 
surface water models has been solved with a heterogeneous domain decomposition
method. In \cite{BernKornSand11}, however, the pressure continuity was replaced by a
leakage condition such as \eqref{semipervious} that represented clogging of a
nearly impermeable river bed. Accordingly, a Robin--Neumann type iteration was
applied in~\cite{BernKornSand11}, whereas in~\cite{Bastian_et_al:2012} a
Dirichlet--Neumann type iteration was used to solve the coupled system. In the
present article, we choose an explicit time discretization for the surface water
as in \cite[Sec.\;4.3]{BerningerDiss}.

Let us now give an outline of the article.
In the first part of this article we prove a new existence result for the Richards equation~\eqref{richards} endowed 
with nonlinear outflow conditions of Signorini's type~\eqref{outflow} and coupled to surface water by nonlinear Robin conditions (\ref{ode1}, \ref{q_inflow}). The proof is done without requiring the assumption $\partial_{t}s \in L^{1}(\Omega)$. To achieve this goal, in Section~\ref{system_ass} we introduce a global pressure~$u$ with the help of the Kirchhoff transformation and reformulate the system accordingly. In Section~\ref{sec:existence}, we state the main existence theorem and derive its proof by combining two central approaches for existence and uniqueness results for nonlinear, degenerated 
parabolic PDEs, namely the regularization technique (cf.~\cite{OhlbergerSchweizer07,Schweizer07}) and the method 
of $L^{1}$-contraction (cf.~\cite{Otto97,FiloLuck99}).

In the second part of the article, starting with Section~\ref{discreteConvex}, we present our 
discretization of the coupled nonlinear problem that is implicit--explicit in time and uses finite elements in space. 
Via variational inequalities, the implicit--explicit time discretization leads to spatial convex 
minimization problems which contain nonlinear outflow and nonlinear physical
Robin boundary conditions but no Dirichlet conditions. We prove unique solvability of these problems if the
relative permeability is non-degenerate (e.g., regularized) and the water table
is non-zero.
In Section~\ref{numerics}, we give a numerical example combining the coupled surface--subsurface problem
and Signorini outflow conditions.  We observe that the proposed time discretization leads to a stable
method for reasonable time step sizes.  We also observe that the surface water height remains nonnegative
as predicted by the theory, with the exception of certain singular points.
Our numerical approach proves to be useful even beyond the confines of the assumptions needed
for the existence result.

\section{Global pressure formulation of the fully coupled system and assumptions on the data}
\label{system_ass}

For a better assessment of the unsaturated flow equation, we use Kirchoff's transformation 
to obtain an equivalaent formulation for a global pressure $u$. We then summarize the resulting fully coupled 
system with nonlinear boundary conditions and detail all assumptions on the data that we will need
for our analysis. 

Let us define the Kirchoff transformation 
\begin{align}
\label{Kirchhoff}
 {\tilde \Phi}(s) \colonequals \underset{0}{\overset{p_{c}(s)}{\int}} k(p_{c}^{-1}(q))\, dq, \qquad 
 \Phi(s) = \begin{cases}
           {\tilde \Phi}(s) \enspace &\mbox{if} \enspace s < 1,\\
	   [ {\tilde \Phi}(1), \infty) \enspace &\mbox{if} \enspace s = 1,
           \end{cases}
\end{align}
which yields $\nabla \Phi(s) = k(s) \nabla p_{c}(s).$ We then define the global pressure 
function
\begin{align*}
u(x,t) \colonequals \begin{cases}
          {\tilde \Phi}(s(x,t)) \enspace &\mbox{if}\enspace s(x,t) < 1,\\
	  {\tilde \Phi}(1) + k(1)(p(x,t) - {\tilde p}_{c}(1)) &\mbox{if} \enspace s(x,t) = 1.
	  \end{cases}
\end{align*}
We thus have $\nabla u = k(s) \nabla p$ for a sufficiently smooth solution $(s,p)$ of \eqref{richards}. 
Therefore, we obtain as equivalent versions of the Richards equation
\begin{align}
 \nonumber n\partial_{t} s &= \mu^{-1}\divergence(\nabla u + k(s) \cdot \gamma{\boldsymbol e}) + f(s), \quad u \in \Phi(s),\\[-1.5ex]
&\label{richKirch}\\[-1.5ex]
 \nonumber \mbox{or} \enspace n\partial_{t} \Phi^{-1}(u) &= \mu^{-1}\divergence (\nabla u + k(\Phi^{-1}(u)) \cdot \gamma{\boldsymbol e}) + f(\Phi^{-1}(u)).
\end{align}
Conversely, the inverse Kirchhoff transformation $
\kappa^{-1}:u\mapsto p
$
can be defined as follows (cf.~\cite{BerningerKornhuberSander11}) 
\begin{align}
\label{inverseKirchhoff}
p = \kappa^{-1}(u) \colonequals \begin{cases}- h(u) \enspace &\mbox{if} \enspace u \leq 0,\\
 \frac{u_{+}}{k(1)} \enspace &\mbox{else} \enspace \end{cases},
 \qquad h(u)\,\colonequals\, \underset{u_{-}}{\overset{0}{\int}}\, \frac{1}{k(\Phi^{-1}(\tau))}\enspace d\tau.
\end{align}
With the global pressure formulation (\ref{richKirch}) we are now prepared to state our fully coupled system.

\subsection{Fully coupled system for unsaturated flow in porous media with infiltration and Signorini-type outflow conditions}

Let us define $\Omega_{T} = \Omega \times [0,T]$, $\Omega_{0} = \Omega \times \{0\}$ and $\Sigma_{i,t} = \Sigma_{i} \times [0,t]$ for $t\in [0,T]$ with $i = \text{in},\text{out},N$. With given initial values $s_0$ and $w_0$, we get the following system of differential equations for the unknown global pressure $u$ and surface water height $w$
\begin{alignat}{2}
\label{system}
\nonumber  u \in \Phi(s), \enspace \mbox{and} \enspace \partial_{t} s  &= \divergence (\nabla u + k(s) \cdot {\boldsymbol e}) + f(s) \quad &\quad&\text{in $\Omega_{T}$},\\
\nonumber - (\nabla u + k(s) \cdot {\boldsymbol e} ) \cdot {\boldsymbol \nu} &\geq 0 &&\mbox{on} \enspace \overline{\Sigma}_{\text{out},T}\cup \Sigma_{N,T},\\
\nonumber u \leq {\tilde \Phi}(1) \enspace &\mbox{and} \enspace k^{2}(s)s - k^{2}(1)1 \leq 0 &&\mbox{on} \enspace \Sigma_{\text{out},T},\\
0 \leq -((\nabla u + k(s) &\cdot {\boldsymbol e})\cdot {\boldsymbol \nu}) \cdot (k^{2}(s)s - k^{2}(1)1) &&\mbox{on} \enspace \Sigma_{\text{out},T},\\
\nonumber (\nabla u + k(s) \cdot {\boldsymbol e})\cdot {\boldsymbol \nu} &= 0 &&\mbox{on} \enspace \Sigma_{N,T},\\
\nonumber (\nabla u + k(s) \cdot {\boldsymbol e})\cdot {\boldsymbol \nu} &= {{}} g(u,w) &&\mbox{on} \enspace \Sigma_{\text{in},T},\\
\nonumber s &= s_{0} &&\mbox{in} \enspace \Omega_{0},\\
\nonumber\partial_{t} w &= (r - g(u,w)) &&\mbox{in} \enspace \Sigma_{\text{in},T},\\
\nonumber w &= w_{0} &&\mbox{on} \enspace \Sigma_{\text{in},0},
\end{alignat}
where
\begin{equation*}
g(u,w) = - \frac{u_{+}}{k(1)c} + \frac{w {{}} }{c} + \frac{h(u) \psi(w)}{c},  \quad \mbox{and}  \quad \psi(w) = \min \left \{ 1, \left (\frac{w}{\sigma} \right)_{+} \right \}.
\end{equation*}
As the mappings $s\mapsto k^{2}(s)s$ and $s \mapsto p_{c}(s)$ are both monotone functions, and as
$p_{\text{gas}} = 0$ and ${\tilde p}_{c}(1) = 0$, the outflow conditions stated in \eqref{outflow} and the outflow conditions prescribed in \eqref{system} are formally equivalent. 

It is also possible to allow ${\tilde p}_{c}(1) < 0$ as it is the case, e.g., for Brooks--Corey parameter functions~\cite{BrooksCorey64}. But then the outflow conditions in~\eqref{outflow} and the outflow conditions in \eqref{system} are no longer equivalent. We would have to deal either with the conditions in \eqref{outflow} or with similar conditions formed by replacing $p$ by $u$ in \eqref{outflow} as done, e.g, in \cite{BerningerDiss}. In such case it would be very difficult to give a meaning to the traces in the last equality in \eqref{outflow}, as it demands an a priori estimate for $\|\divergence({\boldsymbol q})\|_{L^{2}(\Omega)}$, which in turn necessitates an estimate for $\|\partial_{t}s\|_{L^{2}(\Omega)}$. In order to solve this problem, one can either assume $\|\partial_{t}s\|_{L^{2}(\Omega)}\leq C$ or $\Phi^{\prime}(s) \geq c_{1} > 0$. If one makes one of these assumptions, it is also possible to show an existence result for the system with original outflow conditions \eqref{outflow}, based on 
the results in \cite{PopSchweizer11}. 
Otherwise, it is only feasible to derive a weighted $L^{2}$-estimate for $\divergence({\boldsymbol q})$ \cite{Schweizer07}
 and the original outflow conditions have to be replaced by some surrogate ones as suggested in \cite{Schweizer07} and~\eqref{system}.

\subsection{Assumptions on the data}

In order to guarantee among others the well-definedness of the boundary conditions, the coefficient functions have to fulfill several assumptions. Additional assumptions are needed for the proof of our existence result (see Theorem \ref{existence}). 


The relative permeability and capillary pressure functions are supposed to fulfill the following assumption.

\begin{assumption}\label{ass1}
 The permeability 
\begin{equation*}
 k \in C^{1}([0,1],[0,\infty)) \enspace \mbox{is monotonically non-decreasing}
\end{equation*}
and $k(s) = 0$ for $s = 0$. The capillary pressure $p_{c}: (0,1] \rightarrow \{0,1\}^{\mathbb{R}}$ is a monotone graph given by a function 
\begin{equation*}
 {\tilde p}_{c} \in C^{1}((0,1),\mathbb{R}), \enspace \mbox{monotonically increasing}.
\end{equation*}
We set $p_{c}(s) = \{{\tilde p}_{c}(s)\}$ for $s \in (0,1)$ and $p_{c}(1) = [{\tilde p}_{c}(1), \infty).$ Furthermore, we suppose  ${\tilde p}_{c}(1) = 0$ (hydrophilic case). Finally, we assume that with a constant $c_{0} > 0$ the following conditions hold:
\begin{alignat}{2}
\partial_{s} p_{c}(s) &\geq 1/c_{0} &\qquad&\forall\, s \in (0,1),\nonumber\\
|\partial_{s} k(s)|^{2} & \leq c_{0}\, k(s)  &&\forall\, s \in (0,1),\nonumber\\[-1.55ex]
\label{pc_k_estimates}&&\\[-1.55ex]
k(s)|p_{c}(s)| + \sqrt{k(s)}\partial_{s}p_{c}(s) &\leq c_{0}  &&\forall\, s \in (0,1/2),\nonumber\\
(1-s) \sqrt{\partial_{s} p_{c}(s)} &\leq c_{0}  &&\forall\, s \in (1/2,1).\nonumber
\end{alignat}
In order to guarantee that the condition on the infiltration boundary is well-defined, we additionally presume that the coefficient functions are chosen in such a way that
\begin{equation}
\label{crucial_impli}
u_{-} \in L^{\infty}(\Omega_{T})\enspace\Longrightarrow p_{-} \in L^{\infty}(\Omega_{T}).
\end{equation}
\end{assumption}

Taking Assumption \ref{ass1} for granted, we can prove $u_{-} \in L^{\infty}(\Omega_{T})$. Together with \eqref{crucial_impli} this yields the well-definedness of the conditions on the infiltration boundary. 

To justify the above assumptions from a physical viewpoint, we check if they can be fulfilled for the van Genuchten model \cite{vanGenuchten80} of the coefficient functions 
\begin{equation*}
k(s) = s^{1/2} \cdot \bigl(1 - (1 - s^{1/m})^{m} \bigr)^{2}, \quad p_{c}(s) = - \frac{1}{\alpha} (s^{-1/m}-1)^{1/l},
\end{equation*}
with $m=1-1/l$. In Table~\ref{bodenparameter} some typical values for $k(1)\gamma/\mu$ --- the hydraulic conductivity at full saturation --- and two variable parameters $\alpha$ and $l$ for five different soil types are listed. The values are extracted from~\cite{vanGenuchten80}.
\begin{table}[t]
\begin{center}
\begin{tabular}{|c|c|c|c|}
\hline soil type & $k(1)\gamma/\mu$ $[\text{cm}/\text{day}]$ & $\alpha$ $[\text{cm}^{-1}]$ & $l$ $[-]$\\ \hline 
\hline Hygiene sandstone & 108.0 & 0.0079 & 10.4 \\
\hline Touchet Silt Loam G.E.3 & 303.0 & 0.005 & 7.09 \\
\hline Silt Loam G.E.3 & 4.96 & 0.00423 & 2.06 \\
\hline Guelph Loam (drying) & 31.6 & 0.0115 & 2.03 \\
\hline Guelph Loam (wetting) & - & 0.02 & 2.76 \\
\hline Beit Netofa Clay & 0.082 & 0.00152 & 1.17 \\
\hline
\end{tabular}
\caption{Typical values for $k(1)\gamma/\mu$, $\alpha$ and $l$ used for the van Genuchten parametrization~\cite{vanGenuchten80} for five different soil types.}\label{bodenparameter} 
\end{center}
\end{table}
Starting with the implication \eqref{crucial_impli}, we see that this holds true for $l < 3$. Concerning estimates \eqref{pc_k_estimates}, we observe that they are all fulfilled except for the second one. This is due to the fact that the description of the relative permeability $k(s)$ proposed by van Genuchten is bounded for $s = 1$, but on the other hand its derivative blows up. However, since unbounded derivatives of $k$ are hydrologically unrealistic, we can modify $k(s)$ for $s$ near $1$ so that all estimates \eqref{pc_k_estimates} are satisfied. We emphasize the importance of the third inequality in \eqref{pc_k_estimates}, which guarantees that $k$ declines fast enough for small $s$ to absorb the pressure $p_{c}$, which goes to $-\infty$ in this case. 

Another parametrization of $k(s)$ and $p_c(s)$ is given by Brooks and Corey~\cite{BrooksCorey64}, see also \cite{Burdine53} and~\cite{vanGenuchten80}. One can check that all estimates \eqref{pc_k_estimates} are satisfied for this parametrization, however, with regard to Assumption~\ref{ass1}, we always have $\tilde{p}_c(1)<0$. Moreover, it is a characteristic property of the Brooks--Corey functions that the crucial implication \eqref{crucial_impli} is never fulfilled. On the contrary, the inverse Kirchhoff transformation \eqref{inverseKirchhoff} is always ill-posed around the minimal global pressure $u_c=\Phi(0)$ with $\kappa^{-1}(u_c)=-\infty$, cf.~\cite[Sec.\;1.3]{BerningerDiss}. With regard to the analysis in physical variables, we can consider regularizations of Brooks--Corey functions which exhibit the non-degenerate case $k(s)>c_0>0$, for which implication \eqref{crucial_impli} is satisfied, cf.~\cite[Sec.\;1.4.3]{BerningerDiss} and Section~\ref{discreteConvex}. However, even though the 
numerics for the Richards equation is challenging for the degenerate Brooks--Corey functions, we use them in our numerical examples in Section~\ref{numerics} analogue to~\cite{BerningerKornhuberSander11}.\\

\label{ass_data}
Concerning the source term, 
we assume that $f: \Omega_{T} \times [0,1] \rightarrow \mathbb{R}$ is bounded, continuously differentiable in all entries, and monotonically decreasing in $s$. Moreover, $f$ shall satisfy $f(x,t,1) \leq 0$ and $f(x,t,0) \geq 0$ for all $(x,t) \in \Omega_{T}$. Eventually, we suppose that $s \mapsto f(x,t,s)$ is affine on $(0,\lambda)$ for small $\lambda$. Furthermore, we assume that $r: \Sigma_{\text{in},T} \rightarrow \mathbb{R}^{+}$  is bounded and Lipschitz continuous. The initial conditions shall be given by functions $s_{0}: \Omega \rightarrow [0,1]$ and $w_0:\Sigma_\text{in}\to\R^+$, with $s_0 \in L^{\infty}(\Omega)$ and $w_0 \in L^{\infty}(\Sigma_\text{in})$ and
\begin{equation*}
s(x,0) = s_{0}(x) \quad \mbox{a.e. in} \enspace \Omega \quad \mbox{and} \quad w(x,0) = w_{0}(x) \quad \mbox{a.e.\ on} \enspace \Sigma_\text{in}.
\end{equation*}
We impose the following compatibility condition for $s_{0}$:
\begin{multline*}
 \exists\,p_{0}: \Omega \rightarrow \mathbb{R}, \enspace p_{0} \in p_{c}(s_{0}) \enspace \mbox{a.e.\ on} \enspace  \{k(s_{0}) > 0\}, \\
 p_{0} \in H^{1,2}(\Omega) \cap L^{\infty}(\Omega), \enspace s_{0}|_{\Sigma_\text{in}} > 0.
\end{multline*}

\section{Existence result for the degenerate coupled system}
\label{sec:existence}

The goal of this section is to prove an existence result for system \eqref{system}. Unfortunately, \eqref{system}
exhibits some unpleasant properties. First, the system is degenerate in the sense that $k(s)\to 0$ for $s\to 0$ and that the derivative $p_{c}'(s)$ may be unbounded in the neighbourhood of $s = 0$ and  $s = 1$. As a result, the solution of the Richards equation lacks regularity, making the treatment of the boundary conditions challenging, as it is difficult to give sense to the appearing traces of the functions. 
Moreover, we deal with a coupled system due to the boundary conditions at the infiltration boundary. 
To achieve our existence result, we thus first regularize the coefficient functions $k(s)$ and $p_{c}(s)$ 
with a small regularization parameter $\delta$ in order to obtain a non-degenerate system. 
Afterwards, we fix $\delta$ and decouple the system. Existence of solutions for the decoupled sub-problems 
then follows by standard arguments. By applying Banach's fixed
point theorem, we get a unique solution $(u_\delta,w_\delta)$ for the regularized coupled system
system, for fixed $\delta$. The crucial part when proving a priori estimates
independent of $\delta$ is the derivation of a maximum principle for
$u_{\delta}$ and $w_{\delta}$. This can be achieved by considering classical
solutions of a regularized parabolic system. Finally, we pass to the limit in
$\delta$ and get a solution for our original problem \eqref{system}. 
Our main result summarizes as follows.

\begin{theorem}[Existence of a solution pair $(u,w)$ of system \eqref{system}]\label{existence}
Let the assumptions from Section~\ref{system_ass} be fulfilled and let the pair $(u_{\delta},w_{\delta})$
be a sequence of regularized solutions in $L^{\infty}(\Omega_{T})\cap
L^{2}((0,T),H^{1,2}(\Omega)) \times C^{0}([0,T],L^{1}(\Sigma_\text{in}))\cap
L^{\infty}((0,T),L^{2}(\Sigma_\text{in}))$. Then there exists a subsequence
$(u_{\delta},w_{\delta})$ and a solution pair $(u,w)$ $\in
L^{2}((0,T),H^{1,2}(\Omega))$ $\times C^{0}([0,T],L^{1}(\Sigma_\text{in}))$ such
that 
\begin{align*}
u_{\delta} \rightharpoonup u \enspace &\mbox{in} \enspace L^{2}((0,T),H^{1,2}(\Omega)),\\
w_{\delta} \rightarrow w \enspace &\mbox{in} \enspace C^{0}([0,T],L^{1}(\Sigma_\text{in})),
\end{align*}
and $(u,w)$ are solutions of \eqref{system} in the distributional sense. Additionally, $w \geq 0$ holds a.e.\ on $\Sigma_\text{in}$ for all $t \in [0,T]$.
\end{theorem}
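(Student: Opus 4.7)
The plan is to take the sequence $(u_\delta,w_\delta)$ of regularized solutions as given, to derive a priori bounds uniform in $\delta$, extract weak/strong limits, and then to pass to the limit in each equation of \eqref{system}. The three principal ingredients are (i) energy estimates together with the maximum principle already available from the regularized construction, (ii) compactness arguments \`a la Aubin--Lions for $u_\delta$ and an $L^1$-contraction for $w_\delta$, and (iii) the handling of the Signorini-type outflow condition in the limit, which is the main technical obstacle.

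First I would collect the uniform a priori bounds. The maximum principle for the classical regularized solutions yields $\|u_\delta\|_{L^\infty(\Omega_T)} + \|w_\delta\|_{L^\infty(\Sigma_{\text{in},T})} \le C$ independently of $\delta$. Testing the regularized Richards equation with $u_\delta$ itself, exploiting the structural bounds in \eqref{pc_k_estimates} through Young's inequality, produces the energy estimate
\begin{equation*}
\|u_\delta\|_{L^2((0,T),H^{1,2}(\Omega))}^2 + \sup_{t\in[0,T]} \int_\Omega \mathcal{B}_\delta(s_\delta(\cdot,t))\,dx \le C,
\end{equation*}
with $\mathcal{B}_\delta$ a primitive controlling $s_\delta$. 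Combined with Lipschitz continuity of $g$ and boundedness of $r$, the ODE for $w_\delta$ gives uniform Lipschitz control in time of $w_\delta$ as a map $[0,T]\to L^1(\Sigma_\text{in})$.

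Next I would extract subsequences. The energy estimate gives $u_\delta\rightharpoonup u$ weakly in $L^2((0,T),H^{1,2}(\Omega))$, which is the first convergence claimed by the theorem. To pass the nonlinearities $k(s_\delta)$, $p_c(s_\delta)$ and $h(u_\delta)$ to the limit, pointwise convergence of $s_\delta$ (equivalently of $u_\delta$) is needed, which I would obtain by an Aubin--Lions/Simon-type argument controlling time translations of the regularized version of \eqref{richKirch} in the spirit of \cite{AltLuckhausVisintin84}. For the surface component, an $L^1$-contraction estimate of the form
\begin{equation*}
\partial_t \|w_\delta^{(1)} - w_\delta^{(2)}\|_{L^1(\Sigma_\text{in})} \le C\,\|u_\delta^{(1)} - u_\delta^{(2)}\|_{L^1(\Sigma_\text{in})}
\end{equation*}
combined with the uniform time-Lipschitz property and an Arzel\`a--Ascoli argument yields strong convergence $w_\delta\to w$ in $C^0([0,T],L^1(\Sigma_\text{in}))$, as desired.

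The hard part is the limit of the outflow conditions on $\Sigma_{\text{out}}$: the complementarity relation in \eqref{system} involves a product of the flux trace (only weakly convergent) with a nonlinear function of the saturation trace, for which boundary compactness is not directly available. Following \cite{Schweizer07,OhlbergerSchweizer07}, I would introduce a defect measure quantifying the loss of compactness of the flux $-({\nabla u_\delta + k(s_\delta)\boldsymbol{e}})\cdot\boldsymbol{\nu}$ on $\Sigma_\text{out}$, show that this defect is non-negative and concentrated on the set where the limit saturation equals one, and then read off the distributional outflow condition. The Robin-type condition on $\Sigma_\text{in}$ is comparatively straightforward and passes by strong convergence of $w_\delta$, trace convergence of $u_\delta$, and Lipschitz continuity of $g$. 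Finally, non-negativity $w\ge 0$ follows from an ODE comparison at the regularized level: since $\psi(0)=0$ and $r\ge 0$, we have $\partial_t w_\delta \ge 0$ wherever $w_\delta = 0$, so $(w_\delta)_- \equiv 0$, and this property is inherited by the $L^1$-limit $w$.
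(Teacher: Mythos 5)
Your roadmap mirrors the paper's: regularize, derive $\delta$-uniform a priori bounds including a maximum principle, extract weak/strong limits, and handle the Signorini outflow condition via defect measures and compensated compactness \`a la \cite{Schweizer07}. A few technical choices differ. For the compactness of $u_\delta$ you invoke Aubin--Lions/Simon; the paper instead uses cut-off functions $\alpha_\varepsilon$, truncations $\eta_\varepsilon(u_\delta)=(u_\delta+\varepsilon)_-$, and the Riesz characterization of compact sets, precisely because $\Phi_\delta'$ is unbounded near full saturation: time control from the PDE sits on $\partial_t\Phi_\delta^{-1}(u_\delta)=\partial_t s_\delta$, which gives strong convergence of $s_\delta$ but does not transfer to $u_\delta$ where $\Phi_\delta$ is steep. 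You treat them as "equivalent," which they are not in the degenerate regime. For $w_\delta$ you combine $L^1$-contraction with Arzel\`a--Ascoli; the paper instead derives $w_\delta\to w$ via Gronwall after first establishing $L^2(\Sigma_{\text{in},T})$-convergence of $(u_\delta)_+$ and $h_\delta(u_\delta)$. Your ODE-comparison argument for $w\ge 0$ is a cleaner phrasing of the same invariance principle that the paper realizes through an explicit Euler discretization with a time-step restriction.

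The genuine gap is the infiltration boundary, which you call "comparatively straightforward." This is actually where the paper invests the most work. Your assertion that $g$ is Lipschitz, and your $L^1$-contraction bound, both rest on control of $h_\delta(u_\delta)$ and of $h_\delta'(u_\delta)\sim 1/k_\delta(\Phi_\delta^{-1}(u_\delta))$; since $k_\delta(0)=\delta^2\to 0$, these constants degenerate as $\delta\to 0$ unless one can exclude the dry set $\{s=0\}$. Indeed $h(u)$ need not be Lebesgue-integrable on $\{s=0\}$, and $h_\delta(u_\delta)$ is not bounded there uniformly in $\delta$ without further structure. The paper resolves this by combining the maximum principle (bounding $(-p_\delta)_+$) with the assumption \eqref{crucial_impli} relating $u_-$ and $p_-$ in $L^\infty$, and only then establishes that $h_\delta(u_\delta)$ has a well-defined trace and converges strongly in $L^2(\Sigma_{\text{in},T})$. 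Without this argument, both your passage to the limit on $\Sigma_\text{in}$ and the time-Lipschitz control of $w_\delta$ that underlies your Arzel\`a--Ascoli step are unjustified.
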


In the following subsections we will prove Theorem \ref{existence} as indicated above.

\subsection{Regularization of the coupled problem}\label{regul}

At first, we regularize the coefficient functions.

\begin{assumption}[Regularized coefficient functions]\label{ass2}
The regularized coefficient functions should fulfill the following conditions:
\begin{equation*}
k_{\delta} \in C^{1}([0,1],(0,\infty)) \quad \mbox{and} \quad \rho_{\delta} \in C^{0}([0,1], \mathbb{R}) \enspace \mbox{piecewise} \enspace C^{1}
\end{equation*}
are monotonically increasing. For $\delta \rightarrow 0$ we have $k_{\delta} \searrow k$ uniformly on [0,1], $\rho_{\delta} \rightarrow p_{c}$ uniformly on compact subsets of $(0,1)$ and $k_{\delta}(0) = \delta^{2}$. We suppose that $\bigcup_{\delta} \rho_{\delta}([0,1]) = \mathbb{R}$ and, for simplicity, that $p_{c}(1/2) = \rho_{\delta}(1/2)$ holds. Finally, $k_{\delta}$ and $\rho_{\delta}$ shall fulfill the inequalities of Assumption \ref{ass1}. 
\end{assumption}

The Assumptions \ref{ass1} and \ref{ass2} allow the definition of a regularized global pressure via the Kirchhoff transformation
\begin{equation*}
u_{\delta}(x,t) = \Phi_{\delta}(s) = \underset{0}{\overset{\rho_{\delta}(s)}{\int}} k_{\delta}(\rho_{\delta}^{-1}(q))\, dq,
\end{equation*}
and guarantee that $\Phi_{\delta}$ is bounded from below and that $\Phi_{\delta} \rightarrow \Phi$ uniformly on compact subsets of~$[0,1).$ We set $u_{c}= \Phi(0)$ as the minimal global pressure. 
\begin{example}
Let Assumption \ref{ass1} be fulfilled and suppose that $c_{1}s^{2} \leq k(s) \leq c_{2}s^{2}$ on $(0,1)$ for constants $0 < c_{1} \leq c_{2}$. Then the following regularization fulfills Assumption \ref{ass2}.
\begin{align*}
k_{\delta}(s) \colonequals \delta^{2} + k(s) \quad \forall \, s \in [0,1], \
\rho_{\delta}(s) \colonequals 
\begin{cases}
p_{c}(\delta) + \frac{s-\delta}{\delta} \quad        &\forall\, s \in [0,\delta], \\
p_{c}(s)                                    &\forall\, s \in (\delta,1-\delta], \\
p_{c}(1-\delta) + \frac{s - (1- \delta)}{\delta^{2}} &\forall\, s \in (1-\delta,1].
\end{cases}
\end{align*}
\end{example}

Note that all inequalities \eqref{pc_k_estimates}, which have to be satisfied by the regularized coefficient functions, except for inequality \eqref{pc_k_estimates}$_3$, are needed for the a priori estimates. If \eqref{pc_k_estimates}$_3$ is not fulfilled, we would still be able to prove convergence of the approximate solution pair to a limit $(u,w)$, but this limit would not necessarily solve our problem.

Using the regularized coefficient functions, we get the regularized Richards equation
\begin{equation*}
 \partial_{t}s_{\delta} = \divergence (\nabla u_{\delta} + k_{\delta}(s_{\delta}) \cdot {\boldsymbol e}) + f_{\delta}, \quad u_{\delta}= \Phi_{\delta}(s_{\delta}),
\end{equation*}
with the pressure $p_{\delta} = \rho_{\delta}(s_{\delta})$ and $f_{\delta} = f(x,t,s_{\delta}(x,t))$. In a straightforward manner we obtain the conditions on the Neumann and the infiltration boundary:
\begin{alignat*}{2}
(\nabla u_{\delta} + k_{\delta}(s_{\delta}) \cdot {\boldsymbol e})\cdot {\boldsymbol \nu} &=g_{\delta}(u_{\delta},w_{\delta}) &\qquad&\text{on $\Sigma_{\text{in},T}$}\\
(\nabla u_{\delta} + k_{\delta}(s_{\delta})\cdot {\boldsymbol e}) \cdot {\boldsymbol \nu} &= 0 &&\text{on $\Sigma_{N,T}$}.
\end{alignat*}
Inspired by \citep{Schweizer07}, we impose the following nonlinear Robin condition on the outflow boundary:
\begin{equation}\label{regout}
-(\nabla u_{\delta} + k_{\delta}(s_{\delta})\cdot {\boldsymbol e}) \cdot {\boldsymbol \nu} = \frac{1}{\delta} k_{\delta}(s_{\delta})(p_{\delta})_{+}.
\end{equation}
In this way, we keep the idea behind the original ouflow conditions, as water can flow out only if $p \geq 0$ and if $p > 0$ we have a very large outflow due to the scaling $1/\delta$. On the other hand, boundary condition \eqref{regout} is more handy than the original one, as we do not have to deal with a variational inequality. Rewriting \eqref{regout} with ${\mathbf q}_{\delta} = - (\nabla u_{\delta} + k_{\delta}(s_{\delta})\cdot {\boldsymbol e})$ one gets
\begin{equation*}
{\mathbf q}_{\delta}\cdot {\boldsymbol \nu} = k_{\delta}(s_{\delta}) \frac{(p_{\delta})_{+} - 0}{\delta},
\end{equation*}
which, similar to our condition on the infiltration boundary, can be interpreted as a pressure difference between the pressure inside the porous medium and the pressure outside $p_{\text{gas}} = 0$. The initial conditions are replaced by
\begin{align*}
s_{\delta}(x,0) = s^{\delta}_{0}(x) \colonequals \begin{cases}
                                        \rho_{\delta}^{-1}(p_{0}(x)) \enspace &\mbox{if} \enspace s_{0} > 1/2,\\
					s_{0}(x) &\mbox{if} \enspace s_{0} \leq 1/2,
                                       \end{cases}
\end{align*}
for all $x \in \Omega$, guaranteeing $\Phi_{\delta}(s_{0}^{\delta}) \in H^{1,2}(\Omega)$. 

We get the following regularized system for the unknowns $u_{\delta}$ and $w_{\delta}$
\begin{alignat}{2}
\label{regsys}
\nonumber  u_{\delta} = \Phi_{\delta}(s_{\delta}), \enspace \partial_{t} s_{\delta} &= \divergence ( \nabla u_{\delta} + k_{\delta}(s_{\delta})\cdot {\boldsymbol e} ) + f(s_{\delta})
 \quad &\qquad&\mbox{in} \enspace \Omega_{T}, \\
\nonumber -( \nabla u_{\delta} + k_{\delta}(s_{\delta})\cdot {\boldsymbol e} ) \cdot {\boldsymbol \nu} &= \frac{1}{\delta}k_{\delta}(s_{\delta})(p_{\delta})_{+}, &&\mbox{on} \enspace \Sigma_{\text{out}, T},\\
(\nabla u_{\delta} + k_{\delta}(s_{\delta}) \cdot {\boldsymbol e})\cdot {\boldsymbol \nu} &= 0 &&\mbox{on} \enspace \Sigma_{N, T}, \\
\nonumber(\nabla u_{\delta} + k_{\delta}(s_{\delta}) \cdot {\boldsymbol e})\cdot {\boldsymbol \nu} &=g_{\delta}(u_{\delta},w_{\delta}) &&\mbox{on} \enspace \Sigma_{\text{in}, T},\\
\nonumber s_{\delta}(x,0) &= s_{0}^{\delta}\colonequals
\begin{cases}
\rho_{\delta}^{-1}(p_{0}(x)) &\mbox{if} \enspace s_{0}(x) > \frac{1}{2}, \\
\nonumber s_{0}(x) &\mbox{if} \enspace s_{0}(x) \leq \frac{1}{2},
\end{cases} 
&&\mbox{in} \enspace \Omega_{0},\\
\nonumber 
\partial_{t}w_{\delta} &= (r - g_{\delta}(u_{\delta},w_{\delta})) &&\mbox{in} \enspace \Sigma_{\text{in}, T}, \\
\nonumber w_{\delta} &= w_{0} &&\mbox{on} \enspace \Sigma_{\text{in}, 0},
\end{alignat}
where
\begin{align*}
g_{\delta}(u_{\delta},w_{\delta}) &= - \frac{(u_{\delta})_{+}}{k_{\delta}(1)c} + \frac{w_{\delta}}{c} + \frac{h_{\delta}(u_{\delta})\psi(w_{\delta})}{c}, \\
h_{\delta}(u_{\delta}) &= \underset{(u_{\delta})_{-}}{\overset{0}{\int}} \frac{1}{k_{\delta}(\Phi_{\delta}^{-1}(\tau))} d\tau, \qquad \psi(w_{\delta})= \min\left\{1,\left(\frac{w_{\delta}}{\sigma}\right)_{+}\right\}.
\end{align*}

\subsection{Existence of a unique solution for fixed
\texorpdfstring{$\boldsymbol \delta$}{delta}}

Let $\delta$ be fixed. To prove the existence of a unique solution pair
$(u_{\delta},w_{\delta}) \in L^{2}((0,T),H^{1,2}(\Omega)) \times
C^{0}([0,T],L^{1}(\Sigma_\text{in}))$ of \eqref{system}, we take the complete
metric space $M= C^{0}([0,T],L^{1}(\Sigma_\text{in}))$ with the metric $\|w_{1}
- w_{2}\|_{M} =$ $\max_{0 \leq t \leq T}$ $\int_{\Sigma_\text{in}} |\,w_{1}(t) -
w_{2}(t)\,|\, dx$, construct a mapping $A: M \rightarrow M$ and show that this
mapping is $k$-contractive. The fixed point of the mapping $A$ is the solution
of the ODE-subproblem. Inserting this solution as data into the subproblem of
the Richards equation results in the solution pair of the whole system. For the
sake of readability, we omit the index $\delta$ in this subsection. At first, we
define the two subproblems.

\begin{definition}[Subproblem of the Richards equation]
Let $w_{j-1} \in M \cap L^{\infty}((0,T),L^{2}(\Sigma_\text{in}))$ be given for $j = 1,2,\ldots$. We characterize $u_{j}$ as the solution of the problem 
\begin{alignat}{2}\label{subrich}
\nonumber \partial_{t} \Phi^{-1}(u_{j}) - \divergence (\nabla u_{j} + k(\Phi^{-1}(u_{j})) \cdot {\boldsymbol e} ) &=   f(\Phi^{-1}(u_{j})) \quad &&\mbox{in} \enspace \Omega_{T}, \\
\nonumber -(\nabla u_{j} + k(\Phi^{-1}(u_{j})) \cdot {\boldsymbol e}) \cdot {\boldsymbol \nu} &= \frac{1}{\delta}\,k(\Phi^{-1}(u_{j}))\,(p_j)_{+} \quad &&\mbox{on} \enspace \Sigma_{\textnormal{out}, T},\\
(\nabla u_{j} + k(\Phi^{-1}(u_{j})) \cdot {\boldsymbol e}) \cdot {\boldsymbol \nu} &= 0 \quad &&\mbox{on} \enspace \Sigma_{N, T}, \\
\nonumber (\nabla u_{j} + k(\Phi^{-1}(u_{j})) \cdot {\boldsymbol e}) \cdot {\boldsymbol \nu} &= g(u_{j},w_{j-1}) \quad &&\mbox{on} \enspace \Sigma_{\textnormal{in}, T}, \\
\nonumber \Phi^{-1}(u_{j})(x,0) &= s_{0} \quad &&\mbox{in} \enspace \Omega_{0}, 
\end{alignat}
with
\begin{align*}
g(u_{j},w_{j-1}) &= - \frac{(u_{j})_{+}}{k(1)c} + \frac{w_{j-1}}{c} + \frac{h(u_{j})\psi(w_{j-1})}{c}, \\
h(u_{j}) &= \underset{(u_{j})_{-}}{\overset{0}{\int}} \frac{1}{k(\Phi^{-1}(\tau))}\,d\tau,
 \qquad \psi(w_{j-1})= \min\left\{1,\left(\frac{w_{j-1}}{\sigma}\right)_{+}\right\}.
\end{align*} 
\end{definition}

\begin{definition}[Weak formulation of the subproblem of the Richards equation]\label{weak_sol}
We say that $u_{j} \in L^{2}((0,T),H^{1,2}(\Omega))$ is a weak solution of problem \eqref{subrich}, if $\Phi^{-1}(u_{j}) \in L^{2}((0,T), L^{2}(\Omega))$ and
\begin{multline}\label{weak_rich}
 0 = \int_{\Omega} \Phi^{-1}(u_{j})(0,x)\,\varphi(0,x)\,dx + \int_{\Omega_{T}} \Phi^{-1}(u_{j})\partial_{t}\,\varphi\\
- \int_{\Omega_{T}}(\nabla u_{j} + k(\Phi^{-1}(u_{j}))\cdot {\boldsymbol e}) \cdot \nabla \varphi 
 + \int_{\Omega_{T}}f(\Phi^{-1}(u_{j}))\,\varphi\\
 - \int_{\Sigma_{\text{out}, T}} \frac{1}{\delta} (k(\Phi^{-1}(u_{j}))(p_{j})_{+})\,\varphi +\int_{\Sigma_{\text{in}, T}}\, g(u_{j},w_{j-1})\,\varphi
\end{multline}
for all test functions $\varphi \in L^{2}((0,T), H^{1,2}(\Omega))$ with $\partial_{t}\varphi \in L^{2}((0,T), L^{2}(\Omega))$ and $\varphi(T) = 0.$
\end{definition}

\begin{lemma}[Existence of a unique weak solution]
Let $w_{j-1} \in M \cap L^{\infty}((0,T),L^{2}(\Sigma_\text{in}))$ be given for $j \in \N$, and let the assumptions from Section~\ref{system_ass} and Subsection~\ref{regul} be fulfilled. Then there exists a unique weak solution $u_{j} \in L^{2}((0,T),H^{1,2}(\Omega))$ in the sense of Definition \ref{weak_sol}.
\end{lemma}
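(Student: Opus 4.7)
The plan is to treat \eqref{subrich} as a non-degenerate semilinear parabolic problem in the Kirchhoff-transformed pressure $u_{j}$. Since $k\geq\delta^{2}>0$ by Assumption~\ref{ass2}, the map $\Phi$ is a bi-Lipschitz bijection from $[0,1]$ onto $[\Phi(0),\Phi(1)]$, so $s=\Phi^{-1}(u)$ is globally Lipschitz in $u$. Consequently $k(\Phi^{-1}(u))$, $\rho(\Phi^{-1}(u))_{+}=p_{+}$ and $f(\cdot,\cdot,\Phi^{-1}(u))$ are all globally Lipschitz in $u$, and with $w_{j-1}$ frozen the inflow flux $g(u,w_{j-1})$ is likewise Lipschitz in $u$. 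The whole subproblem thus reduces to a classical nonlinear parabolic equation with mixed Robin-type nonlinear boundary conditions, to which standard Galerkin theory applies.

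For existence I would use a Galerkin approximation $u_{j}^{N}=\sum_{i=1}^{N}\alpha_{i}^{N}(t)\varphi_{i}$ in a smooth basis $\{\varphi_{i}\}$ of $H^{1,2}(\Omega)$ (e.g.\ eigenfunctions of the Neumann Laplacian). The resulting ODE system for $\alpha^{N}$ has locally Lipschitz right-hand side, so Picard--Lindel\"of yields a local solution; global existence and compactness for $N\to\infty$ come from the energy estimate obtained by testing with $u_{j}^{N}$ itself. Introducing the convex primitive $\mathcal{B}(s)\colonequals\int_{0}^{s}\Phi(\tau)\,d\tau$, which satisfies $\partial_{t}\mathcal{B}(s)=u\,\partial_{t}s$, this test gives
\begin{equation*}
\int_{\Omega}\mathcal{B}(s_{j}^{N}(t))\,dx+\int_{0}^{t}\!\!\int_{\Omega}|\nabla u_{j}^{N}|^{2}+\frac{1}{\delta}\int_{\Sigma_{\text{out},t}}k(s_{j}^{N})(p_{j}^{N})_{+}u_{j}^{N}=\int_{\Omega}\mathcal{B}(s_{0}^{\delta})\,dx+(\text{l.o.t.}).
\end{equation*}
The outflow boundary integral is nonnegative, because under Assumption~\ref{ass2} the regularized graphs $\rho_{\delta}$ and $\Phi_{\delta}$ vanish at the same value of $s$, so $(p)_{+}$ and $u$ share support and sign on $\Sigma_\text{out}$. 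The gravity, source and inflow contributions are controlled by Young's and trace inequalities using $w_{j-1}\in L^{\infty}((0,T),L^{2}(\Sigma_\text{in}))$. This yields uniform bounds on $u_{j}^{N}$ in $L^{2}((0,T),H^{1,2}(\Omega))$ and on $s_{j}^{N}$ in $L^{\infty}(\Omega_{T})$, while a dual bound on $\partial_{t}s_{j}^{N}$ is read off the equation. Aubin--Lions then provides a subsequence with strong convergence of $s_{j}^{N}$ in $L^{2}(\Omega_{T})$ and, after a further extraction using trace compactness, strong convergence of traces on $\Sigma_{\text{in},T}$ and $\Sigma_{\text{out},T}$; all Lipschitz nonlinearities pass to the limit, producing a weak solution in the sense of Definition~\ref{weak_sol}.

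For uniqueness, take two weak solutions $u^{1},u^{2}$ and formally test the difference of their weak formulations with $u^{1}-u^{2}$. The monotonicity of $\Phi^{-1}$ together with $(\Phi^{-1})'\geq c(\delta)>0$ yields $(\Phi^{-1}(u^{1})-\Phi^{-1}(u^{2}))(u^{1}-u^{2})\geq c(\delta)(u^{1}-u^{2})^{2}$, so after integrating by parts in time the first term controls $\tfrac{d}{dt}\|u^{1}-u^{2}\|_{L^{2}(\Omega)}^{2}$; the diffusion contributes $\|\nabla(u^{1}-u^{2})\|_{L^{2}(\Omega)}^{2}$; the convection, source and inflow Robin terms are Lipschitz in $u$ (with $w_{j-1}$ fixed), and the regularized outflow flux $\tfrac{1}{\delta}k(\Phi^{-1}(u))\rho(\Phi^{-1}(u))_{+}$ is even monotone non-decreasing in $u$, so its difference has the favourable sign. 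All remaining differences are absorbed via the trace inequality and Young, and a standard Gr\"onwall argument closes the estimate. The main technical obstacle I expect lies not in the PDE itself but in the passage to the limit in the two nonlinear boundary fluxes: both $\tfrac{1}{\delta}k(s)(p)_{+}$ and $g$ involve compositions of Lipschitz maps with the trace of $u_{j}^{N}$, so their identification in the limit hinges on upgrading Aubin--Lions to strong convergence of the traces on $\Sigma_{\text{in},T}\cup\Sigma_{\text{out},T}$, which is where the $C^{1}$-regularity of $\partial\Omega$ and the matched zeros of $\rho_{\delta}$ and $\Phi_{\delta}$ are genuinely used.
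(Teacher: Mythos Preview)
Your approach is correct. The paper's own proof consists of a single sentence referring to Lady\v{z}enskaja--Solonnikov--Ural'ceva \cite{LadySoloUral00}, so you are essentially supplying what the citation stands in for: the regularized subproblem is a uniformly parabolic quasilinear equation with Lipschitz lower-order terms and monotone nonlinear Robin data, and the classical Galerkin/energy/Aubin--Lions machinery applies. Your observation that $\rho_{\delta}$ and $\Phi_{\delta}$ share their zero (so the outflow boundary term $k(s)(p)_{+}u$ is nonnegative) is exactly the structural point that makes the energy estimate close; this is implicit in the paper's setup but not spelled out.

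Two small remarks. First, in the uniqueness step you test the difference of the weak formulations with $u^{1}-u^{2}$; since Definition~\ref{weak_sol} puts the time derivative on the test function, this needs $\partial_{t}(u^{1}-u^{2})\in L^{2}$, which you have not yet established at that point. The standard remedy (Steklov averaging, or first proving $\partial_{t}u\in L^{2}$ from the non-degeneracy $\Phi_{\delta}'\geq \delta^{2}/c_{0}$ as in the subsequent corollary) handles this, but it should be mentioned. Second, your claim $(\Phi^{-1})'\geq c(\delta)>0$ requires $\Phi_{\delta}'$ bounded \emph{above}, i.e., $\rho_{\delta}'$ bounded; this follows from $\rho_{\delta}$ being piecewise $C^{1}$ on the compact interval $[0,1]$ (Assumption~\ref{ass2}), which is worth making explicit. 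Neither point is a genuine gap.
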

\begin{proof}
A proof can be found, e.g., in \cite{LadySoloUral00}.
\end{proof}

\begin{definition}[ODE subproblem]
 Let $w_{j-1} \in M\cap L^{\infty}((0,T),L^{2}(\Sigma_\text{in}))$ and $u_{j} \in L^{\infty}(\Omega_{T}) \cap L^{2}((0,T),H^{1,2}(\Omega))$ be given for $j = 1,2,\ldots$. We characterize $w_{j}$ as the solution of the problem
\begin{alignat}{2}
\nonumber\partial_{t}w_{j} &= r - g(u_{j},w_{j},w_{j-1})) &\qquad&\mbox{in} \enspace \Sigma_{\textnormal{in}, T} \\[-1.5ex]
&\label{subode}\\[-1.5ex]
\nonumber w_{j} &= w_{0}  &&\mbox{on} \enspace \Sigma_{\textnormal{in}, 0},
\end{alignat}
with
\begin{align*}
g(u_{j},w_{j},w_{j-1}) &= - \frac{(u_{j})_{+}}{k(1)c} + \frac{w_{j}}{c} + \frac{h(u_{j})\psi(w_{j-1})}{c}, \\
h(u_{j}) &= \underset{(u_{j})_{-}}{\overset{0}{\int}} \frac{1}{k(\Phi^{-1}(\tau))} d\tau,
 \qquad \psi(w_{j-1})= \min\left\{1,\left(\frac{w_{j-1}}{\sigma}\right)_{+}\right\}.
\end{align*} 
\end{definition}

\begin{lemma}[Existence of a unique solution of the ODE subproblem]
Let $w_{j-1} \in M\, \cap L^{\infty}((0,T),L^{2}(\Sigma_\text{in}))$ and $u_{j} \in L^{\infty}(\Omega_{T}) \cap L^{2}((0,T),H^{1,2}(\Omega))$ be given for $j \in \N$, and let the assumptions from Section~\ref{system_ass} and Subsection~\ref{regul} be fulfilled. Then there exists a unique solution $w_{j} \in M$ of the initial value problem~\eqref{subode}.
\end{lemma}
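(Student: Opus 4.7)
The plan is to exploit the fact that, after rearrangement, \eqref{subode} is a pointwise \emph{linear} ODE in the unknown $w_j$. Moving the $w_j$-dependent piece of $g(u_j,w_j,w_{j-1})$ to the left yields, for a.e.\ $x\in\Sigma_\text{in}$, the scalar Cauchy problem
\[
\partial_t w_j(x,t) + \tfrac{1}{c}\, w_j(x,t) \;=\; F(x,t),\qquad w_j(x,0) \;=\; w_0(x),
\]
where
\[
F(x,t) \colonequals r(x,t) + \frac{(u_j)_+(x,t)}{k(1)\,c} \;-\; \frac{h(u_j)(x,t)\,\psi(w_{j-1})(x,t)}{c}.
\]
An integrating factor $e^{t/c}$ will solve this explicitly fibrewise in $x$, so the task reduces to checking that $F$ is a reasonable datum and that the resulting $w_j$ actually lies in $M = C^{0}([0,T],L^{1}(\Sigma_\text{in}))$.

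First I would verify that $F\in L^{\infty}(\Sigma_{\text{in},T})$. The source $r$ is bounded by the assumptions of Section~\ref{system_ass}. The trace of $u_j$ on $\Sigma_{\text{in},T}$ exists thanks to $u_j \in L^{2}((0,T),H^{1,2}(\Omega))$ and inherits an essential bound from $u_j \in L^{\infty}(\Omega_{T})$. For the fixed regularization level the permeability satisfies $k \ge \delta^{2}>0$, so $\tau\mapsto 1/k(\Phi^{-1}(\tau))$ is bounded by $\delta^{-2}$; consequently $h$ is globally Lipschitz with constant $\delta^{-2}$, and $h(u_j)$ is bounded by $\delta^{-2}\|u_j\|_{L^{\infty}(\Omega_{T})}$. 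Finally, $\psi$ takes values in $[0,1]$ by construction, so $F$ is a bounded measurable function on $\Sigma_{\text{in},T}$.

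Second, I would write down the explicit solution
\[
w_j(x,t) \;=\; e^{-t/c}\, w_0(x) + \int_0^t e^{-(t-\tau)/c}\, F(x,\tau)\, d\tau,
\]
which is well defined for a.e.\ $x\in\Sigma_\text{in}$ and every $t\in[0,T]$ and which lies in $L^{\infty}(\Sigma_{\text{in},T})$ since $w_0 \in L^{\infty}(\Sigma_\text{in})$. For a.e.\ fixed $x$ the map $t\mapsto w_j(x,t)$ is continuous directly from the formula, and the uniform bound on $F$ together with the finiteness of $|\Sigma_\text{in}|$ lets one invoke dominated convergence to upgrade this to continuity of $t\mapsto w_j(\cdot,t)\in L^{1}(\Sigma_\text{in})$, i.e.\ $w_j\in M$. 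Uniqueness is immediate: two solutions differ by a function $v$ satisfying $\partial_t v + v/c = 0$ with $v(0)=0$, forcing $v\equiv 0$.

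The only mildly delicate point in the whole argument is extracting the boundary trace of $u_j$ on $\Sigma_{\text{in},T}$ with the appropriate essential bound; once this is in hand the lemma reduces to an elementary application of the linear ODE solution formula fibrewise over $\Sigma_\text{in}$. No compactness or fixed-point machinery is required at this stage --- those tools will be reserved for the subsequent step, in which the full coupling between this ODE subproblem and the Richards subproblem is resolved via Banach's fixed point theorem on the complete metric space $M$.
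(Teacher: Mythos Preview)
Your argument is correct and more explicit than the paper's. The paper simply observes that the right-hand side of \eqref{subode} satisfies the hypotheses of Carath\'eodory's existence theorem for ODEs and leaves it at that; the linearity in $w_j$ is not singled out, and no solution formula is written down. Your approach exploits the crucial structural observation that the only $w_j$-dependence in $g(u_j,w_j,w_{j-1})$ is the linear term $w_j/c$, which reduces the problem to a scalar linear first-order equation solvable by an integrating factor. This buys you an explicit representation of $w_j$, an immediate $L^\infty(\Sigma_{\text{in},T})$ bound, and uniqueness for free, whereas the Carath\'eodory route still needs a Lipschitz condition in the state variable to conclude uniqueness (trivially satisfied here, but not stated in the paper). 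The trace issue you flag as ``mildly delicate'' is indeed the only point requiring care; in the regularized setting at hand it is unproblematic since the a~priori bounds for fixed $\delta$ (recorded in the paper as a corollary) already guarantee $h_\delta(u_\delta)\in L^\infty(\Sigma_T)$.
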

\begin{proof}
It can be easily proved that the right-hand side of the ODE \eqref{subode} fulfills the assumptions of the Carath{\'e}odory's existence theorem for ODEs, which in turn yields the claim. 
\end{proof}

We proceed with some estimates for the functions $u_{j}$ and $w_{j}$. 

\begin{coro}[A priori bounds for fixed $\delta$]
Under the assumptions from Section~\ref{system_ass} and Subsection~\ref{regul} and with a constant $C>0$, the following estimates for fixed $\delta > 0$ and arbitrary index $j$ hold true:
\begin{enumerate}
\item $s_{\delta} \in [0,1], \quad s_{\delta} \in L^{2}([0,T],H^{1,2}(\Omega)), \quad \int_{\Omega_{T}} |\partial_{t}s_{\delta}|^{2} \leq {C},$
\item $u_{\delta} \in L^{\infty}(\Omega_{T})\cap L^{2}([0,T],H^{1,2}(\Omega)),\quad \int_{\Omega_{T}} |\partial_{t}u_{\delta}|^{2} \leq {C},$
\item $\|\divergence(\nabla u_{\delta} + k_{\delta}(s_{\delta})\cdot {\boldsymbol e})\|_{L^{2}(\Omega_{T})} \leq {C},$
\item $p_{\delta}(s_{\delta}) \in L^{\infty}(\Omega_{T}), \quad p_{\delta}'(s_{\delta}) \in  L^{\infty}(\Omega_{T}),$
\item $h_{\delta}(u_{\delta}) \in L^{\infty}(\Sigma_{T}),$
\item $\Phi_{\delta}(s_{\delta})$ is Lipschitz continuous in $s_{\delta}$.
\end{enumerate}
\end{coro}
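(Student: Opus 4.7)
The plan is to establish the six estimates in turn, repeatedly exploiting that for fixed $\delta>0$ the whole regularized system is uniformly parabolic: $k_\delta\geq\delta^2$ on $[0,1]$, while $\rho_\delta$ and $\rho_\delta'$ are bounded on $[0,1]$, so $\Phi_\delta'(s)=k_\delta(s)\rho_\delta'(s)$ is bounded above and below by positive constants depending on $\delta$. Throughout, rigorous testing is understood to be carried out on a Galerkin-in-space approximation, with the bounds passing to the weak limit.

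First I would obtain $s_\delta\in[0,1]$ by a Stampacchia-type maximum-principle argument. Testing the weak form of \eqref{regsys} with $\varphi=(u_\delta-\Phi_\delta(1))_+$, which vanishes on $\{s_\delta\leq 1\}$ and is zero at $t=0$ since $s_0^\delta\leq 1$, turns the diffusion term into $\int|\nabla\varphi|^2$; the gravity contribution $k_\delta(s_\delta){\boldsymbol e}\cdot\nabla\varphi$ is absorbed by Young's inequality, the source is nonpositive because $f(\cdot,\cdot,1)\leq 0$ and $f$ is monotonically decreasing in $s$, the outflow Robin term $\delta^{-1}k_\delta(s_\delta)(p_\delta)_+\varphi\geq 0$ has the favorable sign, and the infiltration term is controlled because the parallel ODE estimate yields $w_\delta\in L^\infty$. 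The time-derivative part $\int\partial_t s_\delta\,\varphi$ assembles into a total derivative via a convex primitive and therefore forces $u_\delta\leq\Phi_\delta(1)$, i.e.\ $s_\delta\leq 1$. An analogous argument with $(u_\delta-\Phi_\delta(0))_-$ and $f(\cdot,\cdot,0)\geq 0$ produces $s_\delta\geq 0$. Since $\Phi_\delta$ is continuous on $[0,1]$ and hence bounded, the $L^\infty$ bound on $u_\delta$ in (2) drops out.

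The energy estimate is obtained by testing with $u_\delta$ and using the primitive identity $u_\delta\,\partial_t s_\delta=\partial_t F_\delta(s_\delta)$ with $F_\delta'=\Phi_\delta$. Absorbing the gravity term by Young and estimating the trace contributions via the already-available $L^\infty$ bounds on $u_\delta$ and $w_\delta$ and the pointwise bound $h_\delta(u_\delta)\leq\delta^{-2}\|u_\delta\|_{L^\infty}$, Gronwall's inequality produces $u_\delta\in L^2((0,T),H^{1,2}(\Omega))$. Because $\Phi_\delta'\geq c(\delta)>0$, the chain rule $\nabla s_\delta=\nabla u_\delta/\Phi_\delta'(s_\delta)$ transfers this bound to $s_\delta$, finishing (1). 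Differentiating the PDE in time and testing with $\partial_t u_\delta$ yields
\begin{equation*}
\int_\Omega(\Phi_\delta^{-1})'(u_\delta)\,|\partial_t u_\delta|^2+\tfrac12\frac{d}{dt}\int_\Omega|\nabla u_\delta|^2\leq\text{controllable boundary and source terms},
\end{equation*}
which Gronwall closes thanks to the Lipschitz continuity of $k_\delta$ and $g_\delta$ at fixed $\delta$. With $(\Phi_\delta^{-1})'(u)\geq c(\delta)>0$, this gives $\partial_t u_\delta\in L^2(\Omega_T)$, and then $\partial_t s_\delta=(\Phi_\delta^{-1})'(u_\delta)\,\partial_t u_\delta\in L^2(\Omega_T)$. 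The divergence bound (3) is immediate from the PDE $\divergence(\nabla u_\delta+k_\delta(s_\delta){\boldsymbol e})=\partial_t s_\delta-f(s_\delta)$, since both summands on the right lie in $L^2(\Omega_T)$.

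The remaining items are essentially built into the regularization. For (4), $\rho_\delta\in C^0([0,1],\mathbb{R})$ is piecewise $C^1$ with derivative bounded at fixed $\delta$, so $p_\delta=\rho_\delta(s_\delta)$ and $p_\delta'(s_\delta)=\rho_\delta'(s_\delta)$ are in $L^\infty$. For (5), $k_\delta\geq\delta^2$ forces the integrand in $h_\delta$ below $\delta^{-2}$, whence $u_\delta\in L^\infty$ implies $h_\delta(u_\delta)\in L^\infty(\Sigma_{\text{in},T})$. For (6), $\Phi_\delta'=k_\delta\rho_\delta'$ is bounded on $[0,1]$, so $\Phi_\delta$ is Lipschitz in $s$. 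The main technical obstacle I would expect is the rigorous justification of the time-derivative testing in the previous step, in particular the handling of $\partial_t g_\delta(u_\delta,w_\delta)$, which entangles the PDE with the ODE; I would carry out the estimates on the Galerkin approximation where every chain rule is legitimate, and then pass to the limit using the previously obtained bounds.
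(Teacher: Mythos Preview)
Your argument is essentially correct and follows the standard route for uniformly parabolic problems: Stampacchia truncation for the $L^\infty$ bounds, an energy estimate by testing with $u_\delta$, and a second energy estimate by testing with $\partial_t u_\delta$ to control the time derivative, after which (3)--(6) are immediate from the non-degeneracy of the regularized coefficients. The one point that deserves more care is the circularity you flag at the end: in your Stampacchia step you invoke $w_\delta\in L^\infty$, but the ODE for $w_\delta$ involves $u_\delta$, whose bound you are proving. This can be closed by running a coupled Gronwall argument (bound $\sup_t\|(u_\delta-\Phi_\delta(1))_+\|$ and $\sup_t\|w_\delta\|$ simultaneously), or by first establishing a crude $\delta$-dependent $L^\infty$ bound on $u_\delta$ via the comparison principle for uniformly parabolic equations, then feeding it into the ODE.

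The paper, however, takes a different and much shorter route: it simply observes that the Corollary is an immediate consequence of the $\delta$-\emph{independent} a~priori estimates established later in Subsection~\ref{apriori}. Those estimates are weighted (e.g.\ $\int_{\Omega_T} k_\delta\rho_\delta'\,|\partial_t s_\delta|^2\leq C$ and $\int_{\Omega_T} k_\delta(s_\delta)\,|\divergence(\nabla u_\delta+k_\delta(s_\delta){\boldsymbol e})|^2\leq C$), and for fixed $\delta$ the weights $k_\delta\geq\delta^2$ and $\rho_\delta'\geq 1/c_0$ are bounded below, so the unweighted $L^2$ bounds in (1)--(3) follow at once; (4)--(6) are read off from Assumption~\ref{ass2} exactly as you do. Your direct approach has the virtue of being self-contained and logically prior, while the paper's approach avoids duplication by deferring to estimates that are needed anyway for the limit $\delta\to 0$.
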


\begin{proof}
This is a simple conclusion from the estimates for arbitrary $\delta$, which will be derived in Subsection~\ref{apriori}. 
\end{proof}

Now we can define our mapping $A: M \rightarrow M$ as a composition of two mappings $B: M \rightarrow L^{2}((0,T),H^{1,2}(\Omega))$ and $D: M \times L^{2}((0,T),H^{1,2}(\Omega)) \rightarrow M$ with $u_{j} \colonequals Bw_{j-1}$ and $w_{j}\colonequals D(w_{j-1},u_{j})$, where $B$ and $D$ are defined by the subproblems \eqref{subrich} and \eqref{subode}. So $A$ can be defined as $w_{j} = D(w_{j-1},Bw_{j-1}) =: Aw_{j-1}$. We now prove the $k$-contractivity of $A$.

\begin{prop}[Estimate for the boundary conditions and the saturation]\label{bound_est}
Let $w_{j-1}, {\tilde w}_{j-1} \in M\cap L^{\infty}((0,T),L^{2}(\Sigma_\text{in}))$ be two solutions of the initial value problem \eqref{subode} and $u_{j}, {\tilde u}_{j} \in L^{\infty}(\Omega_{T}) \cap L^{2}((0,T),H^{1,2}(\Omega))$ the associated solutions of \eqref{subrich} for $j = 1,2,\ldots$. Furthermore, let the assumptions from Section~\ref{system_ass} and Subsection~\ref{regul} be fulfilled. Then for $0 \leq t \leq T$, $s_{j} = \Phi^{-1}(u_{j})$, ${\tilde s}_{j} = \Phi^{-1}({\tilde u}_{j})$ and a constant $C>0$ the following estimate holds true:
\begin{align}\label{estu}
&\int_{\Omega}|s_{j}(\cdot,t) - {\tilde s_{j}}(\cdot,t)| + \int_{\Sigma_{\text{in},t}} \frac{1}{k(1)c}|(u_{j})_{+} -({\tilde u_{j})}_{+}| + \frac{|\psi({\tilde w}_{j-1})|}{c}|h(u_{j}) - h({\tilde u_{j}})| \\
\nonumber &+\int_{\Sigma_{\text{out},t}} \frac{1}{\delta} |k(s_{j})(p_{+})(s_{j}) - k({\tilde s_{j}})(p_{+})({\tilde s_{j}})|
\leq {C}\|w_{j-1} - {\tilde w_{j-1}}\|_{L^{1}(\Sigma_{\text{in},t})}.
\end{align}
The constant $C$ can be specified as $C = c^{-1} \big(1 + \|h(u_{j})\|_{L^{\infty}(\Sigma_{\text{in},t})}\,L_{\psi}\big)$ where $L_{\psi}$ is the Lipschitz constant of $\psi$.
\end{prop}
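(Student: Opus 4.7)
The plan is to establish \eqref{estu} as a Kru\v{z}kov-type $L^{1}$ contraction: I subtract the weak formulations \eqref{weak_rich} for $u_{j}$ and $\tilde u_{j}$ and test with a smooth monotone approximation $S_{\varepsilon}$ of $\operatorname{sgn}(u_{j}-\tilde u_{j})$, then pass to the limit $\varepsilon\to 0$. The key structural observation is that, since $\Phi^{-1}$ is non-decreasing, $\operatorname{sgn}(u_{j}-\tilde u_{j})=\operatorname{sgn}(s_{j}-\tilde s_{j})$ a.e.; every one of the four absolute-value terms on the left of \eqref{estu} will arise naturally by pairing a nonlinearity that is monotone in the ``own'' variable (in $u$ or in $s$) with this sign function, while the mismatch in $w_{j-1}$ is pushed to the right-hand side via the Lipschitz continuity of $\psi$.

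For the bulk terms, a standard Steklov regularisation makes $\varphi=S_{\varepsilon}(u_{j}-\tilde u_{j})\chi_{[0,t]}$ admissible. The parabolic contribution converges to $\int_{\Omega}|s_{j}(\cdot,t)-\tilde s_{j}(\cdot,t)|$ because the initial data coincide and $\Phi^{-1}$ is non-decreasing, which accounts for the first summand of \eqref{estu}. The diffusion term equals $\int_{\Omega_{t}}S_{\varepsilon}'(u_{j}-\tilde u_{j})|\nabla(u_{j}-\tilde u_{j})|^{2}\geq 0$ and is discarded. The gravity contribution $\int_{\Omega_{t}}(k(s_{j})-k(\tilde s_{j}))\,\boldsymbol e\cdot\nabla S_{\varepsilon}(u_{j}-\tilde u_{j})$ tends to $0$ since $k\circ\Phi^{-1}$ is Lipschitz and the integrand concentrates on $\{|u_{j}-\tilde u_{j}|<\varepsilon\}$. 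Finally, $(f(s_{j})-f(\tilde s_{j}))S_{\varepsilon}(u_{j}-\tilde u_{j})$ is non-positive in the limit because $f$ is non-increasing in $s$, so it may be dropped, yielding a one-sided inequality.

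On $\Sigma_{\text{out},t}$ the boundary integrand $\tfrac{1}{\delta}\bigl(k(s_{j})(p_{j})_{+}-k(\tilde s_{j})(\tilde p_{j})_{+}\bigr)S_{\varepsilon}(u_{j}-\tilde u_{j})$ gives in the limit exactly $\tfrac{1}{\delta}|k(s_{j})(p_{j})_{+}-k(\tilde s_{j})(\tilde p_{j})_{+}|$, because $s\mapsto k(s)(p_{c}(s))_{+}$ is non-decreasing and shares its ordering with $u$. On $\Sigma_{\text{in},t}$ I split
$$g(u_{j},w_{j-1})-g(\tilde u_{j},\tilde w_{j-1})=-\frac{(u_{j})_{+}-(\tilde u_{j})_{+}}{k(1)c}+\frac{\psi(\tilde w_{j-1})(h(u_{j})-h(\tilde u_{j}))}{c}+\frac{w_{j-1}-\tilde w_{j-1}}{c}+\frac{h(u_{j})(\psi(w_{j-1})-\psi(\tilde w_{j-1}))}{c}.$$
Multiplying by $-\operatorname{sgn}(u_{j}-\tilde u_{j})$, the first summand produces $\tfrac{|(u_{j})_{+}-(\tilde u_{j})_{+}|}{k(1)c}$ because $u\mapsto u_{+}$ is non-decreasing, and the second produces $\tfrac{\psi(\tilde w_{j-1})|h(u_{j})-h(\tilde u_{j})|}{c}$ because $u\mapsto h(u)$ is non-increasing (it vanishes for $u\geq 0$ and is decreasing in $u$ for $u<0$) while $\psi\geq 0$; these are the second and third LHS contributions of \eqref{estu}. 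The third summand is bounded above by $|w_{j-1}-\tilde w_{j-1}|/c$, and the fourth by $\|h(u_{j})\|_{L^{\infty}(\Sigma_{\text{in},t})}\,L_{\psi}\,|w_{j-1}-\tilde w_{j-1}|/c$ via the Lipschitz continuity of $\psi$; integrating over $\Sigma_{\text{in},t}$ gives the right-hand side of \eqref{estu} with exactly the stated constant $C$.

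The principal technical obstacle is to legitimise the Kru\v{z}kov test: $\partial_{t}s_{j}$ is only in $L^{2}$, so one must use Steklov averaging (or, equivalently, a parabolic time-mollification as in Alt--Luckhaus) to make $S_{\varepsilon}(u_{j}-\tilde u_{j})$ admissible, and one must verify that the vanishing-gravity argument survives the limit $\varepsilon\to 0$ uniformly in $t\in[0,T]$ using the $H^{1}$-bound from the a~priori estimates of the preceding corollary. A subsidiary point is the monotonicity of $s\mapsto k(s)(p_{c}(s))_{+}$, which must be read in the regularised setting where $p_{c}$ is replaced by the single-valued $\rho_{\delta}$. Once these ingredients are in place, the sign bookkeeping sketched above directly yields \eqref{estu}.
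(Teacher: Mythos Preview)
Your argument is correct and yields exactly \eqref{estu} with the stated constant; the sign bookkeeping on $\Sigma_{\text{in},t}$ is clean, and in the regularised setting (fixed $\delta$) the a~priori bounds $\partial_t s_\delta,\partial_t u_\delta\in L^2(\Omega_T)$ from the corollary legitimise $S_\varepsilon(u_j-\tilde u_j)$ as a test function after the Steklov step. One small imprecision: the gravity term does not simply ``tend to $0$''; you need to combine it with the diffusion term via Young's inequality (or completion of the square) so that the non-negative quadratic part is discarded and only a remainder of order $L_\delta^2\varepsilon$ survives, where $L_\delta$ is the Lipschitz constant of $k_\delta\circ\Phi_\delta^{-1}$. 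This is routine, but worth stating.

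The paper, however, does \emph{not} use the direct Kru\v{z}kov sign-test. It proceeds by duality in the sense of Filo and Otto: one keeps the test function $\varphi$ abstract, puts all derivatives on it, and then solves a classical backward (dual) parabolic problem for $\varphi$ whose boundary data are designed so that the boundary integrals in the weak formulation reappear with absolute values and the correct sign. The comparison is this: your approach is more elementary and transparent---no auxiliary PDE to solve---but it leans on the time regularity $\partial_t u_\delta\in L^2$ that is specific to the regularised problem. The duality route shifts the regularity burden onto the dual solution $\varphi$, which is classically smooth by construction, and therefore works for rougher notions of solution; this robustness is why it is the method of choice in the references the paper cites. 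Here, at fixed $\delta$, both routes are valid and lead to the same estimate.
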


\begin{proof}
To prove this estimate, we use duality techniques, which is done for a similar case in detail, e.g., in \cite{Filo96} and for the case here in \cite{Smetana08}. In short, we put all derivatives onto the test function $\varphi$ and solve classically a dual problem in~$\varphi$, which, loosely speaking, subtracts the boundary conditions appearing in the weak formulation, and adds them in absolute values, arranged with the right algebraic sign for the estimate.
\end{proof}

From the integral formulation of the ODE we get with straightforward modifications
\begin{align}\label{estw}
\int_{\Sigma_\text{in}}|w_{j} - {\tilde w_{j}}| &\leq \frac{1}{k(1)c}
\int_{\Sigma_{\text{in}, t}}|(u_{j})_{+} - ({\tilde u_{j})}_{+}| + \frac{1}{c}
\int_{\Sigma_{\text{in}, t}}  |w_{j} - {\tilde w_{j}}|\\ 
\nonumber & +{C}\,\int_{\Sigma_{\text{in}, t}}|w_{j-1} -{\tilde w_{j-1}}|  + \frac{1}{c}\,\int_{\Sigma_{\text{in}, t}}\psi({\tilde w_{j-1}})\,|h(u_{j}) - h({\tilde u_{j}})|,
\end{align}
with $C$ having been defined in Proposition \ref{bound_est}. This estimate \eqref{estw} together with \eqref{estu} give
\begin{align*}
\int_{\Sigma_\text{in}}\!\!\!|Aw_{j-1} - A{\tilde w_{j-1}}| =\!\!
\int_{\Sigma_\text{in}}\!\!\!|w_{j} - {\tilde w_{j}}|\,
\leq \frac{1}{c} \int_{\Sigma_{\text{in}, t}} \!\!\! |w_{j} - {\tilde w_{j}}|  +
\hat{C}\int_{\Sigma_{\text{in}, T}}\!\!\!|w_{j-1} - {\tilde w_{j-1}}|,
\end{align*}
where $\hat{C} = 2C$. The application of Gronwall's lemma yields
\begin{align*}
\int_{\Sigma_\text{in}}|w_{j} - {\tilde w_{j}}| &\leq \hat{C}
e^{\frac{t}{c}}\,\int_{\Sigma_{\text{in}, T}}|w_{j-1} - {\tilde w_{j-1}}| \leq
\hat{C} e^{\frac{T}{c}}\,\int_{\Sigma_{\text{in}, T}}|w_{j-1} - {\tilde
w_{j-1}}| \\
&\leq T  \hat{C} e^{\frac{T}{c}} \underset{0\leq t\leq
T}{\mathrm{max}}\int_{\Sigma_\text{in}}|w_{j-1} - {\tilde w_{j-1}}|,
\end{align*}
which results in
\begin{align}\label{finalest}
\|Aw_{j-1} - A{\tilde w_{j-1}}\|_{M} \leq T\hat{C} e^{\frac{T}{c}}
\|w_{j-1} - {\tilde w_{j-1}}\|_{M}.
\end{align}
For $T \cdot \hat{C}\cdot e^{\frac{T}{c}} = \theta < 1$, $A$ is obviously $k$-contractive. Since $w_{0} = {\tilde w_{0}}$, \eqref{finalest} provides the uniqueness of the solutions $w_{j}$ and $u_{j}$, too. With Banach Fixed Point Theorem we get existence of a unique solution pair $(u_{\delta},w_{\delta})$ of the regularized system \eqref{regsys}. We summarize this result in the following theorem.

\begin{theorem}\label{existence_reg}
For each $\delta > 0$ there exists a unique solution pair $u_{\delta} \in L^{\infty}(\Omega_{T}) \cap L^{2}((0,T),H^{1,2}(\Omega))$ and $w_{\delta} \in M \cap L^{\infty}((0,T),L^{2}(\Sigma_\text{in}))$ of the regularized system \eqref{regsys}. Furthermore, we have $w_{\delta} \geq 0$ a.e.\ on $\Sigma_\text{in}$ for all $t \in [0,T]$ and each $\delta > 0$.
\end{theorem}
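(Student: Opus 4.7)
The plan is to collect the three ingredients already assembled above into a single fixed‑point argument, then treat the two issues that the paragraph before the theorem glosses over: (i) the smallness condition $T\hat{C}e^{T/c}<1$, which in general restricts $T$, and (ii) the nonnegativity of $w_\delta$. Throughout, $\delta>0$ is fixed and all constants implicitly depend on~$\delta$.

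First I would record the existence and uniqueness of the two subproblem mappings $B\colon M\cap L^{\infty}((0,T),L^{2}(\Sigma_\text{in}))\to L^{\infty}(\Omega_{T})\cap L^{2}((0,T),H^{1,2}(\Omega))$ and $D\colon M\times L^{\infty}\cap L^2 H^{1,2}\to M$ that were established in the two preceding lemmas, so that $A:=D(\cdot,B\,\cdot)$ is well defined on $M\cap L^{\infty}((0,T),L^{2}(\Sigma_\text{in}))$. The a priori bounds of the corollary ensure that the image $Aw$ again lies in $L^{\infty}((0,T),L^{2}(\Sigma_\text{in}))$ so that the composition can be iterated. The contraction estimate \eqref{finalest} is exactly Proposition~\ref{bound_est} combined with \eqref{estw} and Gronwall's lemma, which I would simply cite.

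To remove the restriction $T\hat{C}e^{T/c}<1$, I would choose $T_{0}>0$ small enough that $T_{0}\hat{C}e^{T_{0}/c}=\theta<1$. Banach's fixed point theorem on the complete metric space $M_{T_0}=C^{0}([0,T_{0}],L^{1}(\Sigma_\text{in}))$ then yields a unique fixed point $w_{\delta}|_{[0,T_0]}$ of $A$, and hence a unique pair $(u_{\delta},w_{\delta})$ on $[0,T_{0}]$. By the a priori bounds, $w_{\delta}(\cdot,T_{0})\in L^{\infty}(\Sigma_\text{in})$ and $s_{\delta}(\cdot,T_{0})$ still satisfies the compatibility assumption imposed in Section~\ref{ass_data} (the traces of $p_\delta=\rho_\delta(s_\delta)$ are bounded in $H^{1,2}\cap L^\infty$). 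One can therefore restart the scheme on $[T_{0},2T_{0}]$ with new initial data $(s_{\delta}(T_{0}),w_{\delta}(T_{0}))$ and iterate; the same $T_{0}$ works at each step since $\hat{C}$ and $c$ do not change. Concatenating finitely many local solutions covers $[0,T]$ and uniqueness transfers to the whole interval because it is local.

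Finally, for the nonnegativity $w_{\delta}\geq 0$ I would argue directly from the ODE. Substituting $w_{\delta}=0$ into the flux $g_{\delta}$ gives
\begin{equation*}
g_{\delta}(u_{\delta},0)\;=\;-\frac{(u_{\delta})_{+}}{k_\delta(1)c}+\frac{0}{c}+\frac{h_{\delta}(u_{\delta})\,\psi(0)}{c}\;=\;-\frac{(u_{\delta})_{+}}{k_\delta(1)c}\;\leq\;0,
\end{equation*}
because $\psi(0)=\min\{1,(0)_{+}\}=0$. Hence along any level set $\{w_{\delta}=0\}$ one has
\begin{equation*}
\partial_{t}w_{\delta}\;=\;r-g_{\delta}(u_{\delta},0)\;\geq\;r\;\geq\;0
\end{equation*}
by the assumption $r\geq 0$ from Section~\ref{ass_data}. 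Since $w_{0}\geq 0$, a standard ODE comparison (formally, test the ODE with $-\mathbbm{1}_{\{w_\delta<0\}}$ and use Lipschitz continuity of $w\mapsto w/c-h_\delta(u_\delta)\psi(w)/c$ for $w\leq 0$, where $\psi\equiv 0$) yields $w_{\delta}(\cdot,t)\geq 0$ for a.e.\ $t\in[0,T]$.

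The only genuinely delicate point is the derivation of the contraction estimate~\eqref{estu}, which is cited from Proposition~\ref{bound_est}; the remaining work is bookkeeping. The restart step requires that the a priori bounds in the corollary are $\delta$-uniform only in their structure but hold on each subinterval with the same constants, which is immediate since the coefficient functions and $c,\sigma,k_\delta(1)$ do not depend on time.
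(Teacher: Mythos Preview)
Your proof is correct. The existence and uniqueness part is essentially the paper's argument, with the added observation that the smallness restriction on $T$ can be removed by restarting on subintervals of fixed length $T_0$; the paper leaves this step implicit.

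The genuine difference lies in the nonnegativity argument. The paper proceeds by discretizing the ODE with an explicit Euler scheme, showing by induction that $w^n\geq 0\Rightarrow w^{n+1}\geq 0$ provided the time step $\tau$ obeys the explicit bound~\eqref{estt}, and then passing to the limit $\tau\to 0$. Your route is the direct ODE comparison: since $\psi(w)=0$ for $w\leq 0$, the right-hand side reduces to $r+(u_\delta)_+/(k_\delta(1)c)-w/c\geq -w/c$ on $\{w<0\}$, and testing with $(w_\delta)_-$ (or equivalently with $-\mathbbm{1}_{\{w_\delta<0\}}$) immediately forces $(w_\delta)_-\equiv 0$. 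This is shorter and avoids the discretization machinery. On the other hand, the paper's detour yields the quantitative step-size restriction~\eqref{estt}, which is later exploited in Section~\ref{numerics} as a guideline for time-step control in the numerical scheme; your argument gives no such by-product.
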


\begin{proof}
It remains to prove that $w_{\delta} \geq 0$ a.e.\ on $\Sigma_\text{in}$ for all $t \in [0,T]$. We discretize the regularized system \eqref{regsys} explicitly in time, where we omit the index $\delta$ again. By $w^n$ and $u^{n}$ we denote the time-discrete approximation of $w(\cdot,t_n)$ and $u(\cdot,t_n)$ respectively for a time step $t_n\in [0,T]$, $n=0,1,\ldots,N$, $t_0\colonequals0$, $t_N\colonequals T$ and a time step size $\tau=t_{n+1}-t_n$. We show by means of mathematical induction that $w^{n}(x) \geq 0$ a.e on $\Sigma_\text{in}$ for all $n \in \mathbb{N}$. As $w^0(x) \geq 0$ by Assumptions on the data~\ref{ass_data}, we prove now the induction step $w^{n}(x) \geq 0 \Rightarrow w^{n+1}(x)\geq 0$ a.e. on~$\Sigma_\text{in}$. To this end, we consider for fixed $x \in \Sigma_\text{in}$ the scheme
\begin{align}\label{scheme_ODE}
\frac{w^{n+1}(x) - w^n(x)}{\tau} &= r(x,t_n) + \frac{u^{n}(x)_{+}}{k(1)c} - \frac{w^n(x)}{c} -\frac{h(u^{n}(x))\cdot\min\bigl\{1,\bigl(\frac{w^n(x)}{\sigma}\bigr)_{+}\bigr\}}{c}
\end{align}
for the solution of the ODE \eqref{subode} on $\Sigma_{\text{in},T}$. Please note that $u_{\delta}(x)$ is a solution of a non-degenerate elliptic problem and therefore regular enough to be evaluated on $\Sigma_\text{in}$. Let $w^{n}(x) \geq 0$ a.e on $\Sigma_\text{in}$ be fulfilled. Rearranging \eqref{scheme_ODE} leads to 
\begin{align*}
w^{n+1}(x) &= \left(1 - \frac{\tau}{c} \right)\, w^{n}(x) + \tau\, \underset{\geq 0}{\underbrace{\left(r(x,t_n) + \frac{u^{n}(x)_{+}}{k(1)c}\right)}} -  \tau \left( \frac{h(u^{n}(x))\psi(w^{n}(x))}{c} \right).
\end{align*}
Next we check which values $\tau$ guarantee that $w^{n+1}(x) \geq 0$, too. By simple modifications one gets that the estimate
\begin{equation}\label{estt}
\tau \leq \min\left\lbrace c, \frac{\sigma}{\frac{\sigma}{c} - r(x,t_n) + \frac{h(u^{n}(x))}{c}},\frac{c}{1 + \frac{h(u^{n}(x))}{\sigma}}\right\rbrace\quad\mbox{for}\enspace x\in\Sigma_\text{in}\enspace\mbox{a.e.}
\end{equation}
has to hold true if the second and third entry in the braces are nonnegative---otherwise these terms do not appear in \eqref{estt}. Now, if we let $\tau$ go to zero and use the global estimate
\begin{equation*}
\tau \leq  \underset{t \in [0,T]}{\mathrm{ess}\min} \left \{c, \frac{\sigma}{\frac{\sigma}{c} - r(x,t) 
+ \frac{h(u(x,t))}{c} },\frac{c}{1 + \frac{h(u(x,t))}{\sigma} } \right\}\quad\mbox{for}\enspace x\in\Sigma_\text{in}\enspace\mbox{a.e.},
\end{equation*}
we obtain $w_{\delta} \geq 0$ for all $\delta > 0$. Note that $r \in C^{0}(\bar{\Sigma}_{in,T})$ (\ref{ass_data}) and $u \in L^{\infty}(\Sigma_{in,T})$ imply that the introduced explicit Euler method converges pointwise a.e. on $\Sigma_{in}$. 
\end{proof}

We remark that estimate \eqref{estt} can in principle also be used for numerical simulations in order to guarantee that the numerical solution remains positive. For each time $t_{n}$, using an approximation of $u(\cdot,t_n)$ in \eqref{estt}, an upper bound $Dt$ for $\tau$ can be computed with \eqref{estt} ensuring that the approximation of $w(\cdot,t_{n+1})$ remains nonnegative for all $\tau \leq Dt$. However, numerical tests show that this bound is quite strict, see Section~\ref{numerics}.

\subsection{A priori bounds} \label{apriori}

In this subsection we derive a priori bounds for the solution pair $(u_{\delta},w_{\delta})$ of the regularized system \eqref{regsys} that are independent of $\delta$. Let the assumptions of Section~\ref{system_ass} and Subsection~\ref{regul} be fulfilled.

\begin{lemma}\label{apri}
There exists $C < \infty$ independent of $\delta > 0$ such that, for all $\delta > 0$, 
\begin{equation*}
\int_{\Omega_{T}} k_{\delta}\,\rho_{\delta}'\, |\partial_{t} s_{\delta}|^{2} \leq {C} \quad \mbox{and} \quad \enspace  \int_{\Omega_{T}} k_{\delta}(s_{\delta})\,|\,\divergence(\nabla u_{\delta} + k_{\delta}(s_{\delta}) \cdot {\boldsymbol e})|^{2} \leq {C}.
\end{equation*}
\end{lemma}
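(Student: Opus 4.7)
The plan is to test the weak form of the regularized Richards equation \eqref{regsys} with the time derivative $\partial_t u_\delta$. Since $u_\delta=\Phi_\delta(s_\delta)$ and $\Phi_\delta'(s)=k_\delta(s)\rho_\delta'(s)$, the parabolic term produces the chain-rule identity
\[
\int_{\Omega_T}\partial_t s_\delta\cdot\partial_t u_\delta
=\int_{\Omega_T} k_\delta(s_\delta)\,\rho_\delta'(s_\delta)\,|\partial_t s_\delta|^{2},
\]
which is precisely the quantity we need to bound. After integrating by parts in space on the divergence term, the bulk diffusion yields $\tfrac{1}{2}\partial_t|\nabla u_\delta|^{2}$ and, in time, only the endpoint values; regularity of $u_\delta(\cdot,0)=\Phi_\delta(s_0^\delta)$ in $H^{1,2}(\Omega)$ is secured by the compatibility condition on $p_0$. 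Rigorously this is carried out on a time-discrete (or Galerkin) level for which $\partial_t u_\delta$ is a legal test function; the estimate then passes to the limit.

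The cross term $\int_{\Omega_T} k_\delta(s_\delta)\,\boldsymbol e\cdot\nabla\partial_t u_\delta$ is integrated by parts in time, producing $-\int k_\delta'(s_\delta)\,\partial_t s_\delta\,\boldsymbol e\cdot\nabla u_\delta$ plus endpoint traces of $k_\delta\nabla u_\delta$. The key structural estimate from Assumption~\ref{ass1}, namely $|k_\delta'|^{2}\leq c_0 k_\delta$ together with $\rho_\delta'\geq 1/c_0$, gives
\[
|k_\delta'\,\partial_t s_\delta|\leq\sqrt{c_0}\,\sqrt{k_\delta}\,|\partial_t s_\delta|\leq c_0\,\sqrt{k_\delta\rho_\delta'}\,|\partial_t s_\delta|,
\]
so Young's inequality absorbs a small multiple of $\int k_\delta\rho_\delta'|\partial_t s_\delta|^{2}$ into the left-hand side and leaves a controlled multiple of $\|\nabla u_\delta\|_{L^2L^2}^{2}$, which is bounded by a preliminary Alt--Luckhaus-type energy estimate obtained by testing with $u_\delta$ itself. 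The source term $\int f\,\partial_t u_\delta$ is handled identically, using that $f$ is bounded and $\int k_\delta\rho_\delta'=\int\Phi_\delta'$ is uniformly integrable in $s_\delta$.

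Boundary contributions are treated separately. On $\Sigma_N$ the flux is zero. On $\Sigma_{\mathrm{out},T}$, using $\partial_t u_\delta=k_\delta(s_\delta)\,\partial_t p_\delta$ on the boundary, the Robin term becomes $-\tfrac{1}{\delta}k_\delta^{2}(p_\delta)_{+}\partial_t p_\delta$, which is $-\tfrac{1}{\delta}\partial_t\!\!\int_0^{(p_\delta)_{+}}\!k_\delta^{2}(\rho_\delta^{-1}(q))\,q\,dq$; this is a nonnegative time-derivative and therefore contributes with the good sign after integration in $t$, so the singular factor $1/\delta$ does not hurt. On $\Sigma_{\mathrm{in},T}$, the term $\int g_\delta(u_\delta,w_\delta)\,\partial_t u_\delta$ is integrated by parts in time, the appearing $\partial_t w_\delta$ is replaced by $r-g_\delta(u_\delta,w_\delta)$ via \eqref{regsys}, and the resulting quantities are controlled by the established $L^\infty$ bounds on $u_\delta,w_\delta$ and the Lipschitz dependence of $g_\delta$. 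Collecting these estimates proves the first inequality.

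The second inequality is then immediate from the PDE itself: writing $\divergence(\nabla u_\delta+k_\delta(s_\delta)\boldsymbol e)=\partial_t s_\delta-f(s_\delta)$, multiplying by $k_\delta(s_\delta)$ and using $(a+b)^{2}\leq 2a^{2}+2b^{2}$ gives
\[
\int_{\Omega_T} k_\delta\bigl|\divergence(\nabla u_\delta+k_\delta\boldsymbol e)\bigr|^{2}
\leq 2\int_{\Omega_T} k_\delta|\partial_t s_\delta|^{2}+2\int_{\Omega_T} k_\delta f^{2},
\]
and the bound $k_\delta\leq c_0\,k_\delta\rho_\delta'$ (from $\rho_\delta'\geq 1/c_0$) together with boundedness of $k_\delta$ and $f$ reduces this to the first inequality plus a constant. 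The main obstacle in the whole argument is the outflow boundary: the factor $1/\delta$ would be fatal without recognizing the monotone primitive structure described above, and this is precisely what makes the choice of $\partial_t u_\delta$ (rather than, say, $u_\delta$) the correct test function for this estimate.
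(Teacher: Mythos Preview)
Your approach coincides with the paper's: test the weak formulation with $\partial_t u_\delta$, recognize $\partial_t s_\delta\,\partial_t u_\delta = k_\delta\rho_\delta'|\partial_t s_\delta|^2$, write the boundary integrands as total time derivatives (which is verbatim how the paper describes it), and read off the second estimate from the first via the PDE together with $\rho_\delta'\geq 1/c_0$. Your outflow treatment---identifying a nonnegative primitive in $p_\delta$ that swallows the $1/\delta$ factor---is precisely the right observation.

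One ordering remark. You invoke a ``preliminary Alt--Luckhaus energy estimate'' for $\|\nabla u_\delta\|_{L^2L^2}$ and the ``established $L^\infty$ bounds on $u_\delta,w_\delta$'' to close the gravity and infiltration terms; in the paper these appear only \emph{after} the present lemma (Theorem~\ref{maximumprinciple} and the lemma proving \eqref{grad_u_eq}). There is no circularity, since those results do not rely on Lemma~\ref{apri}, but it is cleaner---and closer to the paper's self-contained sketch---to note that the diffusion term itself puts $\tfrac{1}{2}\|\nabla u_\delta(t)\|_{L^2}^2$ on the left for every $t$, so the $\int_0^t\|\nabla u_\delta\|^2$ produced by Young on the right can be closed by Gronwall without any external input. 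Likewise, several infiltration endpoint contributions at $t=T$ already carry the favourable sign (for instance $-(2k_\delta(1)c)^{-1}(u_\delta)_+^2(T)$ and $c^{-1}\psi(w_\delta)H_\delta(u_\delta)(T)\leq 0$ with $H_\delta'=h_\delta$), which trims what genuinely needs estimating. These are polish points; the substance of your argument matches the paper.
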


\begin{proof}
The first estimate is obtained by testing the weak formulation \eqref{weak_rich} with $\partial_{t} u$, 
using Assumption~\ref{ass2} and writing the integrands of the integrals on the boundary as total 
time derivatives. The second estimate follows directly from the first one.
For a detailed exposition of the proof we refer to \cite{Smetana08} and, in the case of 
Dirichlet boundary conditions on the infiltration boundary, to \cite{Schweizer07}. 
\end{proof}

\begin{theorem}[Maximum principle]\label{maximumprinciple}
There exists $C < \infty$ independent of $\delta$ such that, for all $\delta > 0$ 
\begin{equation*}
\|(u_{\delta})_{+}\|_{L^{\infty}(\Sigma_{\text{in},T})} + \|w_{\delta}\|_{L^{\infty}(\Sigma_{\text{in},T})} \leq C, \quad \mbox{and} \enspace \|u_{\delta}\|_{L^{\infty}(\Omega_{T})} \leq C.
\end{equation*}
Furthermore, there exists $p_{max} < \infty$ independent of $\delta$ such that $p_{\delta}(x,t) \leq p_{max}$ for almost all $(x,t) \in \Omega_{T}$ and all $\delta > 0$.
\end{theorem}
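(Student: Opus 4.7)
My plan is to exploit the non-degeneracy of the regularized system to run a coupled maximum principle: the boundary ODE yields a bound on $w_\delta$ in terms of $(u_\delta)_+|_{\Sigma_\text{in}}$, a Stampacchia truncation applied to the PDE yields an $L^\infty(\Omega_T)$ bound on $u_\delta$ in terms of $\|w_\delta\|_{L^\infty(\Sigma_\text{in})}$, and a Gronwall bootstrap closes the two inequalities. A final algebraic step then translates the bound on $u_\delta$ into one on $p_\delta$ via the explicit form of $\kappa^{-1}$ in \eqref{inverseKirchhoff}. The whole argument is carried out uniformly in $\delta$, using that all constants depend on $\delta$ only through $k_\delta(1)\to k(1)>0$ and data controlled by Assumption~\ref{ass2}.

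For the ODE side, since Theorem~\ref{existence_reg} gives $w_\delta\geq 0$, we have $\psi(w_\delta)\geq 0$ and $h_\delta(u_\delta)\psi(w_\delta)\geq 0$, so dropping this nonnegative term in the ODE of \eqref{regsys} produces
\[
\partial_t w_\delta + c^{-1}w_\delta \ \leq \ r + (k_\delta(1)c)^{-1}(u_\delta)_+.
\]
Gronwall in time then gives
\[
\|w_\delta(\cdot,t)\|_{L^\infty(\Sigma_\text{in})} \leq \|w_0\|_\infty + c\|r\|_\infty + k_\delta(1)^{-1}\int_0^t \|(u_\delta)_+(\cdot,\tau)\|_{L^\infty(\Sigma_\text{in})}\,d\tau.
\]
For the PDE side, I would test the weak formulation of \eqref{regsys} with $\varphi=(u_\delta - L)_+$ for a large constant $L>0$. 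On $\Sigma_{\text{out},T}$ the Robin term $-\delta^{-1}k_\delta(s_\delta)(p_\delta)_+(u_\delta-L)_+$ is non-positive; on $\Sigma_{N,T}$ it vanishes; on $\Sigma_{\text{in},T}$, where $u_\delta>L>0$ forces $h_\delta(u_\delta)=0$ and $(u_\delta)_+=u_\delta$, the contribution collapses to $c^{-1}(w_\delta - u_\delta/k_\delta(1))(u_\delta-L)_+$, which is non-positive as soon as $L\geq k_\delta(1)\|w_\delta\|_{L^\infty(\Sigma_{\text{in},T})}$. Using $f(\cdot,\cdot,1)\leq 0$ for the bulk source and absorbing the gravity term $k_\delta(s_\delta)\mathbf{e}\cdot\nabla(u_\delta-L)_+$ by Young's inequality (thanks to the uniform bound $k_\delta\leq k(1)+o(1)$), a standard De Giorgi / Stampacchia iteration forces $(u_\delta-L)_+\equiv 0$ whenever $L$ exceeds a threshold $L_0$ depending only on data and $\|w_\delta\|_{L^\infty}$.

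The main obstacle I anticipate is the bootstrap closure in the presence of the boundary ODE. Feeding the Gronwall bound for $w_\delta$ into the threshold for $L$, one obtains a Volterra-type inequality of the form $U(t)\leq c_0 + c_1\int_0^t U(\tau)\,d\tau$ for $U(t):=\|(u_\delta)_+(\cdot,t)\|_{L^\infty}$, which Gronwall closes on $[0,T]$ with constants independent of $\delta$. A technical subtlety is that the Stampacchia truncation controls a norm on $\Omega_T$, whereas the threshold involves a trace norm on $\Sigma_\text{in}$; this can be handled either by running the De Giorgi iteration directly up to the boundary (in the spirit of \cite{Schweizer07}) or by invoking an anisotropic trace embedding adapted to the parabolic scaling. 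Once $u_\delta\leq M$ has been established uniformly in $\delta$, the formula \eqref{inverseKirchhoff} yields $p_\delta = (u_\delta)_+/k_\delta(1) \leq M/k(1)$ on $\{u_\delta>0\}$ and $p_\delta = -h_\delta(u_\delta)\leq 0$ on $\{u_\delta\leq 0\}$, providing the desired uniform upper bound $p_{\max}$.
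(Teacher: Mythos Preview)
Your bootstrap architecture---bounding $w_\delta$ by a time integral of $(u_\delta)_+$ via the ODE, bounding $(u_\delta)_+$ by $\|w_\delta\|_{L^\infty}$ via the PDE, and closing with Gronwall---mirrors the coupled structure the paper also exploits. The gap is in the PDE step. When you test \eqref{regsys} with $(u_\delta-L)_+$, the parabolic term produces
\[
\int_\Omega G_L(u_\delta(t)),\qquad G_L(u)=\int_L^u (\Phi_\delta^{-1})'(\tau)(\tau-L)_+\,d\tau,
\]
and for $u>0$ one has $(\Phi_\delta^{-1})'(u)=[k_\delta(s_\delta)\rho_\delta'(s_\delta)]^{-1}=O(\delta^2)$ by Assumption~\ref{ass2} (in the example $\rho_\delta'\equiv\delta^{-2}$ near $s=1$). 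Thus $G_L(u)\gtrsim \delta^2(u-L)_+^2$ only, so the parabolic coercivity degenerates as $\delta\to 0$. Your Young's inequality on the gravity term leaves $C|\{u_\delta>L\}|$ on the right-hand side, and a De~Giorgi iteration would then produce a level $L_0$ that scales like $\delta^{-1}$, not a uniform bound. Writing the gravity term as a boundary integral via $\hat K_\delta(u)=\int_L^{u\vee L}k_\delta(\Phi_\delta^{-1})$ does not help either: on parts of $\Sigma_N$ with $\mathbf{e}\cdot\boldsymbol\nu<0$ the resulting contribution is strictly negative and nothing absorbs it. Also, $f(\cdot,\cdot,1)\leq 0$ does not give $f(s_\delta)\leq 0$ on $\{u_\delta>L\}$, since $s_\delta<1$ there; $f$ is merely bounded, which again leaves a positive right-hand side.

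The paper avoids this degeneracy altogether by a \emph{pointwise} argument rather than an energy one. It introduces a further $\varepsilon$-regularization (system~\eqref{smoothsys}) yielding classical solutions $(u_\delta^\varepsilon,w_\delta^\varepsilon)$, constructs a stationary supersolution $u_s$ with $u_s=0$ on $\Sigma_\text{in}$ (which absorbs both gravity and source at once), and proves $u\leq u_s+m_t$ with $m_t=\max_{\overline{\Sigma_{\text{in},t}}}u_+$ by comparison \`a la \cite{AltLuckhaus83}. At the boundary contact point Hopf's lemma links $m_t$ to $w$; the $w$-equation evaluated at its spatial maximum (where $\Delta w\leq 0$) closes the loop and gives $v'(t)\leq C$ for $v(t)=\max_{[0,t]}\max_{\Sigma_\text{in}}w$, hence a $\delta$-uniform bound. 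Passing $\varepsilon\to 0$ transfers the bounds to $(u_\delta,w_\delta)$. The interior bound $\|u_\delta\|_{L^\infty(\Omega_T)}$ and $p_\delta\leq p_{\max}$ are then quoted from~\cite{Schweizer07}. If you want to rescue your route, you would need to replace the level cut $(u_\delta-L)_+$ by comparison with a genuine supersolution that already carries the gravity and source terms; this is essentially what the paper does.
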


For the derivation of a maximum principle for the solution pair $(u_{\delta},w_{\delta})$, we seize ideas of \cite{FiloLuck99}. We can obtain $L^{\infty}$-estimates independent of $\delta$ in the case where the pair $(u_{\delta},w_{\delta})$ can be approximated by smooth functions $(u_{\delta}^{\varepsilon}, w_{\delta}^{\varepsilon})$ which are solutions of a regularized parabolic system. For the sake of simplicity we assume that the boundary of $\Omega$ is of class $C^{2}$. If $\partial \Omega$ is not smooth enough one can proceed as in \cite{FiloLuck99} and require that the elevation of the surface can be smoothly and periodically extended to the whole $\mathbb{R}^{2}$. Furthermore, we presume for the proof of Theorem \ref{maximumprinciple} that $k_{\delta} \in C^{2}([0,1],(0,\infty))$ and  $\rho_{\delta} \in C^{2}([0,1],\mathbb{R})$, otherwise an additional regularization step has to be performed. We consider the parabolic system
\begin{alignat}{2}
\label{smoothsys}
\nonumber u_{\delta}^{\varepsilon} = \Phi_{\delta}(s_{\delta}^{\varepsilon}), \enspace \partial_{t} s_{\delta}^{\varepsilon} &= \divergence ( \nabla u_{\delta}^{\varepsilon} + k_{\delta}(s_{\delta}^{\varepsilon})\cdot {\boldsymbol e} ) + f^{\varepsilon}(s_{\delta}^{\varepsilon})
 &\qquad&\mbox{in} \enspace \Omega_{T}, \\
\nonumber -( \nabla u_{\delta}^{\varepsilon} + k_{\delta}(s_{\delta}^{\varepsilon})\cdot {\boldsymbol e} ) \cdot {\boldsymbol \nu} &= \frac{1}{\delta}k_{\delta}(s_{\delta}^{\varepsilon})\xi^{\varepsilon}(p_{\delta}^{\varepsilon})\chi_{\varepsilon}, &&\mbox{on} \enspace \Sigma_{\text{out}, T},\\
\nonumber (\nabla u_{\delta}^{\varepsilon} + k_{\delta}(s_{\delta}^{\varepsilon}) \cdot {\boldsymbol e})\cdot {\boldsymbol \nu} &= 0 &&\mbox{on} \enspace \Sigma_{N, T}, \\
(\nabla u_{\delta}^{\varepsilon} + k_{\delta}(s_{\delta}^{\varepsilon}) \cdot {\boldsymbol e})\cdot {\boldsymbol \nu} &=-\frac{\xi^{\varepsilon}(u_{\delta}^{\varepsilon})}{k_{\delta}(1)c} +\frac{w_{\delta}^{\varepsilon}}{c}+\frac{h^{\varepsilon}(u_{\delta}^{\varepsilon})\psi^{\varepsilon}(w_{\delta}^{\varepsilon})}{c} &&\mbox{on} \enspace \Sigma_{\text{in}, T},\\
\nonumber s_{\delta}^{\varepsilon}(x,0) &= s_{0,\delta}^{\varepsilon}
&&\mbox{in} \enspace \Omega,\\
\nonumber 
\partial_{t}w_{\delta}^{\varepsilon} -\varepsilon \Delta w_{\delta}^{\varepsilon}&= r^{\varepsilon} + \frac{\xi^{\varepsilon}(u_{\delta}^{\varepsilon})}{k_{\delta}(1)c} - \frac{w_{\delta}^{\varepsilon}}{c} - \frac{h^{\varepsilon}(u_{\delta}^{\varepsilon})\psi^{\varepsilon}(w_{\delta}^{\varepsilon})}{c} &&\mbox{in} \enspace \Sigma_{\text{in}, T}, \\
\nonumber \nabla w_{\delta}^{\varepsilon} \cdot \nu_{2} &= 0 &&\hspace*{-1em}\mbox{on} \enspace \partial \Sigma_\text{in} \times [0,T],\\
\nonumber w_{\delta}^{\varepsilon} (x,0)&= w_{0,\delta}^{\varepsilon}  &&\mbox{in} \enspace \Sigma_\text{in},
\end{alignat}
where $\nu_{2}$ is the outer normal of $\partial \Sigma_\text{in}$ and the regularized functions fulfill the following assumptions. $f^{\varepsilon}(s,\cdot)$ and $f^{\varepsilon}(\cdot,x,t)$ shall be in $C^{2}(\Omega_{T})$ and $C^{2}([0,1])$ respectively, $f^{\varepsilon} \rightarrow f$ uniformly on $[0,1]\times \Omega_{T}$ and $\|f^{\varepsilon}\|_{C^{0}([0,1]\times\Omega_{T})} \leq \|f\|_{C^{0}([0,1]\times\Omega_{T})}$. $r^{\varepsilon}$ shall be in $C^{2}(\Omega_{T})$, $\|r^{\varepsilon}\|_{L^{\infty}(\Omega_{T})} \leq \|r\|_{L^{\infty}(\Omega_{T})}$ and $\|r^{\varepsilon} - r\|_{H^{1,2}(\Omega_{T})} \rightarrow 0$ as $\varepsilon \rightarrow 0.$  Furthermore, we require that $\xi^{\varepsilon} \in C^{2}(\mathbb{R})$ and that $\xi^{\varepsilon}$ is nonnegative and nondecreasing such that $\xi^{\varepsilon}(p_{\delta}^{\varepsilon}) = (p_{\delta})_{+}$ if $p_{\delta} \in \mathbb{R}^{-} \cup (\varepsilon,\infty)$ and $|\xi^{\varepsilon}(p_{\delta}^{\varepsilon})| \leq 1 + \varepsilon^{-1}$ and analog for 
$u_{\delta}$. $\chi^{\varepsilon} \in C_{0}^{\infty}(\Sigma_\text{out})$ is a cut-off function with $\chi^{\varepsilon}=1$ if $\mathrm{dist}(x,\partial \Sigma_\text{out}) > \varepsilon$. Additionally, $h^{\varepsilon}$ shall be in $C^{2}(\mathbb{R})$, $h^{\varepsilon} \geq 0$, $h^{\varepsilon} = 0$ if $u_{\delta}^{\varepsilon}\geq 0$, $h^{\varepsilon \prime} \leq 0$ and $h^{\varepsilon} \rightarrow h$ uniformly on $\mathbb{R}$. Moreover, we require $\psi^{\varepsilon}$ to be in $C^{2}(\mathbb{R})$, to be nonnegative and that $\psi^{\varepsilon} \rightarrow \psi$ uniformly on $\mathbb{R}^{+}$. Finally, the regularized initial values $w_{0,\delta}^{\varepsilon}$ and $s_{0,\delta}^{\varepsilon}$ shall be in $C^{2}(\overline{\Sigma_\text{in}})$ or $C^{3}(\overline{\Omega})$ respectively with $\|w_{0,\delta}^{\varepsilon} - w_{0,\delta}\|_{L^{1}(\Sigma_\text{in})} \rightarrow 0$ and $\|s_{0,\delta}^{\varepsilon}-s_{0,\delta}\|_{L^{2}(\Omega)} \rightarrow 0$ as $\varepsilon \rightarrow 0$. They also have to fulfill 
suitable compatibilty conditions. 
\begin{lemma}[Existence of a smooth solution pair  $(u_{\delta}^{\varepsilon},w_{\delta}^{\varepsilon})$ of system \eqref{smoothsys}]
Suppose that the regularized functions in system \eqref{smoothsys} fulfill the just mentioned assumptions. Then there exists a solution pair $(u_{\delta}^{\varepsilon},w_{\delta}^{\varepsilon})$ with
$
u_{\delta}^{\varepsilon} \in C^{0}(\overline{\Omega_{T}}), \nabla u_{\delta}^{\varepsilon} \in C^{0}(\overline{\Omega_{T}}), w_{\delta}^{\varepsilon} \in C^{0}(\overline{\Sigma_{\text{in},T}}), \Delta w_{\delta}^{\varepsilon} \in C^{0}(\overline{\Sigma_{\text{in},T}}), \partial_{t} w_{\delta}^{\varepsilon} \in C^{0}(\overline{\Sigma_{\text{in},T}}). 
$
Furthermore, we have $s_{\delta}^{\varepsilon} \in C^{0}(\overline{\Omega_{T}})$ and $\nabla s_{\delta}^{\varepsilon} \in C^{0}(\overline{\Omega_{T}})$.
\end{lemma}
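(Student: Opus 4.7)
The plan is to mimic the decoupling-and-fixed-point strategy used for the regularized system \eqref{regsys} in Theorem~\ref{existence_reg}, but now exploit the stronger smoothness of all regularized data together with the added $\varepsilon$-diffusion on $\Sigma_\text{in}$ to upgrade the regularity of the fixed-point components. Concretely, I would introduce a map $A^{\varepsilon}\colon M\to M$ on the same metric space $M=C^{0}([0,T],L^{1}(\Sigma_\text{in}))$ by setting $u_{\delta,j}^{\varepsilon}=B^{\varepsilon}w_{j-1}$ as the solution of the Richards subproblem obtained by freezing $w_{j-1}$ in the infiltration flux, and then $w_{\delta,j}^{\varepsilon}=D^{\varepsilon}(w_{j-1},u_{\delta,j}^{\varepsilon})$ as the solution of the parabolic surface equation with frozen coefficient $\psi^{\varepsilon}(w_{j-1})$. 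Because $k_\delta\ge\delta^2$ and $\rho_\delta'\ge 1/c_0$, the PDE for $u_{\delta,j}^{\varepsilon}$ is uniformly parabolic; because of the $\varepsilon\Delta$-term the surface equation is uniformly parabolic as well.

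For the decoupled Richards subproblem I would appeal to the classical theory of quasilinear parabolic equations with nonlinear Robin boundary conditions (as in Ladyzhenskaya–Solonnikov–Ural'tseva~\cite{LadySoloUral00}, Chapter~V). All nonlinearities $k_\delta,\rho_\delta,\xi^\varepsilon,h^\varepsilon$ and the boundary term $g^{\varepsilon}$ are $C^2$ in their arguments, the coefficients are uniformly elliptic, the source $f^{\varepsilon}$ lies in $C^{2}(\Omega_T\times[0,1])$, the initial datum $s_{0,\delta}^{\varepsilon}\in C^{3}(\overline{\Omega})$, and $\partial\Omega\in C^{2}$. Together with the assumed compatibility conditions on $s_{0,\delta}^{\varepsilon}$, the cited Schauder-type theory produces a classical solution with $s_{\delta}^{\varepsilon}\in C^{2+\alpha,1+\alpha/2}(\overline{\Omega_T})$ and therefore $u_\delta^\varepsilon=\Phi_\delta(s_\delta^\varepsilon)$ and $\nabla u_\delta^\varepsilon$ continuous up to the parabolic boundary. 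In particular the trace of $u_\delta^\varepsilon$ on $\Sigma_\text{in}$ is continuous, which is what the surface equation needs.

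With this trace available, the surface equation
\begin{equation*}
\partial_t w_\delta^\varepsilon - \varepsilon\Delta w_\delta^\varepsilon = r^{\varepsilon}+\tfrac{\xi^\varepsilon(u_\delta^\varepsilon)}{k_\delta(1)c}-\tfrac{w_\delta^\varepsilon}{c}-\tfrac{h^\varepsilon(u_\delta^\varepsilon)\psi^\varepsilon(w_\delta^\varepsilon)}{c}
\end{equation*}
on $\Sigma_\text{in}$, with homogeneous Neumann data on $\partial\Sigma_\text{in}$, is a uniformly parabolic semilinear equation with a $C^2$-nonlinearity in $w_\delta^\varepsilon$ that is Lipschitz (and even bounded below by a linear decay), a continuous forcing term in $t$ and $x$, and $C^{2}$ initial data satisfying compatibility. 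Standard Schauder theory again gives $w_\delta^\varepsilon\in C^{2+\alpha,1+\alpha/2}(\overline{\Sigma_{\text{in},T}})$, in particular $w_\delta^\varepsilon,\Delta w_\delta^\varepsilon,\partial_t w_\delta^\varepsilon\in C^{0}(\overline{\Sigma_{\text{in},T}})$.

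Coupling the two steps, the $L^1$-contraction estimates of Proposition~\ref{bound_est} and~\eqref{estw} apply unchanged (the extra $\varepsilon\Delta w$-term is dissipative, so Gronwall still yields the same bound), so that $A^\varepsilon$ is $k$-contractive on $M$ for $T$ small and can be iterated over time slabs of uniform size to obtain a global fixed point $(u_\delta^\varepsilon,w_\delta^\varepsilon)$. The main technical obstacle I would expect is not the fixed-point argument itself but the compatibility-plus-Schauder bookkeeping on the coupled nonlinear boundary: one has to check that at each iterate the traces of $u_{\delta,j}^{\varepsilon}$ inserted as source in the $w$-equation, and the iterate $w_{j-1}$ inserted in the Robin data of the PDE, are smooth enough (and satisfy the corner/edge compatibility at $t=0$ on $\partial\Sigma_\text{in}\cap\overline{\Sigma_\text{out}}$) for the cited parabolic Schauder estimates to apply. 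This is precisely the reason for the extra regularizations $\xi^\varepsilon$, $\chi^\varepsilon$, $h^\varepsilon$, $\psi^\varepsilon$ and for the $C^{2}$/$C^{3}$-assumptions on the initial data; once these are in place, the regularity claimed in the lemma follows by the standard bootstrap from the assumed smoothness of the iterates.
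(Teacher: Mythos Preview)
Your proposal is correct and, at bottom, rests on the same ingredient as the paper: the classical Schauder theory for quasilinear parabolic problems in Ladyzhenskaya--Solonnikov--Ural'tseva~\cite{LadySoloUral00}. The paper's own proof is a one-line appeal to Section~5 of that reference, asserting that the $\varepsilon$-regularized data satisfy the relevant hypotheses; your decoupling-plus-fixed-point sketch is a reasonable elaboration of how one actually reduces the coupled bulk/surface system to two separate applications of that theory, and the compatibility and bootstrapping issues you flag are precisely what the additional $C^2$/$C^3$ regularizations and the cut-off $\chi^\varepsilon$ are there to absorb.
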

\begin{proof}
The regularized coefficient functions fulfill all necessary assumptions in the respective theorems in Section~5 of\cite{LadySoloUral00}, which yield the required result.
\end{proof}

For the solution $(u_{\delta}^{\varepsilon},w_{\delta}^{\varepsilon})$ we can now derive a maximum principle and obtain $L^{\infty}$-estimates independent of $\varepsilon$ and $\delta$. 
\begin{lemma}[Maximum principle for the approximating solution pair $(u_{\delta}^{\varepsilon},w_{\delta}^{\varepsilon})$]
For the solution pair $(u_{\delta}^{\varepsilon},w_{\delta}^{\varepsilon})$ of~\ref{smoothsys} we have the following inequalities
\begin{equation*}
\|w_{\delta}^{\varepsilon}\|_{L^{\infty}(\Sigma_{\text{in},T})} \leq C \enspace \mbox{and} \enspace \|(u_{\delta}^{\varepsilon})_{+}\|_{L^{\infty}(\Sigma_{\text{in},T})} \leq \max \{\varepsilon, c_{1} \|w_{\delta}^{\varepsilon}\|_{L^{\infty}(\Sigma_{\text{in},T})} + c_{2}\},
\end{equation*}
where the constants $C, c_{1}$ and $c_{2}$ do not depend on $\delta$ and $\varepsilon$.
\end{lemma}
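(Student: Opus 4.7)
The plan is to apply the classical parabolic maximum principle pointwise to the regularized equations, which is legitimate because the $\varepsilon$-regularization renders $(u^\varepsilon_\delta, w^\varepsilon_\delta)$ classical solutions; this follows the strategy of~\cite{FiloLuck99}. The two estimates are coupled through the infiltration flux $g^\varepsilon_\delta$, so I derive them as a linked pair.

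\emph{$L^\infty$-bound for $w^\varepsilon_\delta$.} Pick a maximizer $(x^\ast,t^\ast)\in\overline{\Sigma_{\text{in},T}}$ of $w^\varepsilon_\delta$. If $t^\ast=0$ the bound is immediate from $w_0\in L^\infty(\Sigma_\text{in})$; otherwise either the interior maximum principle (for $x^\ast$ in the interior of $\Sigma_\text{in}$) or Hopf's lemma combined with the homogeneous Neumann condition at $\partial\Sigma_\text{in}$ yields $\partial_t w^\varepsilon_\delta - \varepsilon\Delta w^\varepsilon_\delta \geq 0$ at $(x^\ast,t^\ast)$. Substituting the PDE for $w^\varepsilon_\delta$ and discarding the non-negative term $h^\varepsilon(u^\varepsilon_\delta)\psi^\varepsilon(w^\varepsilon_\delta)/c$ gives
\begin{equation*}
\frac{w^\varepsilon_\delta(x^\ast,t^\ast)}{c} \leq r^\varepsilon(x^\ast,t^\ast) + \frac{\xi^\varepsilon(u^\varepsilon_\delta(x^\ast,t^\ast))}{k_\delta(1)\,c}.
\end{equation*}
With $\xi^\varepsilon(u)\leq u_+ + \varepsilon$ and the $\delta$-uniform lower bound $k_\delta(1)\geq k(1)>0$ this controls $\|w^\varepsilon_\delta\|_{L^\infty}$ by $\|r\|_{L^\infty}$ and $\|(u^\varepsilon_\delta)_+|_{\Sigma_{\text{in},T}}\|_{L^\infty}$.

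\emph{$L^\infty$-bound for $(u^\varepsilon_\delta)_+$ on $\Sigma_\text{in}$.} It suffices to bound the global maximum $M:=\max_{\overline{\Omega_T}} u^\varepsilon_\delta$, and we may assume $M>\varepsilon$. Depending on where a maximizer $(x^\ast,t^\ast)$ sits: at $t^\ast=0$ we are controlled by $\|\Phi_\delta(s^\delta_0)\|_{L^\infty}$, which is $\delta$-uniformly bounded via the compatibility assumption $p_0\in L^\infty(\Omega)$; in the interior of $\Omega$, $\nabla u^\varepsilon_\delta=0$ forces $\nabla s^\varepsilon_\delta=0$ and thus $\divergence(k_\delta(s^\varepsilon_\delta){\boldsymbol e})=0$, so together with $\Delta u^\varepsilon_\delta\leq 0$ and $\partial_t s^\varepsilon_\delta\geq 0$ the PDE reduces to $f^\varepsilon(s^\varepsilon_\delta)\geq 0$, and the monotonicity of $f$ together with $f(\cdot,\cdot,1)\leq 0$ (plus a shift $\tilde u = u^\varepsilon_\delta - \lambda t$ with $\lambda>\|f^\varepsilon\|_{L^\infty}$ in the borderline case $f(\cdot,\cdot,1)=0$) confines $s^\varepsilon_\delta$ to a set on which $\Phi_\delta$ is $\delta$-uniformly bounded; on $\Sigma_{N,T}$ Hopf's lemma combined with the zero-flux condition transfers the extremum to $\overline{\Sigma_\text{in}}\cup\overline{\Sigma_\text{out}}$; on $\Sigma_{\text{out},T}$, Hopf plus the regularized outflow condition~\eqref{regout} forces $\xi^\varepsilon(p^\varepsilon_\delta)\chi^\varepsilon\leq\delta$, yielding an $O(\delta)$ bound on $u^\varepsilon_\delta$ where $\chi^\varepsilon=1$; finally on $\Sigma_{\text{in},T}$, Hopf gives $\nabla u^\varepsilon_\delta\cdot{\boldsymbol\nu}\geq 0$, and since $u^\varepsilon_\delta>\varepsilon>0$ at the maximizer we have $h^\varepsilon(u^\varepsilon_\delta)=0$, so the infiltration Robin condition simplifies to
\begin{equation*}
\frac{\xi^\varepsilon(u^\varepsilon_\delta)}{k_\delta(1)\,c} \leq \frac{w^\varepsilon_\delta}{c} - k_\delta(s^\varepsilon_\delta)\,{\boldsymbol e}\cdot{\boldsymbol\nu},
\end{equation*}
which gives $(u^\varepsilon_\delta)_+\leq c_1\|w^\varepsilon_\delta\|_{L^\infty(\Sigma_{\text{in},T})}+c_2$ with constants $c_1, c_2$ depending only on $k(1)$, $c$ and the geometry of $\partial\Omega$.

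\emph{Main obstacle.} The delicate step is closing the coupled system into $\delta,\varepsilon$-uniform absolute bounds for $M_w:=\|w^\varepsilon_\delta\|_{L^\infty(\Sigma_{\text{in},T})}$, because the natural constants $c_1\sim k_\delta(1)$ from the $u$-inequality and $1/k_\delta(1)$ from the $w$-inequality balance to leading order. Closure is achieved by exploiting the favourable sign of the gravity term $-k_\delta(s){\boldsymbol e}\cdot{\boldsymbol\nu}$ on $\Sigma_\text{in}$ (which renders the effective coupling strictly contractive) together with the strict dissipation $-w/c$ in the $w$-equation via a Gronwall-type time-integrated refinement. The interior case of the $u$-maximum, combined with the non-strict sign condition $f(\cdot,\cdot,1)\leq 0$, is handled by the shifted-function trick to preserve $\delta,\varepsilon$-uniformity of all constants.
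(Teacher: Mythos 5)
Your approach differs fundamentally from the paper's, and it contains a genuine gap precisely at the point you flag as delicate.

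\textbf{Different route.} The paper does not attempt a direct pointwise maximum principle for $u^\varepsilon_\delta$ with a case-by-case classification of where the maximizer sits. Instead it constructs a \emph{stationary barrier function} $u_s\in C^2(\overline\Omega)$ solving an auxiliary elliptic problem with homogeneous Dirichlet data $u_s=0$ on $\Sigma_\text{in}$, zero-flux on $\Sigma_N$, the regularized Signorini flux on $\Sigma_\text{out}$, and a source term $F=\|f\|_{C^0}$. One shows $u_s\geq 0$ and $\|u_s\|_{C^1(\overline\Omega)}\leq C$ uniformly in $\delta,\varepsilon$. Setting $m_t:=\max_{\overline{\Sigma_{\text{in},T}}}u_+$ and $U_s:=u_s+m_t$, the shifted barrier dominates $u$ on $\Sigma_\text{in}$ by construction, and the comparison techniques of Alt--Luckhaus yield $u\leq U_s$ in all of $\overline{\Omega_t}$. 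At a touching point $(x(\hat t),\hat t)\in\overline\Sigma_\text{in}\times[0,t]$, the inequality $\nabla U_s\cdot{\boldsymbol\nu}\leq\nabla u\cdot{\boldsymbol\nu}$ combined with the Robin condition delivers the bound on $\xi^\varepsilon(m_t)$ in terms of $w$. The interior behaviour of $u$ never has to be controlled separately; the barrier handles it wholesale.

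\textbf{The gap in your interior case.} Your argument at an interior maximizer of $u^\varepsilon_\delta$ reduces the PDE to $f^\varepsilon(s^\varepsilon_\delta)\geq 0$, and you infer that $s^\varepsilon_\delta$ is "confined to a set on which $\Phi_\delta$ is $\delta$-uniformly bounded." This inference fails. From $f(s^*)\geq 0\geq f(1)$ and monotonicity you get at best $s^*\leq 1$; that is not enough, since $\Phi_\delta(s)\to\infty$ as $s\to 1$ for fixed $\delta$ and, worse, $\Phi_\delta(1)\to\infty$ as $\delta\to 0$ (because $\rho_\delta(1)\to\infty$ under Assumption~\ref{ass2}). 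Only a bound $s^*\leq 1-\eta$ with $\eta>0$ uniform in $\delta$ would do, and the hypotheses $f(\cdot,\cdot,1)\leq 0$ permit $f\equiv 0$ on a neighbourhood of $s=1$, in which case no such $\eta$ exists. Your shift trick $\tilde u=u-\lambda t$ does not rescue this: at an interior maximum of $\tilde u$ one obtains $\partial_t u\geq\lambda$, but the PDE only gives $\partial_t s\leq \|f^\varepsilon\|_\infty<\lambda$, and the identity $\partial_t u=\Phi_\delta'(s)\,\partial_t s$ cannot produce a contradiction because $\Phi_\delta'(s)=k_\delta(s)\rho_\delta'(s)$ is \emph{unbounded} near $s=1$ (of order $1/\delta^2$). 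So the interior case is not closed with constants uniform in $\delta$, and this is the reason the paper constructs $u_s$ rather than chasing interior maxima.

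\textbf{On the closure.} Your "main obstacle" paragraph attributes the closure to the favourable sign of the gravity term and a strictly contractive coupling. In the paper's proof the mechanism is sharper and simpler: the coefficient $1/(k(1)c)$ in front of $\xi^\varepsilon$ is the \emph{same} in the $u$-inequality (from the Robin condition at the touching point on $\Sigma_\text{in}$) and in the $w$-inequality (from the ODE right-hand side). Plugging the $u$-bound $\frac{\xi^\varepsilon(m_t)}{k(1)c}\leq\frac{v(t)}{c}+c_2$ into the differential inequality $W'(\tau)\leq -\frac{W(\tau)}{c}+\frac{\xi^\varepsilon(m_t)}{k(1)c}+C$ at a time $t$ where $W(t)=v(t)$, the terms $+v(t)/c$ and $-W(t)/c$ cancel \emph{exactly}, leaving $v'(t)\leq C$. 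No Gronwall refinement or gravity-sign argument is needed; the gravity flux contributes only to the harmless constant $c_2$ through $\|u_s\|_{C^1}$.

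In summary: the $w$-part of your argument is essentially sound (modulo the reformulation via $v(t)=\max_\tau\max_x w(x,\tau)$ that the paper uses to treat the supremum rigorously), but the $u$-part cannot be made $\delta$-uniform with the direct case analysis you propose, and the supersolution construction is the essential missing ingredient.
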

\begin{proof}
The proof follows ideas presented in \cite{FiloLuck99}. We omit the indices $\delta$ and $\varepsilon$ in this proof. Let $u_{s} \in C^{2}(\overline{\Omega})$ be the solution of
\begin{alignat*}{2}
\Delta u_{s} + \nabla k(\Phi^{-1}(u_{s}))\cdot {\boldsymbol e} + F&=0 &\qquad&\mbox{in} \enspace \Omega,\\
 u_{s} &= 0  &&\mbox{on} \enspace \Sigma_\text{in},\\
(\nabla u_{s} + k(\Phi^{-1}(u_{s})) \cdot {\boldsymbol e}) \cdot {\boldsymbol \nu} &= 0  &&\mbox{on} \enspace \Sigma_{N},\\
(\nabla u_{s} + k(\Phi^{-1}(u_{s})) \cdot {\boldsymbol e}) \cdot {\boldsymbol \nu} &= -\delta^{-1}\xi^{\varepsilon}(u_{s}) \chi^{\varepsilon}  &&\mbox{on} \enspace \Sigma_\text{out},
\end{alignat*}
where $F = \|f(x,t,s)\|_{C^{0}(\Omega_{T}\times [0,1])}$. Analogous to \cite{FiloLuck99} one can show that $u_{s}$ is nonnegative. Therefore the term $\nabla  k(\Phi^{-1}(u_{s}))$ is unproblematic as $\Phi^{-1}_{\delta}(u_{s}^{\delta})$ tends to zero as $\delta \rightarrow 0$ for $u_{s}^{\delta} \geq 0$. Thus we can bound $\Delta u_{s} + \nabla k(\Phi^{-1}(u_{s}))\cdot {\boldsymbol e}$ and see that hence $u_{s}^{\delta} \rightarrow 0$ as $ \delta \rightarrow 0$ on $\Sigma_\text{out}$. A classical maximum principle yields that $\|u_{s}\|_{C^{1}(\overline{\Omega})} \leq C$, with $C$ independent of $\delta$ and $\varepsilon$.

We fix $t \in (0,T)$, define $m_{t}\colonequals\max_{(x,\tau) \in \overline{\Sigma_{\text{in},T}}}u_{+}(x,\tau)$ and set $U_{s}\colonequals u_{s}(x) + m_{t}$ for $x \in \overline{\Omega}$. With the same techniques as in \cite{AltLuckhaus83} one can prove that
\begin{equation}\label{supersol_1}
u(x,\tau) \leq U_{s}(x) \quad \forall (x,\tau) \in \overline{\Omega_{t}}.
\end{equation} 
We have that 
\begin{equation}\label{supersol_2}
u(x(\hat{t}),\hat{t}) = U_{s}(x(\hat{t})) \enspace \mbox{for some} \enspace (x(\hat{t}),\hat{t}) \in \overline{\Sigma_\text{in}} \times [0,t]
\end{equation}
with $m_{t} = u_{+}(x(\hat{t}),\hat{t})$. We suppose that $m_{t} > 0$, otherwise the proof is easy. Then \eqref{supersol_1} and \eqref{supersol_2} yield $\nabla U_{s}(x(\hat{t}),\hat{t}) \cdot {\boldsymbol \nu} \leq \nabla u(x(\hat{t}),\hat{t}) \cdot {\boldsymbol \nu}$, which in turn leads to
\begin{equation}\label{Abschaetzung_u}
\frac{\xi^{\varepsilon}(m_{t})}{k(1)c} \leq \frac{w(x(\hat{t}),\hat{t})}{c} - (\nabla u_{s}(x(\hat{t})) + k(\Phi^{-1}(u_{s}(x(\hat{t}))) \cdot {\boldsymbol e})\cdot {\boldsymbol \nu} \leq \frac{v(t)}{c} + c_{2},
\end{equation}
where $c_{2}$ does not depend on $\varepsilon$ and $\delta$ and
\begin{equation*}
v(t) \colonequals \underset{\tau \in [0,t]}{\max} W(\tau), \quad W(\tau) \colonequals \underset{x \in \overline{\Sigma_\text{in}}}{\max} w(x,\tau).
\end{equation*}
Then $v$ is nonnegative, nondecreasing and Lipschitz continuous on $[0,T]$. For $W(t) = w(x(t),t)$ with $x(t)$ in $\Sigma_\text{in}$ we have $\Delta w(x(t),t) \leq 0$. At the points where $W(t)$ is differentiable, there holds
\begin{equation}\label{absch_w}
W^{\prime}(\tau) \leq - \frac{W(\tau)}{c} + \|r\|_{L^{\infty}(\Sigma_{\text{in},T})} + \frac{\xi^{\varepsilon}(u(x(\tau),\tau))}{k(1)c} \leq \frac{\xi^{\varepsilon}(m_{t})}{k(1)c} - \frac{W(\tau)}{c} + C.
\end{equation}
At first we assume that $v(t) = W(\tau)$ for some $0 \leq \tau < t$. Since then $v(s) = W(\tau)$ for all $s \in [\tau,t]$, we have $v^{\prime} = 0$ on $(\tau,t)$. Thus let $v(t) = W(t)$ for some $t \in (0,T)$. Then \eqref{Abschaetzung_u} and \eqref{absch_w} lead to $v^{\prime}(t) \leq C$ and hence $v(t) \leq C$ for any $t \in [0,T]$ and a constant $C$ not depending on $\delta$ and $\varepsilon$. Therefore we have 
\begin{equation}\label{L_infty_w}
\|w^{\varepsilon}_{\delta}\|_{L^{\infty}(\Sigma_{\text{in},T})} \leq C
\end{equation}
and with \eqref{Abschaetzung_u} and \eqref{L_infty_w} also
\begin{equation*}
\|(u^{\varepsilon}_{\delta})_{+}\|_{L^{\infty}(\Sigma_{\text{in},T})} \leq \max\{\varepsilon, c_{1} \|w^{\varepsilon}_{\delta}\|_{L^{\infty}(\Sigma_{\text{in},T})} + c_{2}\}.
\end{equation*}

\end{proof}

If we now pass to the limit in $\varepsilon$ in \eqref{smoothsys} and obtain the following estimates. 

\begin{lemma}[$L^{\infty}$-estimates for $u_{\delta}$ and $w_{\delta}$]
For the solution pair $(u_{\delta},w_{\delta})$ of~\eqref{regsys} we have the following inequalities
\begin{align}
\|u_{\delta}\|_{L^{\infty}(\Omega_{T})} + \|w_{\delta}\|_{L^{\infty}(\Sigma_{\text{in},T})} &\leq C, \label{l_inf_1}\\
\|(u_{\delta})_{+}\|_{L^{\infty}(\Sigma_{\text{in},T})} &\leq c_{1}\|w_{\delta}\|_{L^{\infty}(\Sigma_{\text{in},T})} + c_{2}, \label{l_inf_2}
\end{align}
where the constants $C,c_{1}$ and $c_{2}$ do not depend on $\delta$.
\end{lemma}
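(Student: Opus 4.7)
The plan is to derive \eqref{l_inf_1}--\eqref{l_inf_2} by sending the auxiliary parameter $\varepsilon\to 0$ in the smooth system \eqref{smoothsys}, identifying the limit with the unique solution $(u_\delta,w_\delta)$ of the regularized system \eqref{regsys} via Theorem~\ref{existence_reg}, and transferring the $\varepsilon,\delta$-uniform bounds from the preceding maximum-principle lemma through weak-$*$ lower semicontinuity of $\|\cdot\|_{L^\infty}$.

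First, I would upgrade the boundary bounds of the preceding lemma to an interior bound on $u_\delta^\varepsilon$. A careful inspection of that proof shows that the supersolution $U_s=u_s+m_t$ in fact controls $u_\delta^\varepsilon$ pointwise throughout $\overline{\Omega_T}$, not merely on $\Sigma_{\text{in},T}$, with $\|u_s\|_{C^1(\overline{\Omega})}$ independent of $\varepsilon,\delta$. A symmetric subsolution argument, exploiting $s_\delta^\varepsilon\ge 0$ together with the $\delta$-uniform lower bound on $\Phi_\delta(0)$ provided by the third inequality of \eqref{pc_k_estimates} (which forces integrability of $k\,p_c$ near $s=0$), yields the matching lower bound. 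Consequently $\|u_\delta^\varepsilon\|_{L^\infty(\Omega_T)}\le C$ uniformly in $\varepsilon,\delta$.

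Next I would pass to the limit $\varepsilon\to 0$ at fixed $\delta$. Testing the weak form of \eqref{smoothsys} with $u_\delta^\varepsilon$ and with $\partial_t u_\delta^\varepsilon$, as in the proof of Lemma~\ref{apri}, produces $\varepsilon$-uniform bounds on $\|\nabla u_\delta^\varepsilon\|_{L^2(\Omega_T)}$, $\|\partial_t s_\delta^\varepsilon\|_{L^2(\Omega_T)}$ and $\|\partial_t w_\delta^\varepsilon\|_{L^2(\Sigma_{\text{in},T})}$. By Banach--Alaoglu and Aubin--Lions compactness, a subsequence satisfies $u_\delta^\varepsilon\rightharpoonup u_*$ weakly in $L^2(0,T;H^{1,2}(\Omega))$ and weak-$*$ in $L^\infty(\Omega_T)$, $s_\delta^\varepsilon\to s_*$ strongly in $L^2(\Omega_T)$ with a.e.\ convergence, and $w_\delta^\varepsilon\to w_*$ strongly in $C^0([0,T];L^1(\Sigma_\text{in}))$. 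The uniform convergences $\xi^\varepsilon\to(\cdot)_+$, $h^\varepsilon\to h$, $\psi^\varepsilon\to\psi$, together with the cut-off property of $\chi^\varepsilon$, then let me pass to the limit in every nonlinear Robin and outflow term. Hence $(u_*,w_*)$ is a weak solution of \eqref{regsys}; uniqueness from Theorem~\ref{existence_reg} forces $(u_*,w_*)=(u_\delta,w_\delta)$, and in fact convergence of the full sequence.

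With identification in hand, estimate \eqref{l_inf_1} follows from weak-$*$ lower semicontinuity of $\|\cdot\|_{L^\infty(\Omega_T)}$ applied to $u_\delta^\varepsilon\rightharpoonup^* u_\delta$, and from the a.e.\ (and uniform-in-$\varepsilon$) boundedness of $w_\delta^\varepsilon\to w_\delta$; estimate \eqref{l_inf_2} is obtained by sending $\varepsilon\to 0$ in the preceding lemma's boundary bound, noting $\max\{\varepsilon,\cdot\}\to(\cdot)$ and using the already established convergences of the traces. The main obstacle I anticipate is the passage to the limit in the nonlinear boundary fluxes on $\Sigma_{\text{in},T}$ and $\Sigma_{\text{out},T}$: it demands sufficiently strong trace convergence, which ultimately rests on the $\varepsilon$-independent $H^{1,2}$-gradient estimates afforded by the non-degeneracy of the $\delta$-fixed coefficient functions together with the compactness of the boundary trace embedding. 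The uniqueness of the $\delta$-fixed limit problem is indispensable here, since without it different subsequential limits could not be glued into a single identification with $(u_\delta,w_\delta)$.
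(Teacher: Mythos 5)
Your overall strategy (pass to the limit $\varepsilon\to 0$ in the smooth system \eqref{smoothsys}, identify the limit with $(u_\delta,w_\delta)$, and transfer the $\varepsilon$-uniform bounds by lower semicontinuity) matches the paper's outline, and your substitutions---a supersolution/subsolution bound in place of the paper's citation of Lemma~3 in \cite{Schweizer07} for \eqref{en:1}, and identification via uniqueness from Theorem~\ref{existence_reg} rather than an explicit verification of the weak form of the ODE---are minor and in principle workable. (For the lower bound you need no genuine subsolution: monotonicity of $\Phi_\delta$ together with $s_\delta^\varepsilon\geq 0$ and the $\delta$-uniform lower bound on $\Phi_\delta(0)$ guaranteed by Assumption~\ref{ass2} already gives $u_\delta^\varepsilon\geq\Phi_\delta(0)\geq -C$.)

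There is, however, a genuine gap: you nowhere address the regularity of the initial data. The smooth system \eqref{smoothsys} requires $s_{0,\delta}^\varepsilon\in C^3(\overline\Omega)$ and $w_{0,\delta}^\varepsilon\in C^2(\overline{\Sigma_\text{in}})$ with suitable compatibility conditions, while the underlying data of \eqref{regsys} is merely $L^\infty$. This matters because the $\varepsilon$-uniform compactness you invoke rests on the estimate \eqref{en:3}, whose right-hand side contains $\|\partial_t w_\delta^\varepsilon(\cdot,0)\|_{L^1(\Sigma_\text{in})}+\|\nabla w_\delta^\varepsilon(\cdot,0)\|_{L^1(\Sigma_\text{in})}$; this is bounded uniformly in $\varepsilon$ only when the $\varepsilon$-approximation of the initial data converges in $H^{1,2}$, which requires $w_{0,\delta}\in H^{1,2}(\Sigma_\text{in})$ (and similarly for $s_{0,\delta}$). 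The paper therefore runs a two-step approximation: first it establishes \eqref{l_inf_1}--\eqref{l_inf_2} under the extra assumption $s_{0,\delta}\in L^\infty\cap H^{1,2}$, $w_{0,\delta}\in L^\infty\cap H^{1,2}$ with compatibility conditions, and then removes that assumption by approximating the original $L^\infty$ data by $C_0^\infty$-functions $(s_{0,\delta}^\eta,w_{0,\delta}^\eta)$ and invoking the $L^1$-contraction estimates \eqref{estu} and \eqref{finalest} to pass $\eta\to 0$. Your proposal does only the $\varepsilon$-step and silently assumes that the regularized initial data converge strongly enough; without the second reduction the argument does not cover the data class actually allowed in Section~\ref{ass_data}.
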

\begin{proof}
First let us assume that $s_{0,\delta} \in L^{\infty}(\Omega)\cap H^{1,2}(\Omega)$, $w_{0,\delta} \in L^{\infty}(\Sigma_\text{in})\cap H^{1,2}(\Sigma_\text{in})$ and that they fulfill suitable compatibility conditions. Approximating $s_{0,\delta}$ and $w_{0,\delta}$ by the initial conditions of \eqref{smoothsys} with $s_{0,\delta}^{\varepsilon}\rightarrow s_{0,\delta}$ in $H^{1,2}(\Omega)$ and $w_{0,\delta}^{\varepsilon}\rightarrow w_{0,\delta}$ in $H^{1,2}(\Sigma_\text{in})$, we obtain for the solution pair $(u_{\delta}^{\varepsilon},w_{\delta}^{\varepsilon})$ of \eqref{smoothsys} the following a priori estimates
\begin{align}
 \label{en:1} &\|u^{\varepsilon}_{\delta}\|_{L^{\infty}(\Omega_{T})} \leq
C, \quad p_{\delta}^{\varepsilon} \leq p_{max} \enspace \mbox{on} \enspace
\Omega_{T}, \\
& \nonumber| \nabla u^{\varepsilon}_{\delta}\|_{L^{2}(\Omega_{T})} +
\|\partial_{t} u^{\varepsilon}_{\delta}\|_{L^{2}(\Omega_{T})} + \|\partial_{t}
s^{\varepsilon}_{\delta}\|_{L^{2}(\Omega_{T})}\\
& \label{en:2}\hspace*{2em} + \| \mathrm{div} (\nabla u^{\varepsilon}_{\delta} +
k_{\delta}(s^{\varepsilon}_{\delta}) \cdot {\boldsymbol e}
)\|_{L^{2}(\Omega_{T})} + \| \partial_{t} w^{\varepsilon}_{\delta}
\|_{L^{2}(\Sigma_{\text{in},T})} \leq C(\delta),\\
& \nonumber\underset{t \in [0,T]}{\max} \|\partial_{t}
w^{\varepsilon}_{\delta}(\cdot, t)\|_{L^{1}(\Sigma_\text{in})} + \underset{t \in
[0,T]}{\max} \| \nabla w^{\varepsilon}_{\delta}(\cdot , t)
\|_{L^{1}(\Sigma_\text{in})} \\
\label{en:3}&\hspace*{2em}\leq C(\delta) (1 + \|\partial_{t}
w^{\varepsilon}_{\delta}(\cdot, 0)\|_{L^{1}(\Sigma_\text{in})} + \| \nabla
w^{\varepsilon}_{\delta}(\cdot , 0) \|_{L^{1}(\Sigma_\text{in})}),
\end{align}
where all constants $C$ and $p_{max}$ are independent of $\varepsilon$ and the
constants in \eqref{en:1} do not depend on $\delta$ either. Here Lemma~3 in
\cite{Schweizer07} yields \eqref{en:1} and the estimates in \eqref{en:2}
can be obtained by a testing procedure.
 Applying the vanishing viscosity method (see for instance
\cite{BardosLeRouxNedelec}) results in  \eqref{en:3}.
We can extract subsequences $u^{\varepsilon}_{\delta} \rightarrow u_{\delta}$ in $L^{2}(\Omega_{T})$ with $\nabla u^{\varepsilon}_{\delta} \rightharpoonup \nabla u_{\delta}$ in $L^{2}(\Omega_{T})$ and $w^{\varepsilon}_{\delta}(\cdot ,t) \rightarrow w_{\delta}(\cdot ,t)$ in $L^{1}(\Sigma_\text{in})$ with $\partial_{t} w^{\varepsilon}_{\delta}(\cdot ,t) \rightharpoonup w_{\delta}(\cdot ,t)$ in $L^{1}(\Sigma_\text{in})$ for $t \in [0,T]$ as $\varepsilon \rightarrow 0$, where $(u_{\delta},w_{\delta})$ is the solution pair of \eqref{regsys}. To verify that the limit of $w^{\varepsilon}_{\delta}$ fulfills the ODE in \eqref{regsys}, we test with a function in $C^{\infty}_{0}(\Sigma_\text{in})$ and apply the fundamental lemma of calculus of variations to obtain that $\partial_{t}w_{\delta} -r_{\delta} - (u_{\delta})_{+}/(k_{\delta}(1)c) + w_{\delta}/c +(h_{\delta}(u_{\delta})\psi(w_{\delta}))/c = 0$ a.e. on $\Sigma_\text{in}$ for all $t \in [0,T]$.
Thus \eqref{l_inf_1} and \eqref{l_inf_2} follow. Let us now suppose that $(s_{0,\delta},w_{0,\delta})$ are only in $L^{\infty}(\Omega)$ or $L^{\infty}(\Sigma_\text{in})$ respectively. We approximate $(s_{0,\delta},w_{0,\delta})$ by $C^{\infty}_{0}$-functions $(s_{0,\delta}^{\eta},w_{0,\delta}^{\eta})$ such that $s_{0,\delta}^{\eta} \rightarrow s_{0,\delta}$ in $L^{1}(\Omega)$ and $w_{0,\delta}^{\eta}\rightarrow w_{0,\delta}$ in $L^{1}(\Sigma_\text{in})$ as $\eta \rightarrow 0$ and denote the corresponding solution pair of the system \eqref{regsys} $(s_{\delta}^{\eta},w_{\delta}^{\eta}).$ Estimates \eqref{estu} and \eqref{finalest} imply that $w_{\delta}^{\eta} \rightarrow w_{\delta}$ in $C^{0}([0,T];L^{1}(\Sigma_\text{in}))$ and $s_{\delta}^{\eta}(\cdot ,t) \rightarrow s_{\delta}(\cdot ,t)$ in $L^{1}(\Omega)$ for a.e. $t \in [0,T]$ as $\eta \rightarrow 0$. Moreover, \eqref{estu} renders 
$\int_{\Sigma_\text{in}} |(u_{\delta}^{\eta})_{+} - (u_{\delta})_{+}| + \psi(w_{\delta}) | h_{\delta}(u_{\delta}^{\eta}) - h_{\delta}(u_{\delta}) | \rightarrow 0$ as $\eta \rightarrow 0$. Thus a priori estimates \eqref{l_inf_1} and \eqref{l_inf_2}, which hold true for $(u_{\delta}^{\eta}, w_{\delta}^{\eta})$ also hold true for $(u_{\delta} , w_{\delta})$. 
\end{proof}

\begin{remark}\label{s_01}
Note that the parabolic maximum principle yields that $s_{\delta} \in [0,1] \enspace \mbox{a.e.\ in} \enspace \Omega_{T}$
(cf. \cite{Schweizer07} for details). 
\end{remark}

\begin{lemma}
There exists $C < \infty$ independent of $\delta$ such that, for all $\delta > 0$, 
\begin{equation}\label{grad_u_eq}
\int_{\Omega_{T}} k_{\delta}(s_{\delta})|\nabla s_{\delta}|^{2} \leq {C} \quad \mbox{and} \quad u \in L^{2}((0,T),H^{1,2}(\Omega)).
\end{equation}
\end{lemma}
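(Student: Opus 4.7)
The plan is to derive two energy-type estimates uniformly in $\delta$ by testing the weak form of the regularized PDE in \eqref{regsys} with suitable functions, and then to combine them with the Kirchhoff identity $\nabla u_\delta = k_\delta(s_\delta)\,\rho_\delta'(s_\delta)\,\nabla s_\delta$ together with the lower bound $\rho_\delta' \geq 1/c_0$ from Assumption~\ref{ass1}.

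First I would test with $\varphi = u_\delta$. The parabolic term is converted into a total time derivative of the primitive $\Psi_\delta(s)\colonequals\int_0^s \Phi_\delta(\sigma)\,d\sigma$; since $\Phi_\delta$ is monotone and $u_\delta = \Phi_\delta(s_\delta) \in L^\infty(\Omega_T)$ uniformly in $\delta$ by Theorem~\ref{maximumprinciple}, the pointwise bound $|\Psi_\delta(s_\delta)| \leq s_\delta\,|\Phi_\delta(s_\delta)| \leq C$ controls the time-endpoint contributions uniformly. The diffusion term produces precisely $\int_{\Omega_T}|\nabla u_\delta|^2$, while the gravity integral $\int k_\delta\,{\boldsymbol e}\cdot\nabla u_\delta$ is absorbed by Young's inequality into half of this term plus a constant (using boundedness of $k_\delta$ on $[0,1]$). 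The Robin boundary term on $\Sigma_{\text{in},T}$ is bounded by the uniform $L^\infty$ estimates on $u_\delta$, $w_\delta$, and $h_\delta(u_\delta)$, while the outflow contribution $\delta^{-1}\int_{\Sigma_{\text{out},T}} k_\delta\,(p_\delta)_+\,u_\delta$ has a favorable sign because $(p_\delta)_+$ and $u_\delta$ vanish together by the structure of the Kirchhoff transformation, so it can simply be discarded. Together this yields $\|\nabla u_\delta\|_{L^2(\Omega_T)} \leq C$ uniformly, and hence the claim $u \in L^2((0,T),H^{1,2}(\Omega))$ after passing to the limit.

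For the weighted bound, I would test with $\varphi = s_\delta$. The parabolic term now yields $\tfrac{1}{2}\tfrac{d}{dt}\int s_\delta^2$, which is trivially bounded because $s_\delta \in [0,1]$ by Remark~\ref{s_01}. The crucial algebraic observation is
\[
\nabla u_\delta \cdot \nabla s_\delta \;=\; k_\delta(s_\delta)\,\rho_\delta'(s_\delta)\,|\nabla s_\delta|^2 \;\geq\; c_0^{-1}\,k_\delta(s_\delta)\,|\nabla s_\delta|^2,
\]
so the diffusion term already controls the desired quantity from above. The gravity cross-term is split by Young's inequality as $\tfrac{\varepsilon}{2}\int k_\delta|\nabla s_\delta|^2 + C(\varepsilon)$ and absorbed on the left for any $\varepsilon < 2/c_0$. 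The outflow integral $\delta^{-1}\int_{\Sigma_{\text{out},T}} k_\delta\,(p_\delta)_+\,s_\delta$ is nonnegative and remains on the good side, while the inflow and source contributions are bounded by the $L^\infty$ estimates from Theorem~\ref{maximumprinciple}. After absorption this gives $\int_{\Omega_T} k_\delta(s_\delta)\,|\nabla s_\delta|^2 \leq C$ uniformly in $\delta$.

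The main obstacle I anticipate is the careful sign bookkeeping of the outflow boundary integrals: the prefactor $\delta^{-1}$ could in principle blow up as $\delta\to 0$, and the argument hinges on the fact that, thanks to the Kirchhoff transformation, $u_\delta$ and $p_\delta$ share the same sign, so that both outflow Robin contributions appear with a nonnegative sign in the energy identities and can be treated as beneficial dissipation rather than as a problematic source. Beyond this, the uniform bound on $\Psi_\delta(s_0^\delta)$ at $t=0$ relies on the compatibility condition on the initial data from Subsection~\ref{regul}, and the estimates of $g_\delta$ again require the $L^\infty$ bounds on $u_\delta$, $w_\delta$, $h_\delta(u_\delta)$ already established in the maximum principle.
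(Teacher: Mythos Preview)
Your approach is valid and genuinely different from the paper's. The paper does not test with $u_\delta$ and $s_\delta$ directly; instead it follows \cite{Schweizer07} and splits the saturation range at $s=1/2$, testing with the truncations $\eta_-(s_\delta)=\min(s_\delta,1/2)$ and $\eta_+(p_\delta)=(p_\delta-\bar p)_+$ to obtain first the intermediate estimate
\[
\int_{\Omega_T} k_\delta |\nabla p_\delta|^2\,\chi_{\{s_\delta\geq 1/2\}}
+ k_\delta\,\rho_\delta'\,|\nabla s_\delta|^2\,\chi_{\{s_\delta<1/2\}} \leq C,
\]
and only then deduces both claims by combining it with the structural inequalities~\eqref{pc_k_estimates}. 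Your route is more economical: a single full-range test with $u_\delta$ gives the $H^1$ bound, and a single test with $s_\delta$ gives the weighted bound, with no case split and no intermediate estimate. The split buys test functions that are manifestly bounded by fixed constants; in your approach the time-endpoint and boundary terms must be controlled via the maximum principle on $u_\delta$, but since that is already established this costs nothing.

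One correction: the pointwise inequality $|\Psi_\delta(s_\delta)|\leq s_\delta|\Phi_\delta(s_\delta)|$ is false where $\Phi_\delta$ is negative on $[0,s_\delta]$, because then $|\Phi_\delta|$ is decreasing and the bound reverses. What actually gives the conclusion is that $\Phi_\delta(0)$ is uniformly bounded below (a consequence of the third inequality in~\eqref{pc_k_estimates}: $|\Phi_\delta(0)|=\int_0^1 k_\delta\rho_\delta'\leq c_0\int_0^1\sqrt{k_\delta}$), so monotonicity together with $\Phi_\delta(s_\delta)=u_\delta\in L^\infty$ pins $|\Phi_\delta(\sigma)|\leq C$ on all of $[0,s_\delta]$. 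Also note that when you test with $s_\delta$, the inflow contribution $c^{-1}\int h_\delta(u_\delta)\psi(w_\delta)s_\delta$ has the wrong sign, so you cannot discard it; bounding it uniformly requires the structural implication~\eqref{crucial_impli} (carried over to the regularized functions), not merely the $L^\infty$ bounds on $u_\delta$ and $w_\delta$. The paper's truncated test functions face the same need, so this is not a defect of your method.
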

\begin{proof}
To prove \eqref{grad_u_eq}, one first proves for a constant $C$, independent of $\delta$, the estimate
\begin{equation*}
\int_{\Omega_{T}} k_{\delta}(s_{\delta}) |\nabla p_{\delta}|^{2}\chi_{\{s_{\delta}\geq 1/2\}} + k_{\delta}(s_{\delta})\partial_{s}p_{\delta}(s_{\delta})|\nabla s_{\delta}|^{2}\chi_{\{s_{\delta}< 1/2\}} \leq C,
\end{equation*}
where $\chi_D$ denotes the characteristic function of a set $D$. 
To this end, we test the weak formulation \eqref{weak_rich} with suitable test functions. 
For $s_{\delta} < 1/2$ we choose $\eta_{-}(s_{\delta}) = (s_{\delta} - 1/2)_{-} + 1/2$ and 
for $s_{\delta} \geq 1/2$ we pick $\eta_{+}(p_{\delta}) = (p_{\delta} - \overline{p})_{+}$, 
where $ p_{\delta}(0) < \overline{p} \leq p_{\delta}(1/2)$. Theorem \ref{maximumprinciple} 
and Remark \ref{s_01} guarantee the boundedness of $\eta_{-}(s_{\delta})$ and $\eta_{+}(p_{\delta})$. 
For more details we refer to \cite{Smetana08,{Schweizer07}}.
\end{proof}

\subsection{Passing  to the limit in \texorpdfstring{${\boldsymbol \delta}$}{delta}}

\begin{theorem}
Let the assumptions of Section~\ref{system_ass} and Subsection~\ref{regul} be fulfilled and let $(u_{\delta}, w_{\delta})$ be the unique solution pair of the regularized system \eqref{regsys}. Then for a subsequence $\delta \rightarrow 0$ there holds
\begin{align}
s_{\delta} &\overset{*}{\rightharpoonup} s \qquad \,\,\mbox{in} \enspace L^{\infty}(\Omega_{T}),\label{s}\\
u_{\delta} &\rightharpoonup u \qquad \,\mbox{in} \enspace L^{2}([0,T],H^{1,2}(\Omega)),\label{u}\\
w_{\delta} &\rightarrow w \qquad \mbox{in} \enspace C^{0}([0,T], L^{1}(\Sigma_\text{in})),\nonumber
\end{align}
for appropriate limit functions $s:\, \Omega_{T} \rightarrow [0,1], \, u:\, \Omega_{T} \rightarrow \mathbb{R}$ and $w:\, \Sigma_{\text{in},T} \rightarrow \mathbb{R}$. The limits satisfy
\begin{alignat}{2} 
\label{bounda}
\nonumber u \in \Phi(s)\enspace \mbox{in} &\enspace \Omega_{T}, \enspace \partial_{t} s  = \divergence (\nabla u + k(s) \cdot {\boldsymbol e}) + f(s)  &\quad&\mbox{in} \enspace {\mathcal D}^{\prime}(\Omega_{T}),\\
\nonumber- (\nabla u + k(s) \cdot {\boldsymbol e} ) \cdot {\boldsymbol \nu}
&\geq 0 &&\hspace*{-2em}\mbox{on} \enspace \overline{\Sigma}_{\text{out},T} \cup
\Sigma_{N,T}\\
\nonumber u \leq {\tilde \Phi}(1) \enspace &\mbox{and} \enspace k^{2}(s)s - k^{2}(1)1 \leq 0 &&\mbox{on} \enspace \Sigma_{\text{out},T},\\
0 &\leq ((\nabla u + k(s) \cdot {\boldsymbol e})\cdot {\boldsymbol \nu}) \cdot (k^{2}(s)s - k^{2}(1)1) &&\mbox{on} \enspace \Sigma_{\text{out},T},\\
\nonumber (\nabla u + k(s) \cdot {\boldsymbol e})\cdot {\boldsymbol \nu} &= 0 &&\mbox{on} \enspace \Sigma_{N,T},\\
\nonumber(\nabla u + k(s) \cdot {\boldsymbol e})\cdot {\boldsymbol \nu} &= g(u,w) &&\mbox{on} \enspace \Sigma_{\text{in},T},\\
\nonumber s &= s_{0} &&\mbox{in} \enspace \Omega_{0},\\
\nonumber \partial_{t} w &= (r - g(u,w)) &&\mbox{in} \enspace \Sigma_{\text{in},T},\\
\nonumber w &= w_{0} &&\mbox{on} \enspace \Sigma_{\text{in},0},
\end{alignat}
where
\begin{align*}
g(u,w) &= - \frac{u_{+}}{k(1)c} + \frac{w}{c} + \frac{h(u) \psi(w)}{c}, \\
h(u) &= \underset{u_{-}}{\overset{0}{\int}} \frac{1}{k(\Phi^{-1}(\tau))}\, d\tau,
 \qquad \psi(w) = \min \left \{ 1, \left (\frac{w}{\sigma} \right)_{+} \right \}.
\end{align*}
The traces in \eqref{bounda}$_3$--\eqref{bounda}$_7$ exist in the sense of distributions. Furthermore, there holds $w \geq 0$ a.e.\ on $\Sigma_\text{in}$ for all $t \in [0,T]$.
\end{theorem}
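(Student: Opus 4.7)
My plan is to combine the a priori bounds uniform in $\delta$ (Lemma~\ref{apri} and Theorem~\ref{maximumprinciple}) with suitable compactness arguments, extract subsequences, and then pass to the limit in the weak formulation of \eqref{regsys}. The analysis splits naturally into: extraction of subsequences, identification of the limit of the nonlinear interior terms, passage to the limit on $\Sigma_{N,T}$ and $\Sigma_{\text{in},T}$, and finally the more delicate outflow boundary which has to be rewritten in the surrogate Signorini form.

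First I would extract subsequences. Since $s_\delta\in[0,1]$ (Remark~\ref{s_01}), the weak-$*$ convergence \eqref{s} is automatic. The bound $\|u_\delta\|_{L^\infty(\Omega_T)}\le C$ together with the uniform $H^{1,2}$-gradient bound from \eqref{grad_u_eq} gives a subsequence with $u_\delta\rightharpoonup u$ in $L^2((0,T),H^{1,2}(\Omega))$. For $w_\delta$, the bounds on $w_\delta$ in $L^\infty$ and on $\partial_t w_\delta$ in $L^2(\Sigma_{\text{in},T})$ (recall $\partial_t w_\delta = r - g_\delta(u_\delta,w_\delta)$ with $g_\delta$ bounded via the $L^\infty$-control of $u_\delta$ and $h_\delta$) yield equi-continuity in time with values in $L^1(\Sigma_\text{in})$. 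With $L^1$-compactness in space coming from the vanishing-viscosity argument sketched in the preceding subsection, an Arzel\`a--Ascoli argument produces the strong convergence $w_\delta\to w$ in $C^0([0,T],L^1(\Sigma_\text{in}))$.

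Next I would upgrade the convergence of $s_\delta$ to strong a.e. convergence in order to pass to the limit in the nonlinear terms $k(s)$, $f(s)$, $h(u)$, $\psi(w)$, and to identify $u\in\Phi(s)$. For this I would use the uniform bound on $\partial_t s_\delta$ together with $\nabla s_\delta$-information (from the weighted gradient estimate \eqref{grad_u_eq} combined with $\partial_s p_c\ge 1/c_0$ on the region $\{s<1/2\}$) and an Aubin--Lions type argument to obtain $s_\delta\to s$ in $L^1(\Omega_T)$ and a.e.\ in $\Omega_T$. Then $k_\delta(s_\delta)\to k(s)$ strongly since $k_\delta\searrow k$ uniformly; similarly $f(s_\delta)\to f(s)$. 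The inclusion $u\in\Phi(s)$ follows from a Minty-type monotonicity argument applied to the sequence $u_\delta=\Phi_\delta(s_\delta)$, exploiting that $\Phi_\delta\to\Phi$ uniformly on compact subsets of $[0,1)$ and that $u_\delta$ is bounded in $L^\infty$, so the limit pressure respects the multivalued branch at $s=1$.

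With these ingredients the interior PDE, the Neumann condition, and the infiltration condition all pass to the limit by combining weak convergence of $\nabla u_\delta$ with strong convergence of the trace of $u_\delta$ on $\Sigma_{\text{in},T}$ (obtained through the $L^2(H^{1,2})$ bound and compactness of the trace operator, using the gained strong convergence of $s_\delta$), the strong convergence of $w_\delta$, and continuity of the nonlinearity $g$. The coupled ODE on $\Sigma_{\text{in},T}$ passes to the limit by testing with $C_0^\infty$ functions as already indicated in the proof of the preceding $L^\infty$-estimates, and $w\ge 0$ is inherited from $w_\delta\ge 0$ (Theorem~\ref{existence_reg}) by the strong $L^1$-convergence; the initial data are recovered by the continuity in time.

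The main obstacle is the outflow boundary $\Sigma_{\text{out},T}$, where I can only control the Robin penalty $\tfrac{1}{\delta}k_\delta(s_\delta)(p_\delta)_+$ via the energy estimate from Lemma~\ref{apri}. Following the defect-measure strategy of Schweizer~\cite{Schweizer07}, I would view the boundary flux $-({\boldsymbol q}_\delta\cdot{\boldsymbol \nu})\,dS$ on $\Sigma_{\text{out},T}$ as a sequence of nonnegative Radon measures, which by the uniform bound are tight and converge (along a further subsequence) to a nonnegative Radon measure $\mu$. Testing the weak formulation against nonnegative $\varphi$ gives the inequality $-(\nabla u+k(s){\boldsymbol e})\cdot{\boldsymbol \nu}\ge 0$ in distributional sense on $\Sigma_{\text{out},T}$. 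Because $\Phi_\delta(s_\delta)=u_\delta\le \tilde\Phi(1)$ in the limit on the outflow trace (the penalty forces $(p_\delta)_+\to 0$ in $L^1$ via the $1/\delta$ scaling and the uniform $k_\delta(s_\delta)|\mathrm{div}\,{\boldsymbol q}_\delta|^2$ bound), one obtains $u\le\tilde\Phi(1)$ and consequently $k^2(s)s\le k^2(1)$ using monotonicity. The complementarity condition $0\le -({\boldsymbol q}\cdot{\boldsymbol \nu})(k^2(s)s-k^2(1))$ then follows by writing the surrogate product via the identity $k_\delta(s_\delta)(p_\delta)_+\cdot(k_\delta^2(s_\delta)s_\delta-k_\delta^2(1))\ge 0$ and passing to the limit with the help of the measure $\mu$ and the a.e.\ convergence of $s_\delta$; this is exactly the step for which the third inequality in \eqref{pc_k_estimates} is crucial, and the technical heart of the proof.
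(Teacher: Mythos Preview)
Your overall architecture matches the paper's, and the outflow part via defect measures is correctly identified as the hard step to be delegated to~\cite{Schweizer07}. There are, however, two genuine gaps in the compactness stage that the paper handles differently.

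\textbf{Compactness of $s_\delta$ and $u_\delta$.} You invoke Aubin--Lions using ``the uniform bound on $\partial_t s_\delta$'' and ``$\nabla s_\delta$-information''. But none of these bounds is uniform in $\delta$: Lemma~\ref{apri} only gives the \emph{weighted} estimate $\int_{\Omega_T} k_\delta\,\rho_\delta'\,|\partial_t s_\delta|^2\le C$, and \eqref{grad_u_eq} only gives $\int_{\Omega_T} k_\delta(s_\delta)|\nabla s_\delta|^2\le C$. The weight $k_\delta(s_\delta)$ degenerates to $\delta^2$ near $s_\delta=0$, and your appeal to $\partial_s p_c\ge 1/c_0$ does not remove it. Hence a direct Aubin--Lions argument for $s_\delta$ in $L^1(\Omega_T)$ is not available. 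The paper instead proves compactness only \emph{away} from extremal saturation: for each $\varepsilon>0$ it shows (via Riesz) that the cut-off family $\alpha_\varepsilon\cdot\eta_\varepsilon(u_\delta)$ with $\eta_\varepsilon(\xi)=(\xi+\varepsilon)_-$ is compact in $L^2(\Omega_{T-\varepsilon})$, and concludes $\eta_\varepsilon(u_\delta)\to\eta_\varepsilon(u)$ in $L^2(\Omega_T)$. The compactness of $s_\delta$ on $\{s_\delta>\lambda\}$ is obtained analogously. This localized compactness, together with $k(0)=0$ and the affinity of $f(\cdot,x,t)$ near $s=0$, is what actually justifies $k_\delta(s_\delta)\to k(s)$ and $f_\delta\to f$ in $L^2$.

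\textbf{Convergence of $w_\delta$.} Your Arzel\`a--Ascoli argument requires spatial $L^1$-compactness of $w_\delta(\cdot,t)$ uniformly in $\delta$, which you attribute to ``the vanishing-viscosity argument sketched in the preceding subsection''. But those estimates (see \eqref{en:3}) carry constants $C(\delta)$; they were used only for the passage $\varepsilon\to 0$ at fixed $\delta$ and give no $\delta$-uniform spatial regularity of $w_\delta$. The paper proceeds in the opposite order: it first proves $h_\delta(u_\delta)\to h(u)$ and $(u_\delta)_+\to u_+$ in $L^2(\Sigma_{\text{in},T})$ using the cut-off compactness above together with Assumption~\eqref{crucial_impli} (to ensure $h_\delta(u_\delta)$ has a well-defined trace and a $\delta$-uniform bound), and only then inserts these into the ODE for $w_\delta$ and applies Gronwall to conclude $w_\delta\to w$ in $C^0([0,T],L^1(\Sigma_\text{in}))$. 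Your brief ``continuity of the nonlinearity $g$'' skips precisely the step where \eqref{crucial_impli} is essential: without it $h(u)$ need not be integrable on the set $\{s=0\}$, and neither the trace nor the limit is clear.
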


\begin{proof}
The assumptions in Section~\ref{system_ass} and Subsection~\ref{regul} and the a priori bounds of Subsection~\ref{apriori} yield \eqref{s}, \eqref{u}, $s: \Omega_{T} \rightarrow [0,1]$ and $f_{\delta} \rightharpoonup f$ in $L^{2}(\Omega_{T})$. Affinity of $f(x,t,\cdot)$ on $(0,\lambda)$ for small $\lambda$ and the compactness of $s_{\delta}$ on $(\lambda,1]$ results in $f_{\delta}\rightarrow f$ in $L^{2}(\Omega_{T})$. Using again Remark~\ref{s_01}, the assumption $k(0)=0$ and the compactness of $s_{\delta}$ on $(\lambda,1]$ provides $k_{\delta}(s_{\delta}) \rightarrow k(s)$ in $L^{2}(\Omega_{T})$ and $\nabla k_{\delta}(s_{\delta}) \rightharpoonup \nabla k(s)$ in $(L^{2}(\Omega_{T}))^3$ for $\delta \rightarrow 0$. Therefore, we can identify \eqref{bounda}$_{1}$ as the distributional limit of \eqref{regsys}$_{1}$.

\paragraph*{Compactness of the families $s_{\delta}$ and $u_{\delta}$.} 
We outline the ideas of the proof and refer to~\citep{Schweizer07} for more details. As $\Phi_{\delta}^{\prime}$ is unbounded, we only aim at compactness for $u_{\delta}$ away from regions with maximal saturation. To prove compactness  for the familiy $u_{\delta}$, we use a sequence $\varepsilon \rightarrow 0$ and an associated sequence of cut-off functions $\alpha_{\varepsilon} \in C^{\infty}(\Omega,[0,1])$ with $\alpha_{\varepsilon}(x) = 1$ for all $x \in \Omega$ with $\mathrm{dist}(x,\partial \Omega) \geq \varepsilon$. If we consider  $\eta_{\varepsilon}(\xi)\colonequals (\xi + \varepsilon)_{-}$ for fixed $\varepsilon$, it can be shown that the familiy $\alpha_{\varepsilon} \cdot \eta_{\varepsilon}(u_{\delta})$ is compact in $L^{2}(\Omega_{T-\varepsilon})$ and that for a subsequence $\delta\rightarrow 0$ there holds 
\begin{equation}\label{cutoff}
\eta_{\varepsilon}(u_{\delta}) \overset{\delta \rightarrow 0}{\longrightarrow} \eta_{\varepsilon}(u) \enspace \mbox{in} \enspace L^{2}(\Omega_{T}) \enspace \mbox{for any} \enspace \varepsilon > 0.
\end{equation}
Here, the compactness of the familiy $\alpha_{\varepsilon} \cdot \eta_{\varepsilon}(u_{\delta})$ in $L^{2}(\Omega_{T-\varepsilon})$ can be deduced using the Riesz characterization of compact sets. The compactness of the familiy~$s_{\delta}$ away from regions with zero saturation follows similarly.


For the proof of relation \eqref{bounda}$_{2}$, the boundary conditions on the Neumann and the outflow boundary and the initial conditions we refer to~\cite{Schweizer07}. 
We only remark that \eqref{bounda}$_{2-4}$, \eqref{bounda}$_{6}$ and \eqref{bounda}$_{8}$ follow more or less straightforward, while the proof of \eqref{bounda}$_{5}$ is much more challenging and uses defect measures and a compensated compactness argument.
We now turn to the proof of \eqref{bounda}$_7$. The problem we have to cope with on the infiltration boundary is the fact that 
$h(u)$ is not Lebesgue-integrable for $S := \{(x,t) \in \Omega_{T}\, |\, s(x,t) = 0\}$ not being a null set in the sense of Lebesgue. Since we are not able to bound $h_{\delta}(u_{\delta})$ on $S$ independently of $\delta$, $h_{\delta}(u_{\delta})$ is not necessarily Lebesgue-integrable on $S$ either. To prove convergence on the infiltration boundary, we exploit that $p_{-}(x,t) = -h(u(x,t))$  and that the water pressure $p(x,t)$ equals the capillary pressure $p_{c}(s)$. Thanks to Assumption~\ref{ass1}, we know that $(-p)_{+}$ is bounded and therefore that $h(u)$ is bounded, too.

Before analyzing the limiting process for $\delta\to 0$, we show that our boundary condition on $\Sigma_\text{in}$ 
\begin{equation*}
(\nabla u_{\delta} + k_{\delta}(s_{\delta}) \cdot {\boldsymbol e}) \cdot {\boldsymbol \nu} = - \frac{(u_{\delta})_{+}}{k_{\delta}(1)c} + \frac{w_{\delta}}{c}+ \frac{h_{\delta}(u_{\delta})\psi(w_{\delta})}{c}
\end{equation*}
is well-defined in the sense of traces. Since $w_{\delta}$ is only defined on $\Sigma_\text{in}$, we do not have to discuss the terms soley depending on $w_{\delta}$. Due to Theorem \ref{apri} c), the function $(u_{\delta})_{+}$ has a well-defined trace on $\Sigma_{\text{in},T}$, which leaves us the term $h_{\delta}(u_{\delta})$ and, particularly, because of Theorem \ref{apri} e),
\begin{equation*}
h_{\delta}^{\prime}(u_{\delta}) = \frac{(1 - \mbox{sign}(u_{\delta}))}{2\left(k_{\delta}\left(\Phi_{\delta}^{-1}\left(1/2(u_{\delta} - |u_{\delta}|)\right)\right)\right) } .
\end{equation*}
For simplicity, we restrict our analysis to the areas where $u_{\delta} \leq 0$. This is not
a constraint, because $h_{\delta}^{\prime}(u_{\delta})$ is bounded in the complement of these areas in~$\Omega_{T}$ anyway. We arrive at $h_{\delta}^{\prime}(u_{\delta}) \sim 1/\left(k_{\delta}\left(\Phi_{\delta}^{-1}(u_{\delta})\right)\right).$ Obviously, $h_{\delta}^{\prime}(u_{\delta})$ is unbounded if $s_{\delta} = \Phi_{\delta}^{-1}(u_{\delta}) \rightarrow 0.$ As already discussed at the beginning of this paragraph, this would be a contradiction to the fact that $(-p)_{+}$ is bounded due to assumption~\eqref{crucial_impli}. This proves that $h_{\delta}^{\prime}(u_{\delta})$ is in fact bounded and, therefore, that $h_{\delta}(u_{\delta})$ has a well-defined trace.

Since $\nabla h_{\delta}(\eta_{\varepsilon}(u_{\delta}))$ is bounded in $(L^{2}(\Omega_{T}))^3$, we get a subsequence, converging weakly to a limit $\tilde h$. Using \eqref{cutoff} and Assumption \ref{ass1}, one can than easily prove that $\tilde h = \nabla h(\eta_{\varepsilon}(u))$ and $h_{\delta}(\eta_{\varepsilon}(u_{\delta})) \rightarrow h(\eta_{\varepsilon}(u))$ in $L^{2}(\Omega_{T})$. If we now account for the fact that $h_{\delta}(u_{\delta})$ only depends on the negative part of $u_{\delta}$, we get that $\nabla h_{\delta}(u_{\delta}) \rightharpoonup \nabla h(u)$ and $h_{\delta}(u_{\delta}) \rightarrow h(u)$ in $(L^{2}(\Omega_{T}))^3$ and $L^{2}(\Omega_{T})$, respectively, for $\delta \rightarrow 0$. This results in $h_{\delta}(u_{\delta}) \rightarrow h(u)$ in $L^{2}(\Sigma_{\text{in},T})$. The convergence $u_{\delta} \rightarrow u$ in $L^{2}(\Sigma_{\text{in}, T})$ implies directly the convergence $(u_{\delta})_{+} \rightarrow u_{+}$ in $L^{2}(\Sigma_{\text{in}, T})$ for $\delta \rightarrow 0.$ 
Applying these results and Gronwall's Lemma, one gets $w_{\delta} \rightarrow w$ in $C^{0}([0,T], L^{1}(\Sigma_\text{in}))$,  implying that $w_{x}(0) = w_{0}$ and $w \geq 0$ a.e.\ on $\Sigma_\text{in}$ for all $t \in [0,T]$.
\end{proof}

\section{Discretization and convex minimization}
\label{discreteConvex}

In this section we introduce an implicit--explicit time discretization of System~\eqref{system}, followed by a space
discretization with finite elements for the Richards equation. We are bound to this particular discretization in the sense
that it allows for spatial convex minimization problems in the porous medium that on the discrete level can be 
treated by monotone multigrid. We refer to \cite{BerningerKornhuberSander11} for a detailed analytical and numerical study of this approach. Here, we extend the approach in \cite{BerningerKornhuberSander11}, replacing Dirichlet conditions by nonlinear physical Robin conditions. For the treatment of Robin conditions, in which the physical pressure appears without nonlinear deformation, see also~\cite{BerningerSander10}. 
The ODE's for the surface water are treated explicitly. For another example of coupling Richards' equation with a single ODE representing the water height of a lake given by a compartment model, we refer to \cite[Sec.\;4.3]{BerningerDiss}.
See \cite{Bastian_et_al:2012} and \cite{BernKornSand11} for implicit treatments of the coupling of Richards' equation with the shallow water equations, where in the latter article---as in this paper---the coupling is also given by a Robin transmission condition.

\subsection{Time discretization}

We introduce our special implicit--explicit time discretization of System~\eqref{system}, first in a strong and then
in a weak formulation. Special attention will be given to the nonlinear Robin condition on $\Sigma_\text{in}$.
The Signorini-type condition will lead to a variational inequality.


\subsubsection{Strong formulation}

Given $s^n$ in $\Omega$ and $w^n$ as well as $r^{n+1}$ on $\Sigma_\text{in}$ for time steps $t_n$ and $t_{n+1}$, respectively, with $\tau=t_{n+1}-t_n$, we define $(s^{n+1},w^{n+1})$ at time step $t_{n+1}$ as follows.
\begin{alignat}{2}\label{system_time_strong}
u^{n+1} \in \Phi(s^{n+1}),\quad \nonumber s^{n+1}  = & \tau\divergence (\nabla u^{n+1} + k(s^n) \cdot {\boldsymbol e}) + \tau f(s^{n+1})+s^n &\quad&\mbox{in} \enspace \Omega,\\
\nonumber u^{n+1} \leq 0 \enspace \mbox{ and} &\enspace -(\nabla u^{n+1} + k(s^n) \cdot {\boldsymbol e})\cdot {\boldsymbol \nu} \geq 0 &&\hspace*{-.7cm}\mbox{on} \enspace \Sigma_\text{out},\\
0 &= ((\nabla u^{n+1} + k(s^n) \cdot {\boldsymbol e})\cdot {\boldsymbol \nu}) \cdot u^{n+1} &&\hspace*{-.7cm}\mbox{on} \enspace \Sigma_\text{out},\\
\nonumber (\nabla u^{n+1} + k(s^n) \cdot {\boldsymbol e})\cdot {\boldsymbol \nu} &= 0 &&\hspace*{-.7cm}\mbox{on} \enspace \Sigma_{N},\\
\nonumber (\nabla u^{n+1} + k(s^n) \cdot {\boldsymbol e})\cdot {\boldsymbol \nu} &= g(u^{n+1},w^n) &&\hspace*{-.7cm}\mbox{on} \enspace \Sigma_\text{in},\\
\nonumber w^{n+1} &= \tau(r^{n+1} - g(u^{n+1},w^{n}))+w^n &&\hspace*{-.7cm}\mbox{in} \enspace \Sigma_\text{in}.
\end{alignat}

Here, boundary condition \eqref{system_time_strong}$_3,$ \eqref{system_time_strong}$_4$ is the discretization of 
the original Signorini-type condition~\eqref{outflow} which is equivalent to \eqref{system}$_3$--\eqref{system}$_5$.
Boundary condition \eqref{system_time_strong}$_6$ needs special consideration. 
First, we rewrite $g$ on $\Sigma_\text{in}\times\Sigma_\text{in}$ (defined in \eqref{system}) by inserting the inverse 
Kirchhoff transformation $\kappa^{-1}:u\mapsto p$ given in Equations~\eqref{inverseKirchhoff} and obtain
\begin{align*}
-g(u^{n+1},w^{n})+ {{}} \frac{w^n}{c}
&=\frac{{{}} u^{n+1}_{+}}{k(1)c} - \frac{{{}} h(u^{n+1}) \min \{ 1, (\frac{w^n}{\sigma} )_{+}\}}{c} \\
&=c^{-1}\kappa^{-1}(u^{n+1})_++c^{-1}\kappa^{-1}(u^{n+1})_-\cdot\psi(w^n).
\end{align*}
Therefore, with $p^{n+1}\colonequals\kappa^{-1}(u^{n+1})$, boundary condition $\eqref{system_time_strong}_6$ reads
\begin{align*}
(k(s^{n+1})\nabla p^{n+1} + k(s^n) \cdot {\boldsymbol e})\cdot{\boldsymbol \nu}+c^{-1}p^{n+1}_++c^{-1}p^{n+1}_-\cdot\psi(w^n)&=\\[1mm]
(\nabla u^{n+1} + k(s^n) \cdot {\boldsymbol e})\cdot{\boldsymbol \nu}+c^{-1}\kappa^{-1}(u^{n+1})_++c^{-1}\kappa^{-1}(u^{n+1})_-\cdot\psi(w^n)
&=c^{-1}{{}} w^n
\end{align*}
on $\Sigma_\text{in}$ and is a nonlinear Robin condition both in $p^{n+1}$ and in $u^{n+1}$. The special structure of it when formulated in $u^{n+1}$ stems 
from the fact that the real functions
\begin{equation}
\label{superposition}
\kappa^*_\xi:u\mapsto c^{-1}\kappa^{-1}(u)_++c^{-1}\kappa^{-1}(u)_-\cdot\psi(w^n(\xi)),
\end{equation}
which, for $\xi\in\Sigma_\text{in}$ almost everywhere, induce a superposition operator on $\Sigma_\text{in}$, are increasing, cf.~\cite{Berninger09}. This observation is essential for our way to 
determine $u^{n+1}$ that we present below. In fact, for $w^n(\xi)\geq \sigma$, the function $c\cdot\kappa^*_\xi$ is nothing but the real function $\kappa^{-1}$ that induces
the inverse Kirchhoff transformation~\eqref{inverseKirchhoff}. Otherwise, the factor 
$\psi(w^n(\xi))=(\frac{w^n(\xi)}{\sigma})_{+}< 1$ just leads to a smaller weight
for negative values of $p$, see Fig.~\ref{fig:kappas}.
\begin{figure}
\begin{center}
\includegraphics[width=0.4\textwidth]{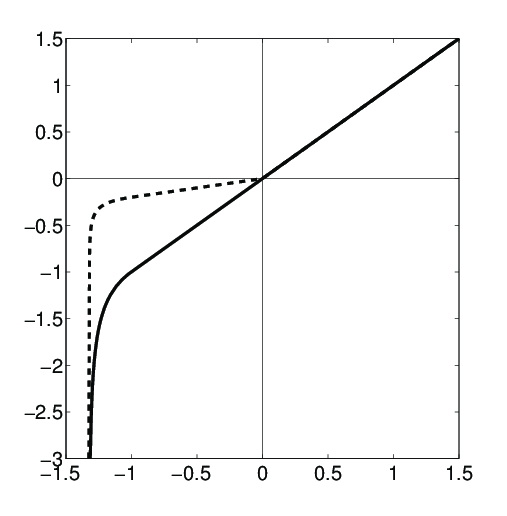}\hspace*{0.05\textwidth}
\includegraphics[width=0.4\textwidth]{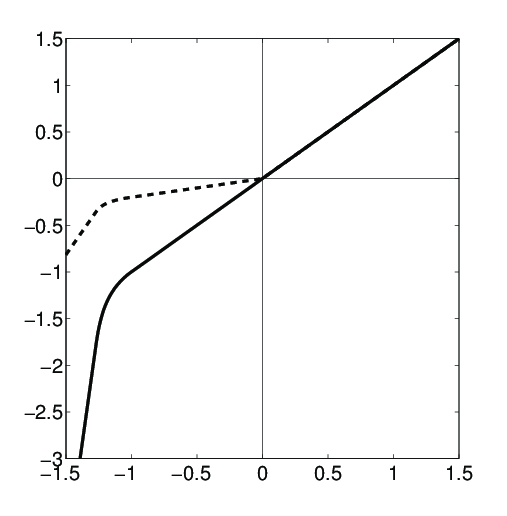}
\vspace*{-0.03\textwidth}
\end{center}
\caption{Left: degenerate $\kappa^*_\xi=\kappa^{-1}$ with $\Phi^{-1}(0)=-1.3245\cdot |p_b|>-\infty$ by Brooks--Corey, (bold line, data from Section~\ref{numerics}) and with weight $\psi(w^n(\xi))=0.2$ for $u_-$ (dotted line). Right: regularized $(\kappa^*_\xi)_\delta$ with $\delta^2=0.1$ and $\Phi^{-1}(0)=-\infty$, (bold line, construction by $k_\delta(s)\colonequals\max(k(s),\delta^2)$ as in \cite[Sec.\;1.4.3]{BerningerDiss}) and with weight $\psi(w^n(\xi))=0.2$ for $u_-$ (dotted line).}
\label{fig:kappas}
\end{figure}
Finally, note that system \eqref{system_time_strong} only contains an explicit coupling of ground and surface water 
in the sense that the solution $u^{n+1}$ in the porous medium depends implicitly on the known value $w^{n}$ from the 
previous time step while $w^{n+1}$ can be explicitly determined once $u^{n+1}$ is known.

\subsubsection{Weak formulation}


In contrast to the considerations in Section~
\ref{sec:existence}, our weak formulation of the spatial problem \eqref{system_time_strong} is based on 
the second line of~\eqref{richKirch}, concentrating on $u^{n+1}$ instead of $s^{n+1}$ as the unknown on $\Omega$.
Recall that by the Kirchhoff transformation~\eqref{Kirchhoff}, we have $u\geq \Phi(0)$. Depending on the kind 
of degeneracy of $k(p^{-1}_c(q))$ for $q\to -\infty$, we might have $\Phi(0)=-\infty$ or $-\infty<\Phi(0)<0$ as in the case of Brooks--Corey parameter functions (see \eqref{BrooksCoreysat} and \eqref{BrooksCoreykr} or \cite[Sec.\;1.3]{BerningerDiss} and Fig.~\ref{fig:kappas}). Anyway, for a weak formulation we should expect $u^{n+1}$ to be an element of the convex subset
\begin{equation*}
\mathcal{K}\colonequals\{v\in H^1(\Omega): v\geq \Phi(0),\enspace v_{|\Sigma_\text{out}}\leq 0\}
\end{equation*}
of the Sobolev space $H^1(\Omega)$. Together with the Signorini-type outflow condition~\eqref{outflow}, 
which is also encoded in $\mathcal{K}$, these constraints lead to the following weak interpretation of 
\eqref{system_time_strong}$_1$--\eqref{system_time_strong}$_6$ 
in terms of a variational inequality for $u^{n+1}\in\mathcal{K}$ (for details consult \cite[Sec.\;1.5.2/3,~3.4.5]{BerningerDiss}, where ${\mathbf e}=-e_z$)\,:
\begin{multline}\label{system_time_weak}
\int_\Omega (\Phi^{-1}(u^{n+1})-\tau f(\Phi^{-1}(u^{n+1})))\cdot (v-u^{n+1})
+ \tau\int_{\Sigma_\text{in}}\kappa^*_\xi(u^{n+1})\cdot (v-u^{n+1}) \\
+ \tau\int_\Omega\nabla u^{n+1}\cdot\nabla (v-u^{n+1})
- \int_\Omega \Phi^{-1}(u^{n})\cdot (v-u^{n+1}) \\[1mm]
+ \tau\int_\Omega k(\Phi^{-1}(u^n)){\boldsymbol e}\cdot \nabla(v-u^{n+1})
- \tau\int_{\Sigma_\text{in}}\frac{w^n}{c}\cdot (v-u^{n+1})\geq 0\qquad\forall v\in\mathcal{K}.
\end{multline}

\subsection{Convex theory}

In this section, we show that under reasonable assumptions, the variational inequality \eqref{system_time_weak}
is equivalent to a uniquely solvable convex minimization problem.

Let $\Psi_x$, $x\in\Omega$, and $\Psi_\xi$, $\xi\in\Sigma_\text{in}$, be primitives of $\Phi^{-1}-\tau f(x,t_{n+1}(\Phi^{-1}(\cdot)))$, $x\in\Omega$, and $\tau\kappa^*_\xi$, $\xi\in\Sigma_\text{in}$, respectively, 
on $[\Phi(0),\infty)\cap\R$. Since $\Phi^{-1}-\tau f\Phi^{-1}$ and $\kappa^*_\xi$ are increasing (cf.~Assumption~\ref{ass_data}),
$\Psi_x$ and $\Psi_\xi$ are convex so that the functional
\begin{multline}
\label{convex_functional}
F:v\mapsto\int_\Omega \Psi_x(v) + \int_{\Sigma_\text{in}}\Psi_\xi(v) + \frac{\tau}{2}\int_\Omega|\nabla v|^2 \\[1mm]
- \int_\Omega \Phi^{-1}(u^{n})\cdot v 
+ \tau\int_\Omega k(\Phi^{-1}(u^n)){\boldsymbol e}\cdot \nabla v
- \tau\int_{\Sigma_\text{in}}\frac{w^n}{c}\cdot v\qquad\forall v\in\mathcal{K}
\end{multline}
is strictly convex. Due to the assumptions on $k$, $p_c$ and $f$, the functions $\Psi_x$, $x\in\Omega$, are bounded and Lipschitz continuous.
Note, however, that we have $\kappa^*_\xi(u)\to -\infty$ for $u\searrow\Phi(0)$ in case of $w^n(\xi)>0$ so that as least for \mbox{$\Phi(0)>-\infty$,} 
the transformation induced by $\kappa^*_\xi$, $\xi\in\Sigma_\text{in}$, is ill-posed in general and the second integral in \eqref{system_time_weak} is not well-defined as an
$L^1$-integral, cf.\ left graphics in Fig.~\ref{fig:kappas}. However, if we regularize $k$ and $p_c$ according to Assumption~\ref{ass2}, the regularized functions $(\kappa_\xi^*)_\delta$, $\xi\in\Sigma_\text{in}$,
turn out to be Lipschitz continuous on $\R$
with Lipschitz constant $\max_u(\kappa_\xi^*)_\delta'(u)=\frac{1}{c\delta^2}\min\{1,(\frac{w^n(\xi)}{\sigma})_+\}$
and the corresponding integral in \eqref{system_time_weak} is well-defined, cf.\ right graphics in Fig.~\ref{fig:kappas}.
The following theorem states unique solvability of the variational inequality \eqref{system_time_weak} for regularized functions $(\kappa_\xi^*)_\delta$, $\xi\in\Sigma_\text{in}$, if their inverses have uniformly bounded Lipschitz constants for $\xi\in\Sigma_\text{in}$ or, equivalently, if the weighting factor $\psi(w^n(\xi))$ is uniformly bounded away from zero on a submanifold of $\Sigma_\text{in}$ with positive Hausdorff measure. One can even prove well-posedness of~\eqref{system_time_weak}, cf.~\cite[Prop.~2.4.11]{BerningerDiss}.

\begin{theorem}
\label{theorem_convex_mini}
Let $\Psi_x$, $x\in\Omega$, be induced by coefficient functions according to Assumptions~\ref{ass1} or~\ref{ass2}. Let $\Psi_\xi$, $\xi\in\Sigma_\text{in}$, be 
induced by regularized coefficient functions according to Assumptions~\ref{ass2} and, accordingly, let $\kappa^*_\xi$, $\xi\in\Sigma_\text{in}$, be replaced by 
$(\kappa^*_\xi)_\delta$ in \eqref{system_time_weak}. Furthermore, let $w^n\in L^2(\Sigma_\text{in})$. Then the variational inequality 
\eqref{system_time_weak} is equivalent to the convex minimization problem
\begin{equation}\label{convex_mini}
u^{n+1}\in\mathcal{K}:\quad
F(u^{n+1})\leq F(v)\qquad\forall v\in\mathcal{K}.
\end{equation}
If $w^n>0$ holds on a nonempty submanifold of $\Sigma_\text{in}$, then \eqref{convex_mini} is uniquely solvable.
\end{theorem}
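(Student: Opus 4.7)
The plan is to first establish the equivalence between the variational inequality \eqref{system_time_weak} and the convex minimization \eqref{convex_mini}, then separately verify strict convexity for uniqueness, and finally existence via the direct method. For the equivalence, I would observe that under the regularization both $\Psi_x$ and $\Psi_\xi$ are $C^1$: $\Psi_x$ because $\Phi^{-1}-\tau f(\Phi^{-1}(\cdot))$ is continuous, and $\Psi_\xi$ because $(\kappa_\xi^*)_\delta$ is Lipschitz with the explicit constant recalled just before the theorem statement. A straightforward computation identifies $\langle F'(u^{n+1}),v-u^{n+1}\rangle$ with the left-hand side of \eqref{system_time_weak}, so since $F$ is convex the variational inequality is exactly the first-order optimality condition for a minimizer of $F$ on the convex set $\mathcal{K}$, and convex minimization theory yields the equivalence.

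For uniqueness under the hypothesis $w^n>0$ on some submanifold $\Sigma'\subset\Sigma_\text{in}$ of positive Hausdorff measure, I would establish strict convexity of $F$ on $\mathcal{K}$. The Dirichlet contribution $\frac{\tau}{2}\|\nabla v\|_{L^2(\Omega)}^2$ is already strictly convex modulo additive constants. On $\Sigma'$ the weight $\psi(w^n(\xi))>0$, together with the strict monotonicity of $\kappa^{-1}_\delta$ on each of the two halflines $\{u\le 0\}$ and $\{u>0\}$, makes $(\kappa_\xi^*)_\delta$ strictly increasing and hence $\Psi_\xi$ strictly convex for $\xi\in\Sigma'$. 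Therefore two elements $v_1,v_2\in\mathcal{K}$ differing by a nonzero constant have distinct pointwise values at every $\xi\in\Sigma'$, and the strict convexity of $\Psi_\xi$ integrated over $\Sigma'$ gives strict convexity of $\int_{\Sigma_\text{in}}\Psi_\xi(\cdot)$ along additive constants. The two contributions combine to give strict convexity of $F$ on $\mathcal{K}$.

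Existence would follow by the direct method: $\mathcal{K}$ is a nonempty closed convex, hence weakly closed, subset of $H^1(\Omega)$, and $F$ is convex and continuous, thus weakly lower semi-continuous. The main obstacle is coercivity of $F$ on $\mathcal{K}$. All linear contributions and the Lipschitz term $\int_\Omega \Psi_x(v)$ can be absorbed via Young's inequality and the trace theorem into $\frac{\tau}{4}\|\nabla v\|_{L^2(\Omega)}^2 + C(1+\|v\|_{H^1(\Omega)})$. The crucial quadratic control must come again from the boundary integral: on $\Sigma'$, $(\kappa_\xi^*)_\delta$ has strictly positive asymptotic slopes both as $u\to+\infty$ (the uniform value $1/(c k_\delta(1))$) and as $u\to-\infty$ (a constant proportional to $\psi(w^n(\xi))/\delta^2>0$), so $\Psi_\xi(v)\geq c_0(\xi)v^2-C$ pointwise on $\Sigma'$ with $c_0>0$ almost everywhere. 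Combined with a weighted Poincar\'e--trace inequality of the form $\|v\|_{H^1(\Omega)}^2\leq C(\|\nabla v\|_{L^2(\Omega)}^2+\|v\|_{L^2(\Sigma')}^2)$---valid because $\Sigma'$ has positive Hausdorff measure---this suffices to dominate the negative lower-order terms and yields $F(v)\to+\infty$ as $\|v\|_{H^1(\Omega)}\to\infty$. The direct method then delivers a minimizer, which is unique by the previous paragraph.
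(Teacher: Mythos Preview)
Your proposal is correct and follows essentially the same route as the paper: equivalence via the first-order optimality condition for the convex functional~$F$, and existence/uniqueness via coercivity obtained from a quadratic lower bound $\Psi_\xi(v)\geq c_\delta v^2$ on the set where $w^n>0$ combined with the Poincar\'e--trace norm equivalence $\|v\|_{H^1(\Omega)}\sim\|\nabla v\|_{L^2(\Omega)}+\|v\|_{L^2(\Sigma')}$. The one imprecision is that your pointwise constant $c_0(\xi)$, being proportional to $\psi(w^n(\xi))$, is a~priori only positive almost everywhere rather than uniformly bounded below; the paper handles this by passing to a submanifold $\Sigma_w\subset\Sigma'$ on which $w^n\geq w_0>0$ holds uniformly (always possible once $w^n>0$ on a set of positive measure), which gives a uniform constant $c_\delta\leq\frac{\tau}{2c\,k_\delta(1)}\min\{1,w_0/\sigma\}$ and makes the coercivity argument go through cleanly.
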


\begin{proof}
Both the equivalence proof and the proof for the unique solvability is a combination of the convex theory on
the time-discretized Richards equation with Signorini-type boundary conditions presented in \cite[Sec.\;2.3]{BerningerDiss} and \cite{BerningerKornhuberSander11}
and with Robin conditions which can be found in \cite[Sec.\;3.4.1]{BerningerDiss} and \cite{BerningerSander10}. Here, Lipschitz continuity of
the regularized $(\kappa^*_\xi)_\delta$ is needed.
The additional dependency of $\Psi_x$ on $x\in\Omega$ and $\Psi_\xi$ on $\xi\in\Sigma_\text{in}$ can be easily included in the theory, taking into account $f(\cdot,t_{n+1},s^{n+1})\in L^\infty(\Omega)$ and $\psi(w^n)\in L^2(\Sigma_\text{in})$ which, in particular, leads to well-definedness of all considered integrals.

The only issue we need to consider here is the lack of a Dirichlet boundary with positive Hausdorff measure
which guarantees coercivity of the 
functional~$F$. In our case, the latter is provided by the nonlinear Robin boundary condition with the help of Poincar\'e's 
inequality \cite[p.\;127]{DautrayLions2} in $H^1(\Omega)$ which gives
\begin{equation}
\label{Poincare}
\biggl\|v-\frac{1}{|\Omega|}\int_\Omega v\biggr\|_{L^2(\Omega)}\leq c_\Omega \|\nabla v\|_{L^2(\Omega)}\quad\forall v\in H^1(\Omega)
\end{equation}
for some constant $c_\Omega>0$ depending on $\Omega$. In detail, for the
nonempty submanifold $\Sigma_{w}$ of $\Sigma_\text{in}$ 
where $w^n>0$ holds, i.e., $w^n\geq w_0$ for a $w_0>0$, we prove that
\begin{equation}
\label{equivnorm}
\|v\|_{\Sigma_{w}}\colonequals\|\nabla v\|_{L^2(\Omega)}+\|v\|_{L^2(\Sigma_{w})}\quad\forall v\in H^1(\Omega)
\end{equation}
defines a norm on $H^1(\Omega)$ which is equivalent to $\|\,{\cdot}\,\|_{H^1(\Omega)}$. To see this, note that by the 
trace theorem \cite[p.\;1.61]{BrezziGilardi87}, there is a $c_{\Sigma_{w}}>0$ such that the estimate
\begin{equation*}
\|v\|_{\Sigma_{w}}\leq c_{\Sigma_{w}}\|v\|_{H^1(\Omega)}\quad\forall v\in H^1(\Omega)
\end{equation*}
holds for the seminorm $\|\,{\cdot}\,\|_{\Sigma_{w}}$. Moreover, if $\|v\|_{\Sigma_{w}}=0$, then we have 
\mbox{$v\equiv c_w\in \R$} on $\Omega$ by \eqref{Poincare} and, furthermore, $v\equiv c_w$ on $\Sigma_{w}$ by 
the trace theorem, i.e., $c_w=0$. Therefore, $\|\,{\cdot}\,\|_{\Sigma_{w}}$ is a norm on $H^1(\Omega)$ 
equivalent to $\|\,{\cdot}\,\|_{H^1(\Omega)}$, cf.~\cite[p.\;153]{Werner05}.

W.l.o.g., we assume $\Psi_\xi(0)=0$ for all $\xi\in\Sigma_\text{in}$ and finish the proof by showing that there is a $c_\delta>0$ such that
\begin{equation*}
c_\delta\|v\|_{L^2(\Sigma_{w})}^2\leq\int_{\Sigma_\text{in}}\Psi_\xi(v)\quad\forall v\in H^1(\Omega).
\end{equation*}
We prove that $c_\delta v^2\leq \Psi_\xi(v)$ for all $\xi\in\Sigma_{w}$. For all $\xi\in\Sigma_\text{in}$, $\Psi_\xi(0)$ is the minimum of 
$\Psi_\xi$ since $\Psi_\xi'(0)=\tau(\kappa^*_\xi)_\delta(0)=0$ by \eqref{inverseKirchhoff} and \eqref{superposition}. Therefore, it remains to show that 
$2c_\delta v\leq \tau(\kappa^*_\xi)_\delta(v)$ for $v\in\R$ and all $\xi\in\Sigma_{w}$ or, equivalently, that $\tau(\kappa^*_\xi)_\delta'\geq 2c_\delta$ for $\xi\in\Sigma_{w}$. 
By \eqref{inverseKirchhoff} and \eqref{superposition} we can choose 
$c_\delta\leq \frac{\tau}{2c\, k_\delta(1)}\min\{1,\frac{w_0}{\sigma}\}$.
\end{proof}

\subsection{Finite element discretization and monotone multigrid} 

In the following, we shortly indicate our space discretization and numerical solution technique for~\eqref{system_time_weak}. 
For more details we refer the reader to \cite[Sec.\;2.5,~3.4.5]{BerningerDiss}. We discretize \eqref{system_time_weak} 
or \eqref{convex_functional}, respectively, by piecewise linear finite elements. 
For simplicity, we consider the two-dimensional case of a polygonal domain $\Omega\subset\R^2$. 
Let $\mathcal{T}_j$, $j\in\N_0$, be a conforming triangulation of $\Omega$. 
The set of all vertices of the triangles in $\mathcal{T}_j$ is denoted by $\mathcal{N}_j$, 
the set of those vertices lying on $\overline\Sigma_\text{in}$ and $\overline\Sigma_\text{out}$ shall be called $\mathcal{N}_{j}^\text{{in}}$ and 
$\mathcal{N}_{j}^\text{{out}}$, respectively. The triangulation shall resolve the parts of the boundary corresponding 
to different boundary conditions.

Let $\mathcal{S}_j\subset H^1(\Omega)$ be the finite element space of all continuous functions in $H^1(\Omega)$ 
which are affine on each triangle $t\in\mathcal{T}_j$.
To discretize the surface water level $w$ we consider the trace grid $\mathcal{T}_j^\text{{in}}$ of $\mathcal{T}_j$
on $\overline\Sigma_\text{in}$, and its dual grid~$\mathcal{T}_j^{\text{{in}},*}$.  On $\mathcal{T}_j^{\text{{in}},*}$
we define the finite element space $\mathcal{S}_j^\text{{in}}$ as the space of all functions that are constant
on each element.  This is a suitable space for $w$, as the evolution equation~\eqref{ode1}
does not involve space derivatives.  Note that each element of $\mathcal{T}_j^{\text{{in}},*}$ corresponds
uniquely to a vertex $q$ of $\mathcal{N}_j^{\text{{in}}}$ which we call element center.   We will therefore sometimes use vertices of
$\mathcal{N}_j^{\text{{in}}}$ to label elements in $\mathcal{T}_j^{\text{{in}},*}$.
The nodal basis function corresponding to a node $q\in\mathcal{N}_{j}$ 
shall be called $\lambda_q^{(j)}$, and the (piecewise constant) basis function corresponding
to an element $q$ of $\mathcal{T}_j^{\text{{in}},*}$ will be called~$\lambda_q^{\text{{in}},(j)}$.

The finite dimensional analogue of $\mathcal{K}$ is the convex set
\begin{equation*}
\mathcal{K}_j\colonequals
\big\{v\in \mathcal{S}_j:v(q)\geq \Phi(0)\enspace\forall q\in\mathcal{N}_j\,\wedge\, v(q)\leq 0\enspace\forall q\in\mathcal{N}_j^{\text{out}}\big\}.
\end{equation*}
We define the $\mathcal{S}_j$-interpolation $I_{\mathcal{S}_j}:C(\overline\Omega)\to\mathcal{S}_j$ 
by evaluation at the vertices,
and the $\mathcal{S}_j^{\text{{in}}}$-interpolation $I_{\mathcal{S}_j^{\text{{in}}}}:C(\overline\Sigma_\text{in})\to\mathcal{S}_j^{\text{{in}}}$
by evaluation at the element centers of~$\mathcal{T}_j^{\text{{in}},*}$.  This allows to
define the finite element discretization of $F$ in~\eqref{convex_functional} by
\begin{multline}
\label{Fj}
F_j:v\mapsto\int_\Omega I_{\mathcal{S}_j}\Psi_x(v) + \int_{\Sigma_\text{in}}I_{\mathcal{S}_j^{\text{{in}}}}\Psi_\xi(v) + \frac{\tau}{2}\int_\Omega|\nabla v|^2 \\[1mm]
- \int_\Omega \Phi^{-1}(u^{n})\cdot v 
+ \tau\int_\Omega k(\Phi^{-1}(u^n)){\boldsymbol e}\cdot \nabla v 
- \tau\int_{\Sigma_\text{in}}\frac{w^n_j}{c}
\cdot v\qquad\forall v\in\mathcal{K}_j,
\end{multline}
where we assume that a discrete surface water level $w_j^n\in\mathcal{S}_j^{\text{{in}}}$ is available and also used in
the definition of $(\kappa^*_\xi)_\delta$ and, equivalently, of $\Psi_\xi$. The discrete finite element solution $u^n_j\in\mathcal{S}_j$ is assumed to be known, too.
Then, with the weights 
$$
h_q\colonequals\int_\Omega\lambda_q^{(j)}\quad\mbox{and}\quad h_q^{\text{{in}}}\colonequals\int_{\Sigma_\text{in}}\lambda_q^{\text{{in}},(j)},
$$
we can note the following.
\begin{theorem}
\label{discequiunique}
Assume that the conditions of Theorem \ref{theorem_convex_mini} are satisfied. Then the discrete variational inequality
for $u_j^{n+1}\in\mathcal{K}_j:$
\begin{multline}
\label{Varia3discr}
\sum_{q\in\mathcal{N}_j}\Big[\Phi^{-1}(u_j^{n+1}(q))-\tau f\big(\Phi^{-1}(u_j^{n+1}(q))\big)\Big]\,\big(v(q)-u_j^{n+1}(q)\big)\,h_q\\[1mm] 
+ \tau\sum_{q\in\mathcal{N}_{j}^{\text{{in}}}}(\kappa^*_\xi)_\delta(u_j^{n+1}(q))\,(v(q)-u_j^{n+1}(q))\,h_{q}^{\text{{in}}} \\[1mm]
+ \tau\int_\Omega\nabla u_j^{n+1}\cdot\nabla (v-u_j^{n+1})
- \int_\Omega \Phi^{-1}(u_j^{n})\cdot (v-u_j^{n+1}) \\[1mm]
+ \tau\int_\Omega k(\Phi^{-1}(u_j^n)){\boldsymbol e}\cdot \nabla(v-u_j^{n+1}) 
- \tau\int_{\Sigma_\text{in}}\frac{w^n_j}{c}\cdot (v-u_j^{n+1})
\,\geq\, 0\qquad\forall v\in\mathcal{K}_j
\end{multline}
is equivalent to the discrete convex minimization problem
\begin{equation}
\label{F_ju}
u_j^{n+1}\in\mathcal{K}_j:\quad
F_j(u_j^{n+1})\leq F_j(v)\qquad\forall v\in\mathcal{K}_j.
\end{equation}
If $w^n(q)>0$ holds for a $q\in\mathcal{N}^{\text{{in}}}_j$, then \eqref{F_ju} is uniquely solvable.
\end{theorem}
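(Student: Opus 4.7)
The plan is to transfer the continuous argument of Theorem~\ref{theorem_convex_mini} to the finite-dimensional setting on $\mathcal{S}_j$, exploiting that the lumped-mass interpolation of the nonlinear terms in $F_j$ preserves convexity while converting integrals into positively weighted sums over nodes. First, I note that $F_j$ is strictly convex: the Dirichlet contribution $\frac{\tau}{2}\int_\Omega |\nabla v|^2$ is strictly convex in the gradient component, and the interpolated potentials satisfy
\begin{equation*}
\int_\Omega I_{\mathcal{S}_j}\Psi_x(v) = \sum_{q\in\mathcal{N}_j}\Psi_x(v(q))\, h_q, \qquad \int_{\Sigma_\text{in}}I_{\mathcal{S}_j^{\text{in}}}\Psi_\xi(v) = \sum_{q\in\mathcal{N}_j^{\text{in}}}\Psi_\xi(v(q))\, h_q^{\text{in}},
\end{equation*}
with weights $h_q, h_q^{\text{in}}>0$, so they are positive combinations of the convex functions $\Psi_x$, $\Psi_\xi$ and hence convex in the nodal coefficient vector.

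For the equivalence of \eqref{Varia3discr} and \eqref{F_ju}, I would compute the directional derivative of $F_j$ at $u_j^{n+1}\in\mathcal{K}_j$ along the admissible direction $v-u_j^{n+1}$ with $v\in\mathcal{K}_j$. Using $\Psi_x' = \Phi^{-1}-\tau f(\Phi^{-1}(\cdot))$ and $\Psi_\xi' = \tau(\kappa_\xi^*)_\delta$, the variation of the two interpolated terms produces exactly the first two sums in \eqref{Varia3discr}, while the linear and quadratic remaining terms of $F_j$ yield the integral contributions. The resulting first-order optimality condition $F_j'(u_j^{n+1})(v-u_j^{n+1})\geq 0$ on $\mathcal{K}_j$ therefore coincides with \eqref{Varia3discr}, and convexity of $F_j$ upgrades this condition to a characterization of global minimizers on the convex set $\mathcal{K}_j$, establishing the equivalence.

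Uniqueness of a minimizer for \eqref{F_ju} is immediate from strict convexity. For existence, I plan to adapt the Poincar\'e argument from Theorem~\ref{theorem_convex_mini} to the discrete setting. If $q_0\in\mathcal{N}_j^{\text{in}}$ is a node with $w^n(q_0)>0$, then on the finite-dimensional space $\mathcal{S}_j$ the map $v\mapsto\bigl(\|\nabla v\|_{L^2(\Omega)}^2 + |v(q_0)|^2\bigr)^{1/2}$ is a norm (vanishing gradient forces $v$ to be constant, and the pointwise term pins that constant to zero), hence equivalent to $\|\cdot\|_{H^1(\Omega)}$ on $\mathcal{S}_j$ by finite-dimensionality. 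Since the regularized function $(\kappa_{q_0}^*)_\delta$ satisfies $(\kappa_{q_0}^*)_\delta(0)=0$ together with the uniform lower bound $(\kappa_{q_0}^*)_\delta'\geq \frac{1}{c\, k_\delta(1)}\min\{1,w^n(q_0)/\sigma\}>0$ derived in the proof of Theorem~\ref{theorem_convex_mini}, normalizing $\Psi_{q_0}(0)=0$ gives a global quadratic lower bound $\Psi_{q_0}(v(q_0))\geq c_\delta\, v(q_0)^2$ for some $c_\delta>0$. Combining this pointwise estimate (weighted by $h_{q_0}^{\text{in}}>0$) with the Dirichlet energy $\frac{\tau}{2}\|\nabla v\|_{L^2}^2$ yields coercivity of $F_j$ in the equivalent norm, and existence of a minimizer on the closed convex set $\mathcal{K}_j\subset\mathcal{S}_j$ follows by the direct method.

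The main obstacle I anticipate is this last coercivity step: the continuous proof of Theorem~\ref{theorem_convex_mini} exploits that $w^n$ is positive on a submanifold of positive Hausdorff measure, whereas here the hypothesis only provides a single interior node. The point that makes the discrete argument go through is that the lumped quadrature assigns a strictly positive weight $h_{q_0}^{\text{in}}$ to that single node, so the uniform coercivity of $(\kappa_{q_0}^*)_\delta$ afforded by the regularization acts as a discrete Robin penalty strong enough to control the constant modes, playing the role of the boundary contribution from a positive-measure submanifold in the continuous proof.
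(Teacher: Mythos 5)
Your proposal is correct and follows the approach the paper intends: the paper's proof is essentially the remark that the argument is analogous to that of Theorem~\ref{theorem_convex_mini}, with references to prior work, and your filled-in details match that blueprint. In particular, your handling of the coercivity step---replacing the positive-measure submanifold $\Sigma_w$ of the continuous proof by a single node $q_0$ with weight $h^{\text{in}}_{q_0}>0$ and invoking finite-dimensionality of $\mathcal{S}_j$ to obtain the norm equivalence that pins down constants---is exactly the discrete-specific adaptation the paper's ``analogous'' silently requires.
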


The proof of this theorem for $\mathcal{K}_j\subset\mathcal{S}_j$ is analogous to the one of its continuous counterpart,
Theorem~\ref{theorem_convex_mini}, for $\mathcal{K}\subset H^1(\Omega)$. For this 
we refer again to \cite[Sec.\;2.5]{BerningerDiss} and~\cite{BerningerSander10}. We only point out that the second integral in the definition~\eqref{Fj} of $F_j$ is the same as if $\mathcal{S}_j^{\text{{in}}}$ is replaced by the space of piecewise linear finite elements on~$\mathcal{T}_j^{\text{{in}}}$. As for~\eqref{system_time_weak}, well-posedness can also be established for~\eqref{Varia3discr}.
Under reasonable assumptions, one also obtains a convergence result for the discretization.

\begin{theorem}
Let the conditions of Theorem~\ref{theorem_convex_mini} be satisfied and $\mathcal{T}_j$, $j\geq 0$, be shape regular and
$h_j = \max_{t\in\mathcal{T}_j} \diam t \to 0$ for $j\to\infty$. Let $w_j^n\in L^2(\Sigma_\text{in})$, $j\geq 0$, be given with 
$w_j^n\to w^n$ in $L^2(\Sigma_\text{in})$ for $j\to\infty$.
Furthermore, we assume 
$C^\infty(\overline\Omega)\cap\mathcal{K}$ to be dense in $\mathcal{K}$.
We define $s^{n+1}=\Phi^{-1}(u^{n+1})$ and, with $\kappa_\delta$ obtained by regularization of $\kappa$ 
according Assumption~\ref{ass2}, we define 
$p^{n+1}=\kappa_\delta^{-1}(u^{n+1})$ (analogously, $s_j^{n+1}$ and $p^{n+1}_j$ are defined).
Then, for $j\to\infty$, we have the convergence
\begin{equation}
\label{ujconvergence}
u_j^{n+1}\to u^{n+1}\quad\mbox{in}\enspace H^1(\Omega),
\end{equation}
\begin{equation}
\label{pjconvergence}
p_j^{n+1}\to p^{n+1}\quad\mbox{in}\enspace H^1(\Omega)\quad\mbox{and}\quad I_{\mathcal{S}_j}p_j^{n+1}\to p^{n+1}\quad\text{in}\enspace L^2(\Omega).
\end{equation}
In addition, we have
\begin{equation}
\label{sjconvergence}
s_j^{n+1}\to s^{n+1}\quad\mbox{in}\enspace H^1(\Omega)\quad\mbox{and}\quad I_{\mathcal{S}_j}s_j^{n+1}\to s^{n+1}\quad\text{in}\enspace L^2(\Omega)
\end{equation}
if $\Phi^{-1}:(\Phi(0),\infty)\to\R$ is Lipschitz continuous for the first and H\"older continuous for the second statement, respectively.
\end{theorem}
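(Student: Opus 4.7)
The plan is to mimic the standard convergence proof for finite element discretizations of convex minimization problems over convex sets, adapted to our nonlinear Robin setting, and then transfer this to the physical variables $p$ and $s$ via the (regularized) inverse Kirchhoff transformation.

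First I would derive uniform a priori bounds on $u_j^{n+1}$. Testing the discrete variational inequality \eqref{Varia3discr} with, say, a fixed admissible function $v_0\in \mathcal{K}_j$ (e.g.\ the $\mathcal{S}_j$-interpolation of a smooth function in $\mathcal{K}$) and exploiting convexity of $F_j$, combined with the coercivity argument from the proof of Theorem~\ref{theorem_convex_mini} (the equivalent norm $\|\cdot\|_{\Sigma_w}$ on $H^1(\Omega)$, together with $c_\delta\|v\|_{L^2(\Sigma_w)}^2\leq \int_{\Sigma_\text{in}} I_{\mathcal{S}_j^\text{in}}\Psi_\xi(v)$ for $j$ large enough so that $w_j^n\geq w_0/2$ on the relevant trace submanifold in $L^2$-sense), yields $\|u_j^{n+1}\|_{H^1(\Omega)}\leq C$ uniformly in $j$. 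Here the assumption $w_j^n\to w^n$ in $L^2(\Sigma_\text{in})$ is used to keep the coercivity constant bounded away from zero. By Rellich--Kondrachov, a subsequence satisfies $u_j^{n+1}\rightharpoonup \tilde u$ weakly in $H^1(\Omega)$ and strongly in $L^2(\Omega)$ and in $L^2(\partial\Omega)$.

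Next I would pass to the limit in \eqref{Varia3discr} to show that $\tilde u\in\mathcal{K}$ solves the continuous variational inequality \eqref{system_time_weak}. For arbitrary $v\in C^\infty(\overline\Omega)\cap \mathcal{K}$, the interpolants $I_{\mathcal{S}_j}v$ lie in $\mathcal{K}_j$ and converge to $v$ in $H^1(\Omega)$. Standard interpolation estimates imply $\int_\Omega I_{\mathcal{S}_j}\Psi_x(w_j)\to\int_\Omega \Psi_x(\tilde u)$ whenever $w_j\to \tilde u$ in $L^2(\Omega)$ (using the Lipschitz continuity of $\Psi_x$ inherited from that of $\Phi^{-1}-\tau f\Phi^{-1}$ in the regularized setting), and analogously on $\Sigma_\text{in}$ using Lipschitz continuity of $(\kappa^*_\xi)_\delta$ and the $L^2$ convergence of traces. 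The linear terms converge by weak convergence of $\nabla u_j^{n+1}$ and strong convergence of the data $u^n$ and $w_j^n$. Taking $v=I_{\mathcal{S}_j}\varphi$ in \eqref{Varia3discr} and passing to $\liminf$ (using weak lower semicontinuity of $v\mapsto\int_\Omega |\nabla v|^2$ for the quadratic part hidden in the inequality), then taking the density of $C^\infty(\overline\Omega)\cap\mathcal{K}$ in $\mathcal{K}$, gives \eqref{system_time_weak} for $\tilde u$. By the uniqueness statement in Theorem~\ref{theorem_convex_mini}, $\tilde u=u^{n+1}$ and the whole sequence converges, not just a subsequence.

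The main obstacle is upgrading the weak $H^1$-convergence \eqref{ujconvergence} to strong convergence, which is the standard difficulty in finite element convergence for variational inequalities. The idea is to test \eqref{Varia3discr} with $v=I_{\mathcal{S}_j}\varphi$ for $\varphi\in C^\infty(\overline\Omega)\cap\mathcal{K}$ approaching $u^{n+1}$ in $H^1(\Omega)$ to obtain
\begin{equation*}
\tau\int_\Omega |\nabla u_j^{n+1}|^2\leq \tau\int_\Omega \nabla u_j^{n+1}\cdot\nabla I_{\mathcal{S}_j}\varphi + (\text{lower-order terms in } u_j^{n+1}\text{ and data}),
\end{equation*}
where the lower-order terms pass to the limit by the nonlinear convergence arguments above. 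This yields $\limsup_j \|\nabla u_j^{n+1}\|_{L^2(\Omega)}\leq \|\nabla u^{n+1}\|_{L^2(\Omega)}$, which combined with weak convergence gives strong convergence in $H^1(\Omega)$. The tricky point is controlling the nonlinear energy contribution $\int_\Omega I_{\mathcal{S}_j}\Psi_x(u_j^{n+1})$; here one uses the Lipschitz bounds on $\Psi_x$ together with the $L^2(\Omega)$ convergence already established, plus a consistency estimate $|\int_\Omega (I_{\mathcal{S}_j}\Psi_x(u_j^{n+1})-\Psi_x(u_j^{n+1}))|\to 0$ which follows from standard finite element quadrature/interpolation error estimates.

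Finally, \eqref{pjconvergence} and \eqref{sjconvergence} follow by composing with the (Lipschitz, respectively H\"older) mappings $\kappa_\delta^{-1}$ and $\Phi^{-1}$ and invoking known facts about composition of Sobolev functions with Lipschitz maps: Lipschitz continuity of $\kappa_\delta^{-1}$ (guaranteed by the regularization from Assumption~\ref{ass2}) transfers $H^1$-convergence of $u_j^{n+1}$ to $H^1$-convergence of $p_j^{n+1}=\kappa_\delta^{-1}(u_j^{n+1})$, and similarly for $s_j^{n+1}=\Phi^{-1}(u_j^{n+1})$ under the Lipschitz assumption. The statements about the nodal interpolants $I_{\mathcal{S}_j}p_j^{n+1}\to p^{n+1}$ and $I_{\mathcal{S}_j}s_j^{n+1}\to s^{n+1}$ in $L^2(\Omega)$ are then a consequence of the triangle inequality combined with the standard interpolation estimate $\|I_{\mathcal{S}_j}v-v\|_{L^2(\Omega)}\leq C h_j \|v\|_{H^1(\Omega)}$ applied to $v=p_j^{n+1}$ or $v=s_j^{n+1}$, whose $H^1$-norms remain bounded thanks to the strong $H^1$-convergence just established.
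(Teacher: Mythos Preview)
Your outline is essentially the approach the paper has in mind: it does not spell out a proof but refers to \cite[Thm.~2.5.9]{BerningerDiss} for \eqref{ujconvergence} (noting only that the equivalent norm $\|\cdot\|_{\Sigma_w}$ from \eqref{equivnorm} replaces the $H^1$-seminorm used there) and to \cite{BerningerKornhuberSander11} for \eqref{pjconvergence} and \eqref{sjconvergence}. Your coercivity/weak-limit/uniqueness/strong-$H^1$ upgrade scheme is exactly that standard argument, and your use of the Robin term for coercivity matches the paper's remark.

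There is, however, a genuine gap in your last paragraph. For the second part of \eqref{sjconvergence}, namely $I_{\mathcal{S}_j}s_j^{n+1}\to s^{n+1}$ in $L^2(\Omega)$, the theorem only assumes that $\Phi^{-1}$ is H\"older continuous. Under this hypothesis you cannot invoke the interpolation estimate $\|I_{\mathcal{S}_j}v-v\|_{L^2(\Omega)}\leq Ch_j\|v\|_{H^1(\Omega)}$ with $v=s_j^{n+1}$, because $s_j^{n+1}=\Phi^{-1}(u_j^{n+1})$ need not lie in $H^1(\Omega)$ with a uniform bound (composition with a merely H\"older map does not preserve $H^1$). Your argument as written silently uses the Lipschitz hypothesis for both statements in \eqref{sjconvergence}, which is precisely what the theorem distinguishes.

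The fix is to work elementwise, exploiting that $u_j^{n+1}$ is affine on each $t\in\mathcal{T}_j$. If $\Phi^{-1}$ is $\alpha$-H\"older with constant $L$, then on each element $t$ one has $|s_j^{n+1}(x)-s_j^{n+1}(y)|\leq L|u_j^{n+1}(x)-u_j^{n+1}(y)|^\alpha\leq L(h_j\|\nabla u_j^{n+1}\|_{L^\infty(t)})^\alpha$, and since $I_{\mathcal{S}_j}s_j^{n+1}$ agrees with $s_j^{n+1}$ at the vertices, $\|I_{\mathcal{S}_j}s_j^{n+1}-s_j^{n+1}\|_{L^\infty(t)}$ is controlled by the same quantity. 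An inverse inequality turns $\|\nabla u_j^{n+1}\|_{L^\infty(t)}$ into $h_j^{-d/2}\|\nabla u_j^{n+1}\|_{L^2(t)}$, and summing over elements with H\"older's inequality yields an $L^2$ bound of order $h_j^{\alpha(1-d/2)+d/2}\|\nabla u_j^{n+1}\|_{L^2(\Omega)}^\alpha$ (or a variant thereof), which tends to zero. Combined with $s_j^{n+1}\to s^{n+1}$ in $L^2(\Omega)$, which does follow from H\"older continuity and $u_j^{n+1}\to u^{n+1}$ in $L^2(\Omega)$, this closes the argument. This elementwise route is what \cite{BerningerKornhuberSander11} uses.
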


The proof of \eqref{ujconvergence} is an adaptation of the one given in \cite[Thm.\;2.5.9]{BerningerDiss}, where we need to use the norm
defined in \eqref{equivnorm} instead of the $H^1$-seminorm as an equivalent norm in the solution space, which is $H^1(\Omega)$ here.
For the proofs of \eqref{pjconvergence} and \eqref{sjconvergence} see \cite{BerningerKornhuberSander11}.

In terms of finite elements, a straightforward discretization of \eqref{system_time_strong}$_7$ reads
$$
w^{n+1}_j = \tau\left(r^{n+1}_j - 
I_{\mathcal{S}^{\text{{in}}}_j}\,g\left(u^{n+1}_{j|\mathcal{S}^{\text{{in}}}_j}, w^{n}_j\right)\right)+ w^n_j 
\quad\mbox{in}\enspace \mathcal{S}^{\text{{in}}}_j.
$$
Here, we can assume that $u_j^{n+1}$ has been obtained by \eqref{Varia3discr} and that the iterates 
$r^{n+1}_j,\, w^n_j\in\mathcal{S}^{\text{{in}}}_j$ are given.

Quadrature of the second integral in \eqref{Varia3discr} is given by 
$\mathcal{S}_j$-interpolation of $\Phi^{-1}(u_j^{n})$.
In addition, with regard to the numerical treatment of the discretized problem \eqref{Varia3discr}, it is necessary
to use an upwind technique for the gravitational (convective) term $k(\Phi^{-1}(u^n)){\boldsymbol e}$ in the third integral. 
In the finite element context, this can be achieved by adding an artificial viscosity term
to the discretized convection, cf.~\cite[Sec.\;4.2]{BerningerDiss} and~\cite{BerningerKornhuberSander11}.

The discrete convex obstacle problem arising from \eqref{Varia3discr} is defined on a set~$\mathcal{K}_j$
which is a product of intervals. To such problems,
monotone multigrid methods can be applied \cite{Kornhuber02}. In short, these methods first 
minimize $F_j$ by successive one-dimensional
minimization along the $\lambda_q^{(j)}$-directions for $q\in\mathcal{N}_j$. After application of this nonlinear 
Gauss--Seidel fine grid smoother, a suitable coarse grid correction given by constraint 
Newton linearization in the smooth regimes of the nonlinear system given by \eqref{Varia3discr} accelerates
convergence. Linear multigrid convergence rates are obtained asymptotically. For details concerning the application of
this method to a Signorini-type and Robin problem like \eqref{Varia3discr} we refer to 
\cite[Sec.\;2.7,~3.4.5]{BerningerDiss}.

In the next section, we will apply monotone multigrid to an example with Brooks--Corey parameter functions
which do neither satisfy the conditions for the existence theorem~\ref{existence} nor for the theorems given in this section which require
Kirchhoff transformations given by regularized parameter functions. However, even though the equivalence result
of Theorem~\ref{theorem_convex_mini} does not hold if $(\kappa^*_\xi)_\delta$ is replaced by $\kappa^*_\xi$ 
arising from the Brooks--Corey parametrization (since \eqref{system_time_weak} is not well-defined), it turns out that \eqref{convex_mini} is still a well-posed convex minimization problem in this case. The same is true for the discretized problem (since \mbox{$\Phi(0)<u_j^{n+1}$,} one can even prove Theorem~\ref{discequiunique} here) so that \eqref{F_ju} is well-posed and can be solved by monotone multigrid methods~\cite[Sec.\;3.4.5]{BerningerDiss}.

\section{Numerical example}
\label{numerics}

We close the article with a numerical example involving the coupling
of surface water to ground water, and a Signorini-type outflow condition.
We conclude that our model and solution algorithm is well-suited for
practical applications.

\begin{figure}
\begin{center}
 \includegraphics[width=0.9\textwidth]{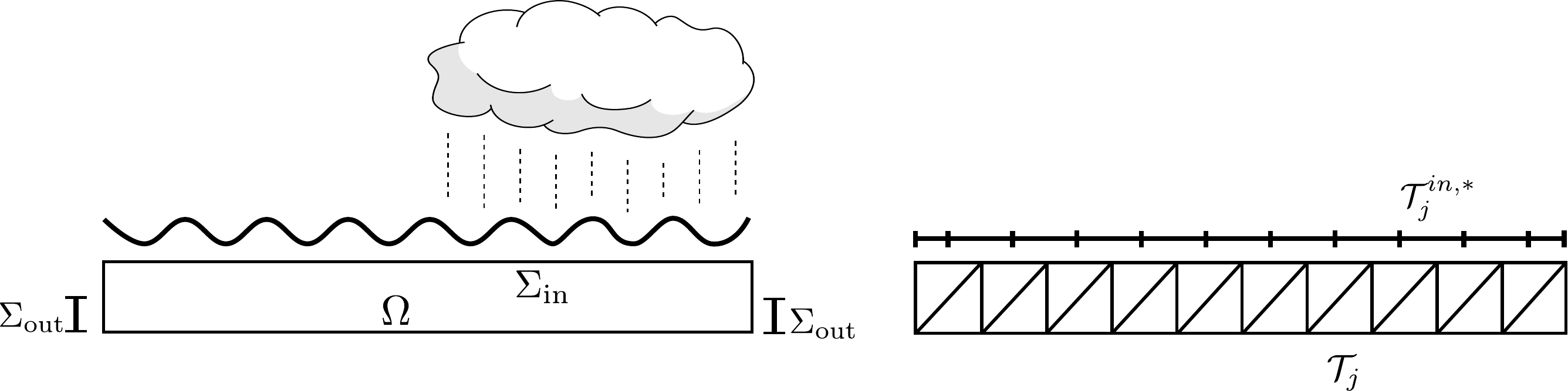}
\end{center}
\caption{Left: Setting of the numerical example.  Right: Coarse grids.  The actual grids
were created from these grids by four steps of uniform refinement.}
\label{fig:numerical_example_schematic}
\end{figure}

The problem setting is depicted schematically in the left part of Fig.~\ref{fig:numerical_example_schematic}.
As the ground water domain, we use a $10\,\text{m} \times 1\,\text{m}$ rectangle whose lower left corner shall
be situated in the origin of~$\R^2$.
Zero-flow boundary conditions are prescribed on the left,
right, and bottom sides, with the exception of two 0.5\,m stretches $\Sigma_\text{out}$ on the
lower left and right, where we set the Signorini outflow conditions~\eqref{outflow}.
The entire upper side $\Sigma_\text{in}$ couples with the surface water model
through the flux conditions~\eqref{ode1} and \eqref{q_inflow}.
The resistance $c$ is set to $10^5\,\text{s}$ (cf.~\cite{WieseNuetzmann09}),
and for the threshold parameter $\sigma$ we choose 0.02\,m.
We use sand as the porous medium, with soil parameters $n=0.437$ (porosity),
$s_m=0.0458$ (residual saturation), $s_M=1$ (maximal saturation), 
$p_b=-712.2$\,Pa (bubbling pressure), 
$\lambda=0.694$ (pore size distribution factor), and $K=6.66\cdot 10^{-9}\,\text{m}^{2}$ (absolute permeability), taken from \cite[Tables~5.3.2 and~5.5.5]{RawlsBrakensiek93}. We set $\mu = 1.002\cdot 10^{-3}\;\text{Pa\,s}$ for the dynamic viscosity of water. The parameters $p_b$ and $\lambda$ characterize the parameter functions $p\mapsto s=p_c^{-1}(p)$ and $s\mapsto k(s)$, according to Brooks--Corey~\cite{BrooksCorey64,vanGenuchten80} and Burdine~\cite{Burdine53} of the form
\begin{equation}
\label{BrooksCoreysat}
s=p_c^{-1}(p)=
\begin{cases}
s_m+(s_M-s_m)\left(\frac{p}{p_b}\right)^{-{{\lambda}}} & \text{ for $p\leq p_b$},\\
s_M & \text{ for $p\geq p_b$},
\end{cases}
\end{equation}
and
\begin{equation}
\label{BrooksCoreykr}
k(s)=K\left(\frac{s-s_m}{s_M-s_m}\right)^{3+\frac{2}{{{\lambda}}}}, \quad s\in [s_m,s_M].
\end{equation}

The domain is discretized with a uniform triangle grid on $161 \times 17$ vertices,
giving a mesh size of $h=\sqrt{2}/16$\,m.
We construct this grid by four uniform refinement steps of a uniform $10 \times 1$ grid.
As a by-product we obtain a grid hierarchy suitable for our multigrid solver.
To discretize the surface water model, we use the dual grid $\mathcal{T}_j^{\text{{in}},*}$ of the trace
of the subsurface grid on $\Sigma_\text{in}$.  The grids are depicted schematically
on the right of Fig.~\ref{fig:numerical_example_schematic}.
For the implementation we used the \textsc{Dune} libraries~\cite{bastian_et_al_II:2008}
together with the domain decomposition module \texttt{dune-grid-glue}~\cite{bastian:2010}.

%
We start with the initial pressure $-2\cdot 10^4$\,Pa in the domain, which, taking the
material parameters and~\eqref{BrooksCoreysat} into account, corresponds to the constant initial
saturation~0.1401.  The initial surface water height is set to a constant 0\,m,
and we prescribe a constant rainfall of $30$\,mm/h $\approx 8.33\cdot 10^{-6}\,\text{m}/\text{s}$ on $[5,10] \times \{1\}$, and zero
elsewhere.  The evolution is simulated up to a final time $T = 350\,000\,\text{s}$, 
which equals roughly four days.  For the time step size $\tau$ we choose 100\,s.
This is well below the upper bound on the time step size implied by the CFL condition
$$
\tau < h\frac{n\mu}{\varrho g \sup_{s\in [0,1]}|k'(s)|}=h\frac{n\mu}{K\varrho g (3+2/\lambda)},
$$
cf.~\cite[p.\;223]{BerningerDiss}, which in our setting evaluates to approximately $\tau < h\cdot1.14\cdot 10^3\,\text{s/m}$ $=200$\,s. 
For each time step we solve the spatial problem~\eqref{Varia3discr} to machine precision by the monotone
multigrid method described in~\cite{BerningerKornhuberSander11}.  We omit the solver convergence
rates and refer the interested reader to \cite{BerningerKornhuberSander11} for robustness and
efficiency studies.  While solving to machine precision usually means more work than necessary,
it eliminates the effect of solver inexactness from the following observations.

\begin{figure}
\begin{center}
\begin{minipage}{0.49\textwidth}
 \includegraphics[width=\textwidth]{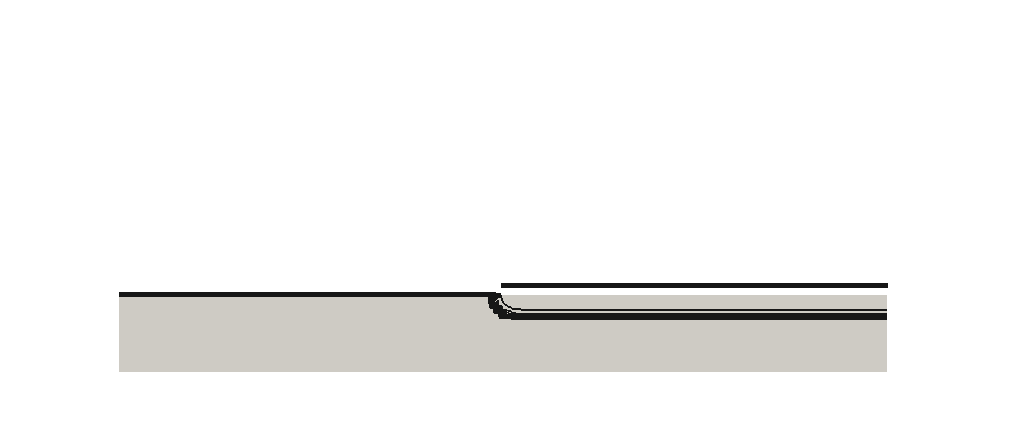}
\vspace{-2em}
 \begin{center}
   $n = 200$
 \end{center}
\end{minipage}
\begin{minipage}{0.49\textwidth}
 \includegraphics[width=\textwidth]{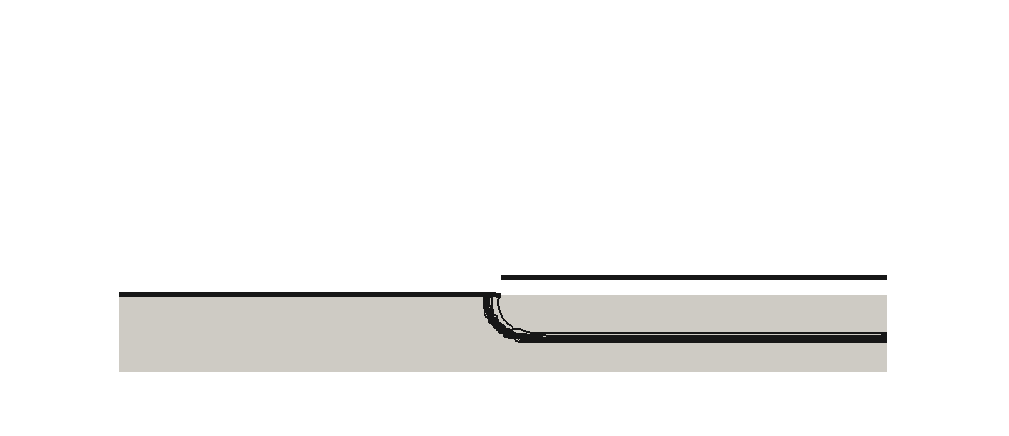}
\vspace{-2em}
 \begin{center}
   $n = 400$
 \end{center}
\end{minipage}

\begin{minipage}{0.49\textwidth}
 \includegraphics[width=\textwidth]{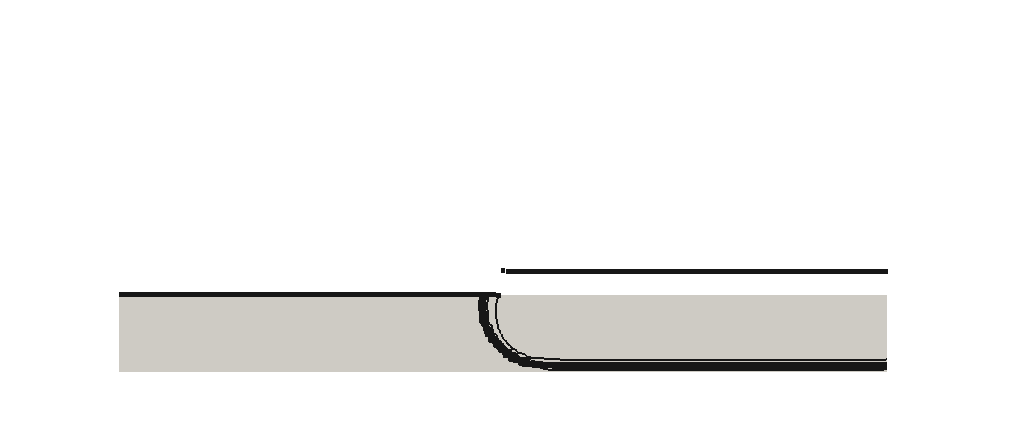}
\vspace{-2em}
 \begin{center}
   $n = 600$
 \end{center}
\end{minipage}
\begin{minipage}{0.49\textwidth}
 \includegraphics[width=\textwidth]{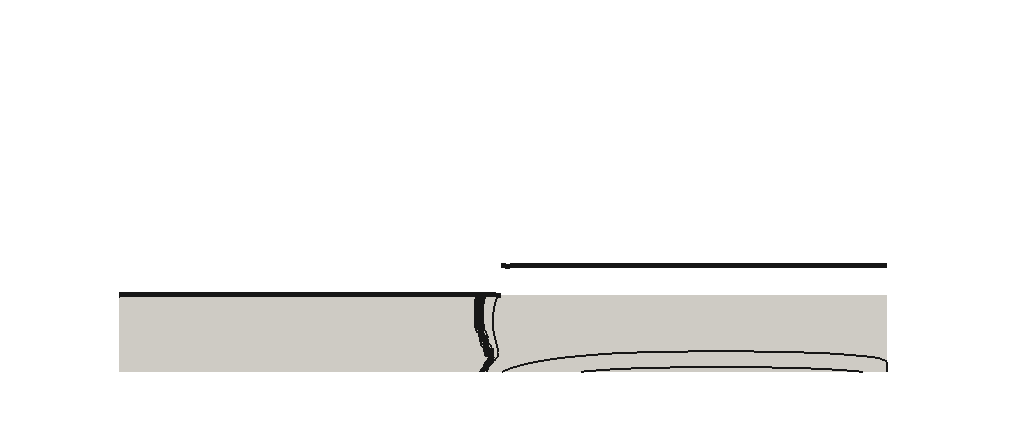}
\vspace{-2em}
 \begin{center}
   $n = 800$
 \end{center}
\end{minipage}

\begin{minipage}{0.49\textwidth}
 \includegraphics[width=\textwidth]{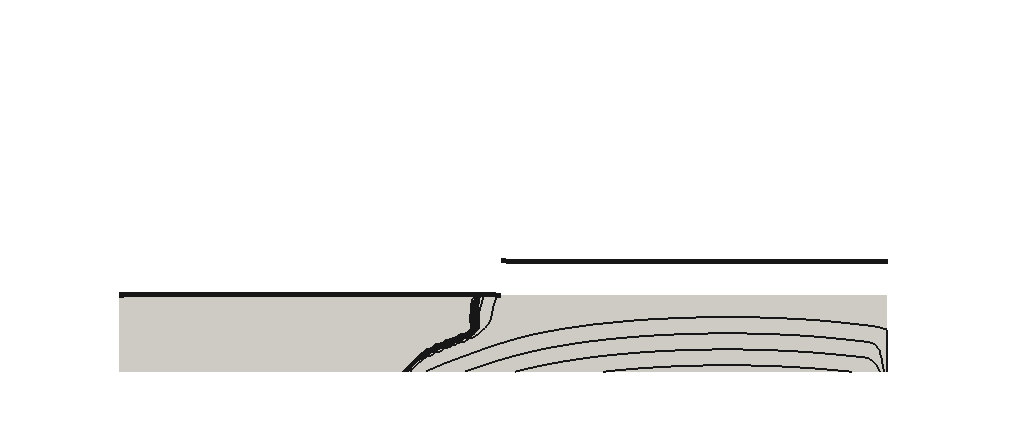}
\vspace{-2em}
 \begin{center}
   $n = 1\,000$
 \end{center}
\end{minipage}
\begin{minipage}{0.49\textwidth}
 \includegraphics[width=\textwidth]{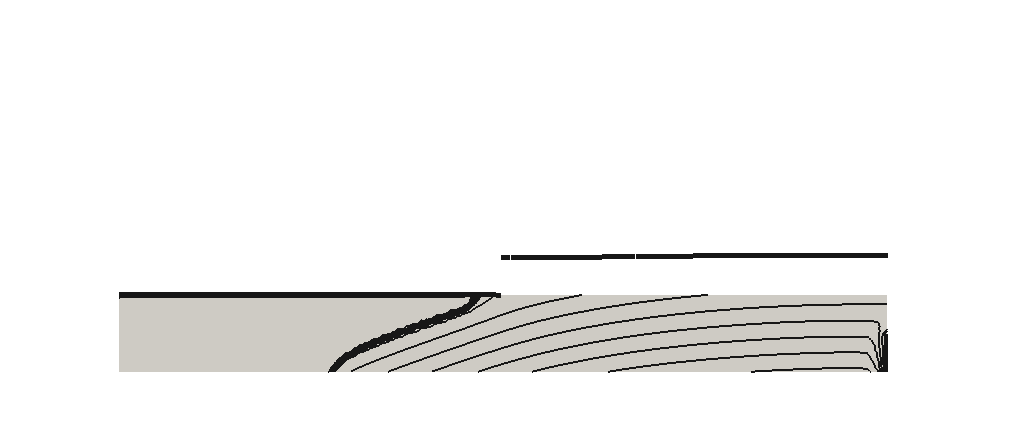}
\vspace{-2em}
 \begin{center}
   $n = 1\,200$
 \end{center}
\end{minipage}

\begin{minipage}{0.49\textwidth}
 \includegraphics[width=\textwidth]{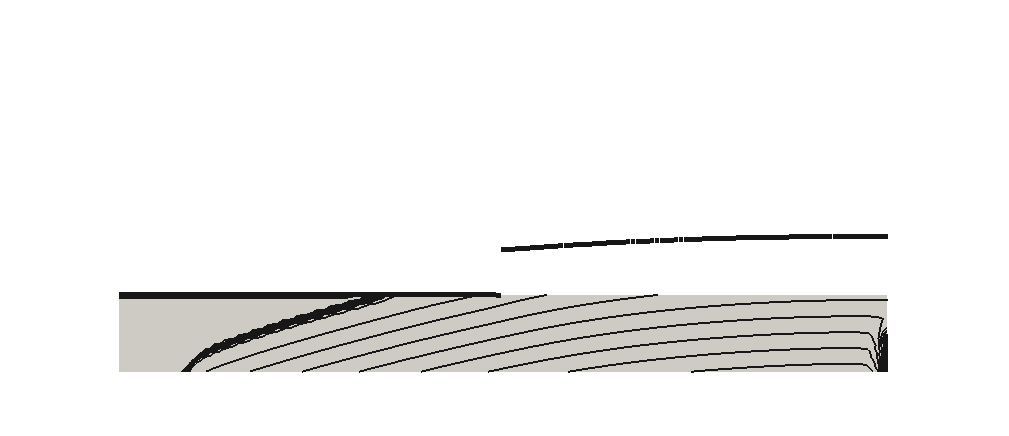}
\vspace{-2em}
 \begin{center}
   $n = 1\,600$
 \end{center}
\end{minipage}
\begin{minipage}{0.49\textwidth}
 \includegraphics[width=\textwidth]{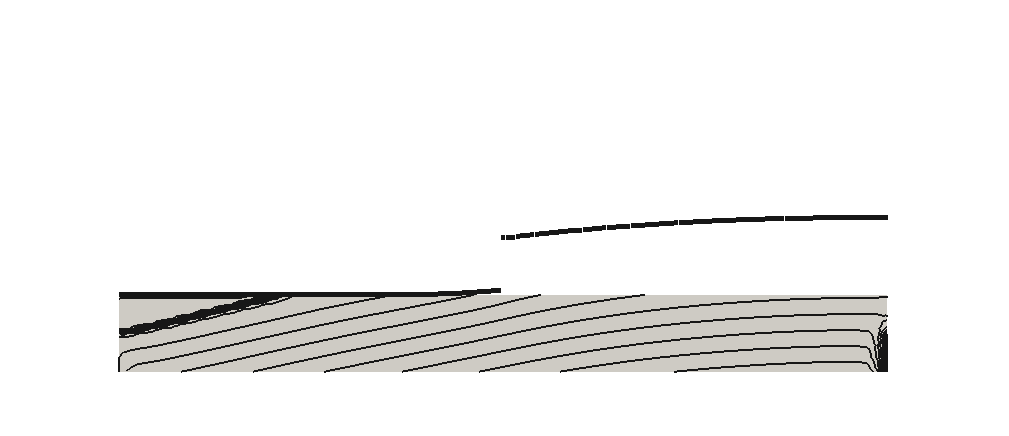}
\vspace{-2em}
 \begin{center}
   $n = 2\,000$
 \end{center}
\end{minipage}

\begin{minipage}{0.49\textwidth}
 \includegraphics[width=\textwidth]{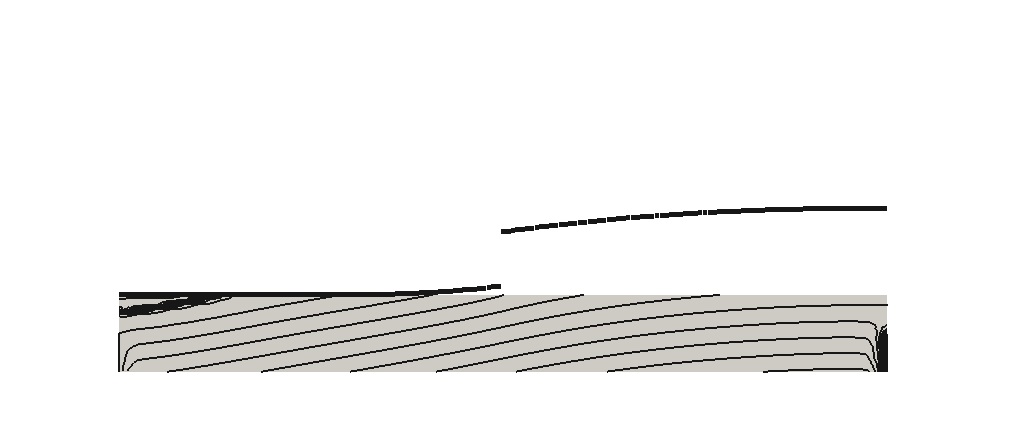}
\vspace{-2em}
 \begin{center}
   $n = 2\,200$
 \end{center}
\end{minipage}
\begin{minipage}{0.49\textwidth}
 \includegraphics[width=\textwidth]{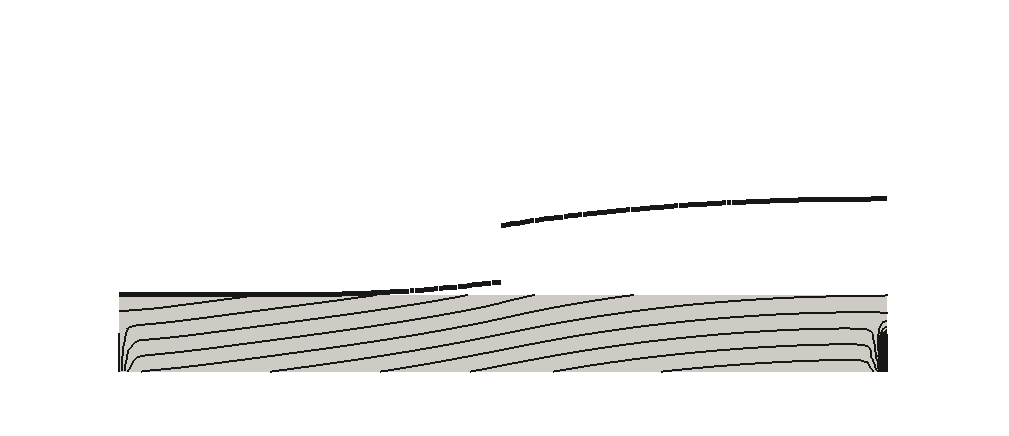}
\vspace{-2em}
 \begin{center}
   $n = 2\,400$
 \end{center}
\end{minipage}

\begin{minipage}{0.49\textwidth}
 \includegraphics[width=\textwidth]{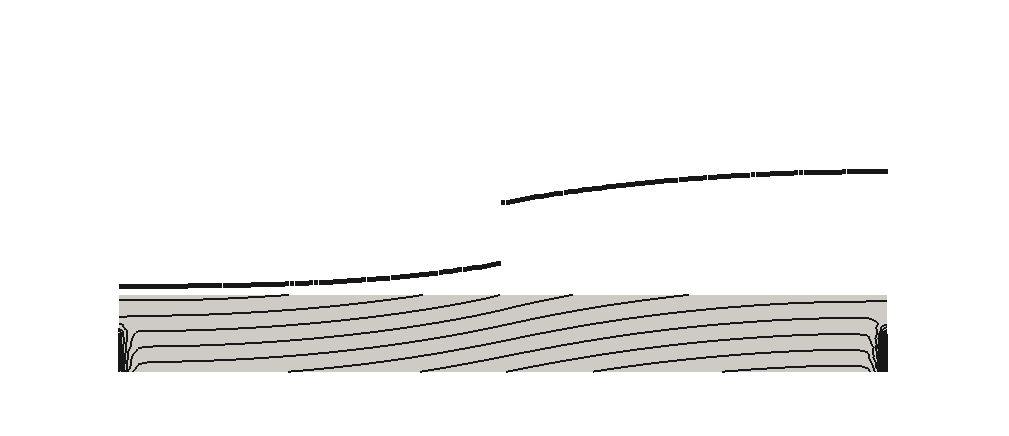}
\vspace{-2em}
 \begin{center}
   $n = 3\,000$
 \end{center}
\end{minipage}
\begin{minipage}{0.49\textwidth}
 \includegraphics[width=\textwidth]{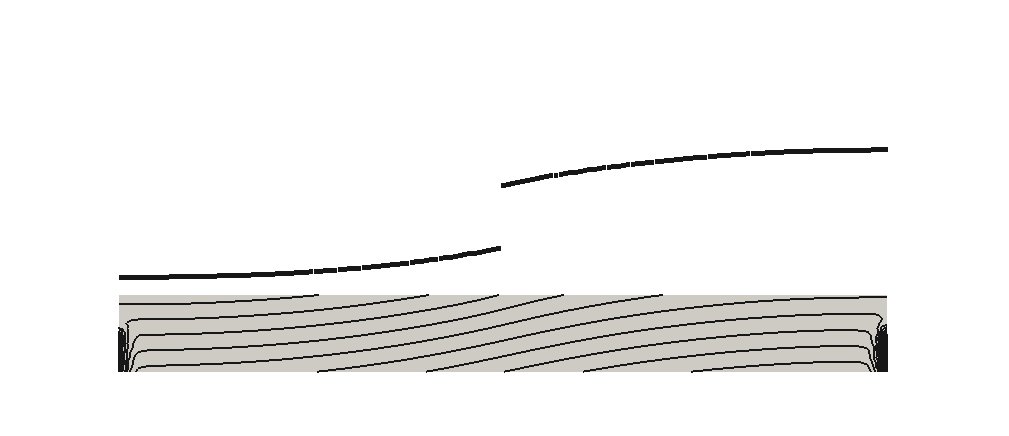}
\vspace{-2em}
 \begin{center}
   $n = 3\,500$
 \end{center}
\end{minipage}
\end{center}
 \caption{Snapshots of the numerical evolution at time steps $t_n=n\cdot\tau=n\cdot 100\,s$ as indicated, with the top black line giving the
          surface water height, and isolines of the pressure $p$
          at integral multiples \mbox{of~2\,kPa.}}
 \label{fig:solution_snapshots}
\end{figure}

Fig.~\ref{fig:solution_snapshots} shows a few snapshots from the evolution.  Initially, surface water
accumulates on the right, where there is rainfall, and infiltrates into the soil. A saturation front traverses the domain. 
When the right part of $\Omega$
is almost filled, a hydrostatic pressure distribution starts to build up there,
and the water starts to infiltrate the left part of $\Omega$.
Around time step 2\,429, the ground is fully saturated.
Most of the water entering the domain leaves it through $\Sigma_\text{out}$, while a small
part exfiltrates on the left part of $\Sigma_\text{in}$, and leads to a rising
surface water level there.  At the last time step, the subsurface pressure field is
contained in the interval $[0\,\text{Pa}, 2.53\cdot 10^4\,\text{Pa}]$, and the surface water height is contained in $[0.23\,\text{m}, 1.89\,\text{m}]$.

\bigskip

\begin{figure}
\begin{center}
 \includegraphics[width=0.7\textwidth]{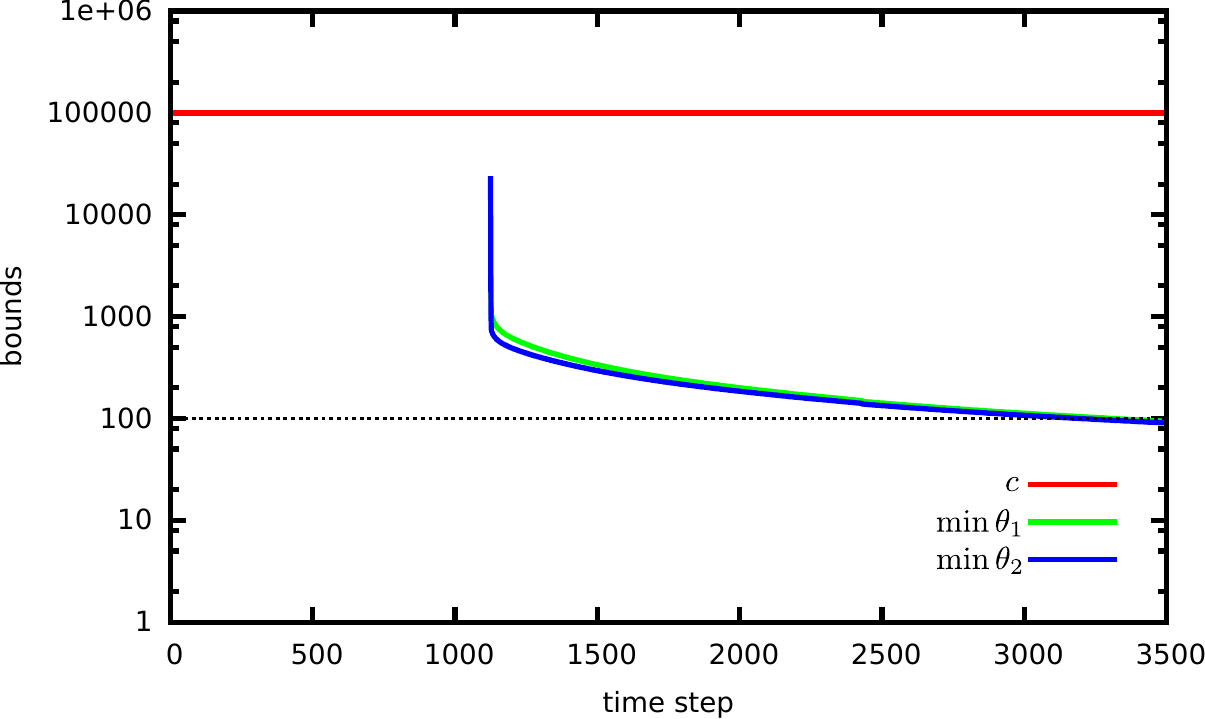}
\end{center}
 \caption{The three terms of the time step restriction, plotted as functions over the time step number; dotted line: used time step size $\tau=100\,s$.}
 \label{fig:step-size-restriction}
\end{figure}

In a second step, we try to numerically assess the time step bound~\eqref{estt}.
The proof of Theorem~\ref{existence_reg}
shows that a nonnegative surface water height $w^n$ at time step $n$ implies
a nonnegative surface water height $w^{n+1}$ at time step $t_{n+1}$ if the time step
size $\tau = t_{n+1} - t_n$ is bounded by \eqref{estt}, which states
\begin{equation}
\label{eq:numerics_time_step_size_restriction}
\tau
 \leq
\min \{ c, \theta_1(x), \theta_2(x) \}
\qquad
\text{for almost all $x \in \Sigma_\text{in}$},
\end{equation}
where 
\begin{align*}
 \theta_1(x)
 & =
 \begin{cases}
  \frac{c \sigma}{\sigma - cr(x,t_n) + h(u^n(x))}
  & \text{if this term is nonnegative}, \\
   \infty & \text{else}
 \end{cases} \\
\intertext{and}
 \theta_2(x)
 & =
 \begin{cases}
  \frac{c \sigma}{\sigma + h(u^n(x))} & \text{if this term is nonnegative}, \\
   \infty & \text{else.}
 \end{cases}
\end{align*}
Note that by construction, the minimum in \eqref{eq:numerics_time_step_size_restriction}
is always a finite nonnegative number.  

In Fig.~\ref{fig:step-size-restriction} we have plotted $c$,
$\min_{x \in \Sigma_\text{in}} \theta_1(x)$ and $\min_{x \in \Sigma_\text{in}} \theta_2(x)$, with $u^n$ replaced by $u^n_j$ for $j=4$, 
as functions of the time step number~$n$.  We observe that our time step is below the
bound~\eqref{estt} for almost all time steps.  In accordance with the continuous theory
we observe nonnegativity of the solution everywhere, with one notable exception.  A surface water element
with initial value $w_p =0$, but next to an element $q$ with $w_q > 0$ will drop below zero
if the subsurface pressure is negative there.  This is a discretization effect.
Unlike in the continuous case, the surface water levels $w_p$ and $w_q$ couple
through a subsurface basis function.  Since $w_q > 0$ and the subsurface pressure is negative,
there will be a flux into the subsurface which will in turn suck water out of the element $p$,
making $w_p$ negative.  Through this effect, negative surface water heights down to $-0.013$\,m
where produced in our example.

It is plausible to try to use the bound~\eqref{eq:numerics_time_step_size_restriction}, with $u^n$ replaced by $u^n_j$,  as
a time step control mechanism.
To check whether this bound is sharp, we have recomputed our example with the time step sizes 
$\tau = 50\,\text{s}$, $100\,\text{s}$, $200\,\text{s}$, $400\,\text{s}$, $800\,\text{s}$,
$1\,600\,\text{s}$, $3\,200\,\text{s}$.  Only at the last value of $\tau$ we do observe
instabilities presumably caused by violations of the CFL condition.  As it turns out,
the graphs in Fig.~\ref{fig:step-size-restriction} are virtually independent of the
time step size.  Also, we do not see negative water levels (except for the discretization
effect described above) for any of the time step sizes considered.
Hence we conclude that the sufficient condition
\eqref{eq:numerics_time_step_size_restriction} in the spatially continuous case is generally to tight when applied to fully discretized problems.

\subsection*{Acknowledgments} We thank Ben Schweizer for fruitful discussions during this work. 

\bibliographystyle{plainnat}

\begin{thebibliography}{42}
\providecommand{\natexlab}[1]{#1}
\providecommand{\url}[1]{\texttt{#1}}
\expandafter\ifx\csname urlstyle\endcsname\relax
  \providecommand{\doi}[1]{doi: #1}\else
  \providecommand{\doi}{doi: \begingroup \urlstyle{rm}\Url}\fi

\bibitem[Alt and Luckhaus(1983)]{AltLuckhaus83}
H.W.\ Alt and S.\ Luckhaus.
\newblock Quasilinear elliptic--parabolic differential equations.
\newblock \emph{Math. Z.}, 183:\penalty0 311--341, 1983.

\bibitem[Alt et~al.(1984)Alt, Luckhaus, and Visintin]{AltLuckhausVisintin84}
H.W.\ Alt, S.\ Luckhaus, and A.\ Visintin.
\newblock On nonstationary flow through porous media.
\newblock \emph{Ann. Math. Pura Appl.}, 136:\penalty0 303--316, 1984.

\bibitem[Arbogast et~al.(1996)Arbogast, Wheeler, and
  Zhang]{ArbogastWheelerZhang96}
T.\ Arbogast, M.\ Wheeler, and N.-Y.\ Zhang.
\newblock A nonlinear mixed finite element method for a degenerate parabolic
  equation arising in flow in porous media.
\newblock \emph{SIAM J. Numer. Anal.}, 33:\penalty0 1669--1687, 1996.

\bibitem[Bardos et~al.(1979)Bardos, le~Roux, and
  N{\'e}d{\'e}lec]{BardosLeRouxNedelec}
C.~Bardos, A.~Y. le~Roux, and J.-C. N{\'e}d{\'e}lec.
\newblock First order quasilinear equations with boundary conditions.
\newblock \emph{Comm. Partial Differential Equations}, 4\penalty0 (9):\penalty0
  1017--1034, 1979.

\bibitem[Bastian et~al.(2005)Bastian, Ippisch, Rezanezhad, Vogel, and
  Roth]{Bastianuac05}
P.\ Bastian, O.\ Ippisch, F.\ Rezanezhad, H.J.\ Vogel, and K.\ Roth.
\newblock Numerical simulation and experimental studies of unsaturated water
  flow in heterogeneous systems.
\newblock In W.\ J\"ager, R.\ Rannacher, and J.\ Warnatz, editors,
  \emph{Reactive Flows, Diffusion and Transport}, pages 579--598. Springer,
  2005.

\bibitem[Bastian et~al.(2008)Bastian, Blatt, Dedner, Engwer, Kl\"ofkorn,
  Kornhuber, Ohlberger, and Sander]{bastian_et_al_II:2008}
P.~Bastian, M.~Blatt, A.~Dedner, C.~Engwer, R.~Kl\"ofkorn, R.~Kornhuber,
  M.~Ohlberger, and O.~Sander.
\newblock A generic interface for parallel and adaptive scientific computing.
  {Part~II}: Implementation and tests in {DUNE}.
\newblock \emph{Computing}, 82\penalty0 (2--3):\penalty0 121--138, 2008.

\bibitem[Bastian et~al.(2010)Bastian, Buse, and Sander]{bastian:2010}
P.~Bastian, G.~Buse, and O.~Sander.
\newblock Infrastructure for the coupling of {D}une grids.
\newblock In \emph{Proc. of ENUMATH 2009}, pages 107--114. Springer, 2010.

\bibitem[Bastian et~al.(2012)Bastian, Berninger, Dedner, Engwer, Henning,
  Kornhuber, Kr\"{o}ner, Ohlberger, Sander, Schiffler, Shokina, and
  Smetana]{Bastian_et_al:2012}
P.~Bastian, H.~Berninger, A.~Dedner, C.~Engwer, P.~Henning, R.~Kornhuber,
  D.~Kr\"{o}ner, M.~Ohlberger, O.~Sander, G.~Schiffler, N.~Shokina, and
  K.~Smetana.
\newblock Adaptive modelling of coupled hydrological processes with application
  in water management.
\newblock In M.~G\"{u}nther et~al., editor, \emph{Progress in Industrial
  Mathematics at ECMI~2010}, volume~17 of \emph{Mathematics in Industry}, pages
  559--565. Springer, 2012.

\bibitem[Bear(1988)]{Bear88}
J.\ Bear.
\newblock \emph{Dynamics of fluids in porous media}, volume~2.
\newblock Dover Publications, 1988.

\bibitem[Berninger(2007)]{BerningerDiss}
H.\ Berninger.
\newblock \emph{Domain Decomposition Methods for Elliptic Problems with Jumping
  Nonlinearities and Application to the Richards Equation}.
\newblock PhD thesis, Freie Universit\"at Berlin, 2007.

\bibitem[Berninger(2009)]{Berninger09}
H.\ Berninger.
\newblock Non-overlapping domain decomposition for the {R}ichards equation via
  superposition operators.
\newblock In M.\ Bercovier, M.J.\ Gander, R.\ Kornhuber, and O.\ Widlund,
  editors, \emph{Domain Decomposition Methods in Science and Engineering
  XVIII}, volume~70 of \emph{LNCSE}, pages 169--176. Springer, 2009.

\bibitem[Berninger and Sander(2010)]{BerningerSander10}
H.\ Berninger and O.\ Sander.
\newblock Substructuring of a {S}ignorini-type problem and {R}obin's method for
  the {R}ichards equation in heterogeneous soil.
\newblock \emph{Comput.\ Vis.\ Sci.}, 13\penalty0 (5):\penalty0 187--205, 2010.

\bibitem[Berninger et~al.(2011)Berninger, Kornhuber, and
  Sander]{BerningerKornhuberSander11}
H.\ Berninger, R.\ Kornhuber, and O.\ Sander.
\newblock Fast and robust numerical solution of the {R}ichards equation in
  homogeneous soil.
\newblock \emph{SIAM J. Numer. Anal.}, 49\penalty0 (6):\penalty0 2576--2597,
  2011.

\bibitem[Berninger et~al.(accepted)Berninger, Kornhuber, and
  Sander]{BernKornSand11}
H.\ Berninger, R.\ Kornhuber, and O.\ Sander.
\newblock Heteogeneous substructuring methods for coupled surface and
  subsurface flow.
\newblock In \emph{Proceedings of the 20th conference on domain decomposition
  methods}, accepted.

\bibitem[Brezzi and Gilardi(1987)]{BrezziGilardi87}
F.\ Brezzi and G.\ Gilardi.
\newblock Functional spaces.
\newblock In H.\ Kardestuncer and D.H.\ Norrie, editors, \emph{Finite Element
  Handbook}, chapter 2 (part 1), pages 1.29--1.75. Springer, 1987.

\bibitem[Brooks and Corey(1964)]{BrooksCorey64}
R.J.\ Brooks and A.T.\ Corey.
\newblock Hydraulic properties of porous media.
\newblock Technical Report Hydrology Paper No.\ 3, Colorado State University,
  Civil Engineering Department, Fort Collins, 1964.

\bibitem[Burdine(1953)]{Burdine53}
N.T.\ Burdine.
\newblock Relative permeability calculations from pore-size distribution data.
\newblock \emph{Petr.\ Trans., Am.\ Inst.\ Mining Metall.\ Eng.}, 198:\penalty0
  71--77, 1953.

\bibitem[Chen et~al.(1994)Chen, Friedman, and Kimura]{ChenFriedKimu94}
X.\ Chen, A.\ Friedman, and T.\ Kimura.
\newblock Nonstationary filtration in partially saturated porous media.
\newblock \emph{European J. Appl. Math.}, 5\penalty0 (3):\penalty0 405--429,
  1994.

\bibitem[Dautray and Lions(2000)]{DautrayLions2}
R.\ Dautray and J.-L.\ Lions.
\newblock \emph{Mathematical Analysis and Numerical Methods for Science and
  Technology}, volume 2, Functional and Variational Methods.
\newblock Springer, 2000.

\bibitem[Dawson(2006)]{Dawson2006}
C.~Dawson.
\newblock Analysis of discontinuous finite element methods for ground
  water/surface water coupling.
\newblock \emph{SIAM J. Numer. Anal.}, 44\penalty0 (4):\penalty0 1375--1404,
  2006.

\bibitem[Dawson(2008)]{Dawson2008}
C.~Dawson.
\newblock A continuous/discontinuous {G}alerkin framework for modeling coupled
  subsurface and surface water flow.
\newblock \emph{Comput. Geosci}, 12:\penalty0 451--472, 2008.

\bibitem[Eymard et~al.(1999)Eymard, Gutnic, and Hilhorst]{EymardHilhorst99}
R.\ Eymard, M.\ Gutnic, and D.\ Hilhorst.
\newblock The finite volume method for {R}ichards equation.
\newblock \emph{Comput. Geosci.}, 3\penalty0 (3--4):\penalty0 259--294, 1999.

\bibitem[Filo(1996)]{Filo96}
J.\ Filo.
\newblock Finite time of stabilization in the one-dimensional problem of
  non-steady filtration.
\newblock \emph{Math. Methods Appl. Sci.}, 19\penalty0 (7):\penalty0 529--554,
  1996.

\bibitem[Filo and Luckhaus(1999)]{FiloLuck99}
J.\ Filo and S.\ Luckhaus.
\newblock Modelling surface runoff and infiltration of rain by an
  elliptic-parabolic equation coupled with a first-order equation on the
  boundary.
\newblock \emph{Arch. Ration. Mech. Anal.}, 146\penalty0 (2):\penalty0
  157--182, 1999.

\bibitem[Forsyth and Kropinski(1997)]{ForsythKropinski97}
P.A.\ Forsyth and M.C.\ Kropinski.
\newblock Monotonicity considerations for saturated-unsaturated subsurface
  flow.
\newblock \emph{SIAM J. Sci. Comput.}, 18\penalty0 (5):\penalty0 1328--1354,
  1997.

\bibitem[Fuhrmann(1994)]{Fuhrmann94}
J.~Fuhrmann.
\newblock \emph{Zur Verwendung von Mehrgitterverfahren bei der numerischen
  Behandlung elliptischer partieller Differentialgleichungen mit variablen
  Koeffizienten}.
\newblock PhD thesis, TU Chemnitz--Zwickau, 1994.

\bibitem[Fuhrmann and Langmach(2001)]{FuhrmannLangmach01}
J.\ Fuhrmann and H.\ Langmach.
\newblock Stability and existence of solutions of time-implicit finite volume
  schemes for viscous nonlinear conservation laws.
\newblock \emph{Appl. Numer. Math.}, 37\penalty0 (1--2):\penalty0 201--230,
  2001.

\bibitem[Kornhuber(2002)]{Kornhuber02}
R.\ Kornhuber.
\newblock On constrained {N}ewton linearization and multigrid for variational
  inequalities.
\newblock \emph{Numer.\ Math.}, 91:\penalty0 699--721, 2002.

\bibitem[Kru\v{z}kov(1970)]{Kruzkov70}
S.N.\ Kru\v{z}kov.
\newblock First order quasilinear equations in several independent variables.
\newblock \emph{Sb. Math.}, 10:\penalty0 217--243, 1970.

\bibitem[Ladyzenskaja et~al.(1968)Ladyzenskaja, Solonnikov, and
  Ural'ceva]{LadySoloUral00}
O.A.\ Ladyzenskaja, V.A.\ Solonnikov, and N.N.\ Ural'ceva.
\newblock \emph{Linear and quasi-linear equations of parabolic type}.
\newblock Amer. Math. Soc., 1968.

\bibitem[Ohlberger and Schweizer(2007)]{OhlbergerSchweizer07}
M.\ Ohlberger and B.\ Schweizer.
\newblock Modelling of interfaces in unsaturated porous media.
\newblock \emph{Discrete Contin. Dyn. Syst.}, \penalty0 (Dynamical Systems and
  Differential Equations. Proceedings of the 6th AIMS International Conference,
  suppl.):\penalty0 794--803, 2007.

\bibitem[Otto(1996)]{Otto96}
F.\ Otto.
\newblock {$L^1$}-contraction and uniqueness for quasilinear
  elliptic--parabolic equations.
\newblock \emph{J. Differ. Equations}, 131\penalty0 (1):\penalty0 20--38, 1996.

\bibitem[Otto(1997)]{Otto97}
F.\ Otto.
\newblock {$L^1$}-contraction and uniqueness for unstationary
  saturated--unsatu\-rated porous media flow.
\newblock \emph{Adv. Math. Sci. Appl.}, 7\penalty0 (2):\penalty0 537--553,
  1997.

\bibitem[Pop and Schweizer(2011)]{PopSchweizer11}
I.S.\ Pop and B.\ Schweizer.
\newblock Regularization schemes for degenerate {R}ichards equations and
  outflow conditions.
\newblock \emph{Math. Models Methods Appl.}, 21\penalty0 (8):\penalty0
  1685--1712, 2011.

\bibitem[Radu et~al.(2004)Radu, Pop, and Knabner]{KnabnerRaduPop04}
F.\ Radu, I.S.\ Pop, and P.\ Knabner.
\newblock {Order of convergence estimates for an {E}uler implicit, mixed finite
  element discretization of {R}ichards' equation}.
\newblock \emph{SIAM J. Numer. Anal.}, 42\penalty0 (4):\penalty0 1452--1478,
  2004.

\bibitem[Rawls et~al.(1993)Rawls, Ahuja, Brakensiek, and
  Shirmohammadi]{RawlsBrakensiek93}
W.J.\ Rawls, L.R.\ Ahuja, D.L.\ Brakensiek, and A.\ Shirmohammadi.
\newblock Infiltration and soil water movement.
\newblock In D.R.\ Maidment, editor, \emph{Handbook of Hydrology}, chapter~5.
  McGraw--Hill, 1993.

\bibitem[Schweizer(2007)]{Schweizer07}
B.\ Schweizer.
\newblock Regularization of outflow problems in unsaturated porous media with
  dry regions.
\newblock \emph{J. Differential Equations}, 237\penalty0 (2):\penalty0
  278--306, 2007.

\bibitem[Smetana(2008)]{Smetana08}
K.\ Smetana.
\newblock {B}etrachtung von {I}nfiltrationsprozessen und der
  {E}inphasenstr\"{o}mung im unges\"{a}ttigten por\"{o}sen {M}edium.
  {M}odellierung mit {H}ilfe der {R}ichards-{G}leichung gekoppelt mit einer
  gew\"{o}hnlichen {D}ifferentialgleichung und {B}eweis der {E}xistenz von
  {L}\"{o}sungen.
\newblock Master's thesis, Albert-Ludwigs-Universit\"{a}t Freiburg, 2008.

\bibitem[Sochala et~al.(2009)Sochala, Ern, and Piperno]{SochalaErnPiperno2009}
P.~Sochala, A.~Ern, and S.~Piperno.
\newblock Mass conservative {BDF}-discontinuous {G}alerkin/explicit finite
  volume schemes for coupling subsurface and overland flows.
\newblock \emph{Comput. Methods Appl. Mech. Engrg.}, 198:\penalty0 2122--2136,
  2009.

\bibitem[van Genuchten(1980)]{vanGenuchten80}
M.T.\ van Genuchten.
\newblock A closed-form equation for predicting the hydraulic conductivity of
  unsaturated soils.
\newblock \emph{Soil Sci. Soc. Am. J.}, 44:\penalty0 892--898, 1980.

\bibitem[Werner(2005)]{Werner05}
D.\ Werner.
\newblock \emph{Funktionalanalysis}.
\newblock Springer, 5th edition, 2005.

\bibitem[Wiese and N{\"u}tzmann(2009)]{WieseNuetzmann09}
B.\ Wiese and G.\ N{\"u}tzmann.
\newblock Transient leakance and infiltration characteristics during lake bank
  filtration.
\newblock \emph{Ground Water}, 47\penalty0 (1):\penalty0 57--68, 2009.

\end{thebibliography}

\end{document}